\numberwithin{equation}{section}
\newtheorem{theorem}{Theorem}[section]
\newtheorem{definition}[theorem]{Definition}
\newtheorem{proposition}[theorem]{Proposition}
\newtheorem{corollary}[theorem]{Corollary}
\newtheorem{lemma}[theorem]{Lemma}
\theoremstyle{remark}
\newtheorem{rmk}[theorem]{Remark}
\newtheorem{examp}[theorem]{Example}
\newcommand{\RR}{{{\mathbb R}^3}}
\newcommand{\R}{{{\mathbb R}}}
\newcommand{\C}{{{\mathbb C}}}
\newcommand{\N}{{{\mathbb N}}}
\newcommand{\PM}{{{\mathcal{PM}}}}
\renewcommand{\Re}{{{{\rm Re\,}}}}
\renewcommand{\Im}{{{{\rm Im\,}}}}
\newcommand{\vf}{{\varphi}}
\newcommand{\VF}{{\Phi}}
\newcommand{\wvf}{{\widetilde\varphi}}
\newcommand{\wpsi}{{\widetilde\psi}}
\newcommand{\dn}{{\,d\sigma}}
\newcommand{\wa}{{\widetilde{\alpha}}}
\newcommand{\Xa}{{{\mathcal{X}}^\alpha}}
\newcommand{\XaT}{{{\mathcal{X}}^\alpha_T}}
\newcommand{\F}{{{\mathcal F}}}
\newcommand{\G}{{{\mathcal G}}}
\newcommand{\B}{{{\mathcal B}}}
\newcommand{\PDa}{{{\mathcal{K}}^\alpha}}
\newcommand{\g}{{\gamma_2}}
\newcommand{\ga}{{\gamma_\alpha}}
\newcommand{\rf}[1]{(\ref{#1})}
\title[Homogeneous Boltzmann equation]{
Infinite energy solutions  to the homogeneous Boltzmann equation}
\author{Marco Cannone}
\address
{Universit\'e Paris-Est, Laboratoire d'Analyse
  et de Math\'ematiques 
Appliqu\'ees,  UMR 8050 CNRS,
  5 boulevard Descartes, Cit\'e  Descartes Champs-sur-Marne, 
 77454 Marne-la-Vall\'ee cedex 2, France}
\email{marco.cannone@univ-mlv.fr}
\author{Grzegorz Karch}
\address{
 Instytut Matematyczny, Uniwersytet Wroc\l awski,
 pl. Grunwaldzki 2/4, 50-384 Wroc\-\l aw, Poland}
\email{grzegorz.karch@math.uni.wroc.pl}
\urladdr{http://www.math.uni.wroc.pl/~karch}
\thanks{
This work was partially supported by
by the European Commission Marie Curie Host Fellowship for the Transfer of 
Knowledge
``Harmonic Analysis, Nonlinear Analysis and Probability''  MTKD-CT-2004-013389,
and by the Polish Ministry of Science grant N201 022 32/0902.}
\subjclass[2000]{82C40; 76P05}
\keywords{
homogeneous Boltzmann equation, Maxwellian gas,
self-similar solutions, asymptotic stability.}
\date{\bf \today}
\begin{document}

\begin{abstract}
The goal of  this work is to present an approach to
the  homogeneous Boltzmann equation  for  Maxwellian molecules
with a physical  collision kernel
which allows us  to construct unique solutions to the initial value problem
in a space of probability measures defined via the Fourier transform.
In that space,  the second moment of a measure is not assumed to be
finite, so infinite energy solutions are not {\it a priori} excluded
from our considerations.
Moreover, we study the large time asymptotics
of  solutions and, in a particular case, we  give an elementary
proof of the asymptotic stability of
self-similar solutions obtained by
A.V. Bobylev and  C.~Cer\-ci\-gna\-ni
[J. Stat. Phys. {\bf 106} (2002), 1039--1071].\\
\\
To appear in {\it Communications on Pure and Applied Mathematics.}
\end{abstract}

\maketitle



\tableofcontents

\baselineskip=17.5pt


\section{Introduction}

We consider
the homogeneous Boltzmann
equation 
in $\RR$ 
\begin{equation}
\partial_t f(v,t) = Q(f,f)(v,t) \label{eq} 
\end{equation}
with the bilinear form corresponding to a Maxwellian gas
\begin{equation}
Q(g,f)(v)=
\int_{\RR} \int_{S^2}
\B\left(\frac{v-v_*}{|v-v_*|}\cdot \sigma \right)
\big(f(v')g(v'_*)-f(v)g(v_*)\big)
\dn\,dv_*.
\label{B}
\end{equation}
Here,   the unknown density $f=f(v,t)$ is independent of the space
variable, moreover, we denote
\begin{equation}
v'=\frac{v+v_*}{2}+\frac{|v-v_*|}{2}\sigma, \quad
v'_*=\frac{v+v_*}{2}-\frac{|v-v_*|}{2}\sigma
\label{v}
\end{equation}
with $\sigma$ varying in the unit sphere $S^2$.
 Equation \rf{eq}--\rf{B} is supplemented with a nonnegative initial datum 
\begin{equation}\label{ini}
f(v,0)=f_0(v)
\end{equation}
 which is assumed  to be  a density of  a 
probability distribution (or, more generally, a probability measure).

The collision kernel $\B$ in \rf{B} is supposed to be a nonnegative function
and, in the case of Maxwellian molecules, it depends only on the deviation 
angle $\theta$, defined by the equation 
$\cos \theta = \frac{v-v_*}{|v-v_*|}\cdot \sigma$.
It is well-known that  the physical collision  kernel 
 $\B=\B(s)$  has a nonintegrable 
singularity as $s\to 1$ of the form $(1-s)^{-5/4}$ 
(see {\it e.g.}   
\cite[p.~1043]{BC02-self}, \cite[Ch.~1.1]{V} and  references therein). 
By the method developed 
in this work can, we can handle this kind of non-integrability 
as well as other singular kernels 
$\B$,
see Remark \ref{rem:kernel} for more details.

In the study of the Boltzmann equation, it is natural to assume that the 
nonnegative initial datum satisfies
\begin{equation}\label{as:f0}
\int_{\RR}f_0(v)\,dv=1, \quad \int_{\RR}f_0(v)v_i\,dv=0\; (i=1,2,3),\quad 
\int_{\RR}f_0(v)|v|^2\,dv=3,
\end{equation}
because 
these relations are interpreted as the unit mass, the zero mean value, and
the unit temperature of the gas, respectively. 
The existence of a unique solution
of the initial value problem \rf{eq}--\rf{ini} under  assumptions \rf{as:f0}
and for a large class on nonintegrable collision kernels is well-known,
see {\it e.g.} \cite{B2, PT96,V} and the references therein. 
This solution 
satisfies
$f\in C^1([0,\infty), L^1(\RR))$  and 
\begin{equation}\label{as:f1}
\int_{\RR}f(v,t)\,dv=1, \;\; \int_{\RR}f(v,t)v_i\,dv=0\; (i=1,2,3),\;\;
\int_{\RR}f(v,t)|v|^2\,dv=3
\end{equation}
for all $t>0$. For more information about the Boltzmann equation and its 
physical meaning, we refer the reader to the  book by Cercignani \cite{C}
and to the more recent review article by Villani \cite{V}.

In this work, we propose a method of studying properties of solutions to 
problem  \rf{eq}--\rf{ini} under very weak assumptions on the 
collision kernel  in which we do not need to assume that the second
moment of the unknown is finite. Hence, solutions with infinite temperature 
(or infinite energy) are not excluded {\it a priori} from our considerations.
These solutions are important because, as described by 
Bobylev and Cercignani  \cite{BC02-self}, they are connected to the shock-wave 
problem.

We limit ourselves to the study of the homogeneous  
Boltzmann equation for Max\-wel\-lian molecules. In fact, our reasoning will be 
based on the important observation by
 Bobylev \cite{B1,B2} showing that, in this case, 
the bilinear form \rf{B} can be easily studied by the Fourier transform. 
More precisely, denoting
\begin{equation}\label{Fourier}
\vf(\xi,t) \equiv \widehat f(\xi,t)= \int_{\RR} e^{-iv\cdot \xi} f(v,t)\;dv
\end{equation}
Bobylev was able to convert equation \rf{eq} into  the following equation 
for the new unknown $\vf=\vf(\xi,t)$
\begin{equation}
\partial_t\vf(\xi,t) = \int_{S^2} \B
\left(\frac{\xi\cdot \sigma}{|\xi|}\right)
\big(\vf(\xi^+,t)\vf(\xi^-,t)-\vf(\xi,t)\vf(0,t)\big)\dn
\label{Feq}
\end{equation}
where 
\begin{equation}\label{xipm}
\xi^+=\frac{\xi+|\xi|\sigma}{2}, \quad \xi^-=\frac{\xi-|\xi|\sigma}{2} 
\end{equation}
and we recall  that these two vectors $\xi^+$ and $\xi^-$ satisfy the 
well-known
relations
\begin{equation}\label{xipm1}
\xi^++\xi^-=\xi \quad\mbox{and} \quad |\xi^+|^2+|\xi^-|^2=|\xi|^2,
\end{equation}
hence, 
\begin{equation}\label{xipm2}
|\xi^+|^2=|\xi|^2\frac{1+\frac{\xi}{|\xi|}\cdot \sigma}2
\quad \mbox{and}\quad 
|\xi^-|^2=|\xi|^2\frac{1-\frac{\xi}{|\xi|}\cdot \sigma}2.
\end{equation}
We also note that the formula 
for the Fourier transform of the bilinear operator $Q$ 
on the right-hand side of \rf{Feq} is actually 
a particular case of a more general one which does not assume Maxwellian 
collision kernel, see \cite[Appendix]{ADVW} for more details.
 
In the following, we study properties of solutions to 
 equation \rf{Feq} supplemented with an initial datum
\begin{equation}
\vf(\xi, 0)=\vf_0(\xi). \label{Fini}
\end{equation}

All our results on solutions of \rf{eq}--\rf{ini} are formulated  
for the initial value problem 
in the Fourier variables \rf{Feq}--\rf{Fini} in the space of
characteristic functions (see Definition~\ref{def:CF}).


Motivated by a series of papers by  Toscani and coauthors \cite{GTW, CGT, TV}, 
we  study the problem  \rf{Feq}--\rf{Fini} in 
the function space described, in the Fourier variables and 
for suitable values of the parameter 
$\alpha$, by the following  pseudo-norm  
\begin{equation}
\|\vf\|_\alpha\equiv\sup_{\xi\in\RR} \frac{|\vf(\xi)|}{|\xi|^{\alpha}}.
 \label{pseudo}
\end{equation}
For $\alpha=0$ the quantity \rf{pseudo} defines the space ${\PM}$ of pseudo-measures, {\it i.e.}~tempered distributions, whose 
Fourier transforms are  bounded functions.
Moreover, we notice that for positive $\alpha$ the quantity 
$\|\vf\|_\alpha$ 
describes the behavior of $\vf$ at zero ({\it i.e.} the moments of the inverse 
Fourier transform of $\vf$) 
and for $\alpha$ negative $\|\vf\|_\alpha$ characterizes the behavior of 
$\vf$  at infinity ({\it i.e.}  the regularity
of the inverse Fourier transform of $\vf$). 

However, this quantity  is not a norm, in general. 
For example, when   $\alpha> 
0$, the number $\|\vf\|_\alpha$ is finite  if the inverse Fourier 
transform of $\vf$  
has polynomial moments of (high enough) degree  equal to zero. 
On the other 
hand, if $\alpha<0$, than $\|\vf\|_\alpha = 0$ for any tempered distribution 
$\vf$,  whose inverse Fourier transform is a polynomial of  
certain (not too high) degree (see {\it e.g.} \cite[Ch.~4]{FJW}).
It is easy to verify that if we work modulo suitable equivalence classes, than 
$\|\vf\|_\alpha$ is a norm 
and, as it was noticed in  \cite{CP}, this norm corresponds to the generalized 
homogeneous Besov space $\dot B^{-\alpha,\infty}_{\PM}$, 
based on the pseudo-measure space ${\PM}$ of tempered distributions.

For the Navier-Stokes and other
parabolic equations, it is well-known that
the regularity (in space) of the solution plays an important role, so that it is
natural to consider negative values of $\alpha$ in this context. Le Jan and Sznitman \cite{LJS} introduced the
scaling invariant norm $\|\cdot\|_\alpha$ with $\alpha= -2$ for the
Navier-Stokes equations. In \cite{CK04}, following this approach, we obtained the existence and the large time
asymptotic of
{\it infinite energy} solutions to the incompressible Navier- Stokes system in
the space $\dot B^{2,\infty}_{\PM}$.
A similar approach was introduced in \cite{BCKG} for the study of a
model of gravitating particles (see \cite{C04}, for a review).

At variance with the Navier-Stokes system, in the case of the homogeneous  
Boltzmann equation \rf{eq}, 
the space integrability of a solution plays a pivotal role. 
It means that we should take into account
the behavior of 
the Fourier transform  of a solution as $|\xi|\to 0$ and not when  $|\xi|\to 
\infty$. In other words, if it is natural to take negative values of $\alpha$ 
for the Navier-Stokes equations and other parabolic systems,  positive values of 
$\alpha$ should 
be considered for the Boltzmann equation.

In this direction, for $\alpha\geq 2 $, 
Toscani and coauthors \cite{GTW, CGT, TV} 
were able to obtain several nice results for the homogeneous Boltzmann equation  
(see Villani \cite{V}, for a review).  For example,  in the case of $\alpha=2$, 
Toscani and Villani \cite{TV}  proved 
the uniqueness and the stability 
of solutions to the homogeneous Boltzmann equation
for Maxwellian gas with a physical nonintegrable collision kernel.
Their proof required the energy of the solution 
({\it i.e.}~the last equality in \rf{as:f1}) to be finite.

In this work, we  treat the case  $0\leq \alpha\leq 2$
and we study the initial value problem \rf{Feq}--\rf{Fini}
 in a larger space 
where infinite energy
solutions are not excluded {\it a priori}. In this setting, 
our norms  growth exponentially in time 
and the 
trend to equilibrium  will be described in self-similar variables. We do not
impose the Grad cut-off assumption : 
any collision kernel satisfying 
$(1-s^2)^{\alpha_0/4}\B(s) \in L^1(-1,1)$ for some $\alpha_0\in [0,2]$ will be 
included in our 
approach  
(see Remark \ref{rem:kernel} for more details).


\section{Main results}
We begin by defining the function set which plays the main role in 
our study of properties of solution to the initial value problem
\rf{Feq}--\rf{Fini}.
First, recall that any solution $f=f(\cdot, t)$ 
of the homogeneous Boltzmann equation \rf{eq}--\rf{B} 
is (after the well-known normalization)
a probability 
measure for every $t\geq 0$.
Following the probabilistic terminology, we are going to use the set 
$\mathcal{K}$ of ``{\it characteristic functions}'', 
{\it i.e.} those functions that are  Fourier transforms
of probability measures ({\it cf.}~Definition \ref{def:CF}).
 In the next section, we 
will also introduce a  
more general set consisting of  ``{\it positive definite functions}" 
({\it cf.}~Definition \ref{def:pos:def}).
The  Bochner theorem (see Theorem \ref{thm:Bochner}) 
 ensures that the set of {\it characteristic functions} coincides with the 
set of 
{\it positive definite  functions} that are {\it  continuous.}

The main interest in working with this more general framework of functions is 
that we can easily derive a
nice estimate of the quantity $\vf(\xi^+)\vf(\xi^-)-\vf(\xi)$  
(see inequality (\ref{ineq:2})) 
that will be useful in the study of the collision term 
$Q$ with a non-integrable collision kernel.

Inspired by the papers of Toscani and his coauthors, we  introduce 
for each $\alpha\in [0,2]$ the  space 
\begin{equation}\label{PDa}
\PDa=\Big\{ \vf:\mathbb{R}^3\to \mathbb{C}\ \mbox{is a characteristic function 
such that} \;\;\|\vf-1\|_\alpha< \infty\Big\},
\end{equation}
where 
\begin{equation}\label{norm}
\|\vf-1\|_\alpha\equiv 
\sup_{\xi\in\RR} \frac{|\vf(\xi)-1|}{|\xi|^{\alpha}}.
\end{equation}
 The set $\PDa$ endowed with the distance
\begin{equation}\label{distance}
\|\vf-\widetilde\vf\|_\alpha 
\equiv \sup_{\xi\in\RR} \frac{|\vf(\xi)-\widetilde\vf(\xi)|}{|\xi|^\alpha}.
\end{equation}
is a complete metric space (see Proposition \ref{prop:PDa}, below).

The definition of $\PDa$ makes 
sense also for $\alpha>2$,
however, as we will see later, $\PDa=\{1\}$ in this case. 
In fact, in order to have a non trivial  function space in the case  $\alpha>2$, 
higher order 
moments should be considered and a suitable Taylor 
polynomial should be subtracted from $\vf$ in the definition of the space 
given by  eq. (\ref{PDa}) as it was done in \cite{CGT} 
 (see also \cite {V}).
On the other hand, $\mathcal{K}^0$ coincides with the set of all characteristic 
functions
and  the following imbeddings  hold true 
\begin{equation}\label{imb}
\{1\}\subseteq \mathcal{K}^{\alpha}\subseteq
\mathcal{K}^{\alpha_0}\subseteq \mathcal{K}^{0}
\quad \mbox{for all}\quad  
2\geq \alpha\geq \alpha_0\geq 0,
\end{equation}
see Lemma \ref{lem:imb} for the proofs of all these properties.

Let us also emphasize that the Fourier transform 
of any probability measure with the finite moment of order $\alpha$ 
belongs to $ \mathcal{K}^{\alpha}$.
This important feature, proved below in  
 Lemma \ref{lem:moment},
allows us to transfer the properties stated in  eq.~(\ref {as:f0}) 
for the function $f_0$ into properties 
to be verified by the new variable $ \vf_0$, justifying in this way the choice 
of the functional setting  $\PDa$. We show, however, that the set $\PDa$ is, 
in fact,
bigger than the set of the Fourier transforms 
of  probability measures with the finite moment of order $\alpha$, 
see Remark \ref{rem:counter}.

In the next section, we  present examples of functions from $\PDa$ as well 
as
some fundamental properties of the metric space $\PDa$.

Next, for every $\xi \in \RR\setminus \{0\}$, we define 
 the  quantity which appears systematically in our considerations:
\begin{equation}
\lambda_\alpha \equiv
 \int_{S^{2}} \B\left(\frac{\xi\cdot \sigma}{|\xi|}\right) 
\left( \frac{|\xi^-|^\alpha +|\xi^+|^\alpha}{|\xi|^\alpha }-1\right)\dn.
\label{la0}
\end{equation}
Note that, in view of relations \rf{xipm1}, we have $\lambda_2=0$.
In Corollary \ref{cor:la}, we prove that $\lambda_\alpha$
is finite, independent of $\xi$, and positive for $0<\alpha<2$,
under the assumption
$
(1-s)^{\alpha/2}(1+s)^{\alpha/2} \B(s)\in L^1(-1,1).
$
~However, to construct solutions to the initial-value problem \rf{Feq}--
\rf{Fini},
we have to impose the stronger assumption on the collision kernel, namely,
\begin{equation}\label{non-cut}
(1-s)^{\alpha_0/4}(1+s)^{\alpha_0/4} \B(s)\in L^1(-1,1) 
\quad\mbox{for some}\quad \alpha_0\in [0,2]. 
\end{equation}

\begin{rmk}\label{rem:kernel}
We have already mentioned in the introduction that 
the physical collision kernel $\B=\B(s)$ behaves 
at $s=1 $ as the function $(1-s)^{-5/4}$. 
Hence, the  assumption \rf{non-cut} holds true 
for this kind of singularity
if $-5/4+\alpha_0/4>-1$, that is for $\alpha_0>1$. 
More generally, as emphasized {\it e.g.}~in \cite{Ukai}
and in 
\cite[Ch.~1.1]{V}, 
there are important collision kernels
in physics and in modeling with the behavior $\B(s)\sim (1-s)^{-1-\nu}$
as $s\to 1$ for some $\nu>0$. We can  deal with this kind of singularity 
if $\nu<1/2$ provided $\alpha_0>4\nu$.
\end{rmk}

We are now in a position to state our main
 result  the existence of solutions to 
the initial value problem
\rf{Feq}--\rf{Fini}.

\begin{theorem}[Existence and uniqueness of solutions]\label{thm:exist}
Assume that $\B$ satisfies  assu\-mp\-tion
\rf{non-cut} for some $\alpha_0\in [0,2]$. 
Then  for each 
$\alpha \in [\alpha_0,2]$ and every $\vf_0\in \PDa$
 there exists 
 a  classical solution $\vf\in C([0,\infty), \PDa)$
of problem
\rf{Feq}--\rf{Fini}. The solution 
is unique in the space 
$C([0,\infty), \mathcal{K}^{\alpha_0})$.
\end{theorem}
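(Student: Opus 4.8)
The plan is to solve the equivalent integral (mild) formulation
\begin{equation*}
\vf(\xi,t)=\vf_0(\xi)+\int_0^t\int_{S^2}\B\left(\frac{\xi\cdot\sigma}{|\xi|}\right)
\big(\vf(\xi^+,\tau)\vf(\xi^-,\tau)-\vf(\xi,\tau)\big)\dn\,d\tau
\end{equation*}
by a contraction argument on a short time interval, and then to continue the solution to all $t\ge0$ with the help of an a priori bound on the pseudonorm. Here we have already used that $\vf(0,\tau)=1$ for a characteristic function, a property that will be preserved by the construction, so that the loss term reduces to $-\vf(\xi,\tau)$ inside the integral. Write $\mathcal{C}(\vf)$ for the collision operator --- the inner $\sigma$-integral above, acting on a single function $\vf\colon\RR\to\mathbb{C}$ --- so that the iteration map sends a path $\vf$ to the path $\vf_0+\int_0^t\mathcal{C}(\vf(\cdot,\tau))\,d\tau$. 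Since $\vf_0\in\mathcal{K}^\alpha\subseteq\mathcal{K}^{\alpha_0}$ by the imbeddings of Lemma~\ref{lem:imb}, I would run this map in the complete metric space $C([0,T],\mathcal{K}^{\alpha_0})$ equipped with $\sup_{0\le t\le T}\|\vf(\cdot,t)-\wvf(\cdot,t)\|_{\alpha_0}$, completeness coming from Proposition~\ref{prop:PDa}.

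The engine of the argument is a pair of estimates for $\mathcal{C}$ in the $\|\cdot\|_{\alpha_0}$ pseudonorm. Splitting $\vf(\xi^+)\vf(\xi^-)-\vf(\xi)=\vf(\xi^+)\big(\vf(\xi^-)-1\big)+\big(\vf(\xi^+)-\vf(\xi)\big)$, using $|\vf|\le1$, the defining bound $|\vf(\eta)-1|\le\|\vf-1\|_{\alpha_0}|\eta|^{\alpha_0}$, and above all the key inequality (\ref{ineq:2}) --- which controls $\vf(\xi^+)\vf(\xi^-)-\vf(\xi)$ near grazing collisions and rests on the characteristic-function inequalities $1-|\vf(\eta)|\le1-\Re\vf(\eta)\le|1-\vf(\eta)|$ together with a Cauchy--Schwarz argument --- one majorises $|\mathcal{C}(\vf)(\xi)|$ and, bilinearly, $|\mathcal{C}(\vf)(\xi)-\mathcal{C}(\wvf)(\xi)|$ by $|\xi|^{\alpha_0}$ times an integral over $S^2$ of $\B\left(\frac{\xi\cdot\sigma}{|\xi|}\right)$ against a weight that vanishes as $\sigma\to\xi/|\xi|$; assumption (\ref{non-cut}) is precisely what makes that integral finite, exactly as in Corollary~\ref{cor:la}. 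This yields
\begin{equation*}
\|\mathcal{C}(\vf)-\mathcal{C}(\wvf)\|_{\alpha_0}\le K\big(1+\|\vf-1\|_{\alpha_0}+\|\wvf-1\|_{\alpha_0}\big)\,\|\vf-\wvf\|_{\alpha_0},
\end{equation*}
hence, taking $\wvf\equiv1$, also $\|\mathcal{C}(\vf)\|_{\alpha_0}\le K(1+\|\vf-1\|_{\alpha_0})\|\vf-1\|_{\alpha_0}$; moreover, replacing $\alpha_0$ by $\alpha$ throughout and invoking again Lemma~\ref{lem:imb}, one sees that the iteration map carries $\mathcal{K}^\alpha$-valued paths into $\mathcal{K}^\alpha$-valued paths. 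Two further points must be checked here: that $\mathcal{C}(\vf(\cdot,\tau))$ is the pointwise $t$-derivative of $\vf_0+\int_0^t\mathcal{C}(\vf(\cdot,\tau))\,d\tau$, so that a mild solution is automatically classical --- this follows from continuity in $\tau$ and dominated convergence with the $\B$-integrable majorants just produced --- and that the map preserves the set of characteristic functions, which is inherited from the Wild-type cut-off approximations, or checked directly from the fact that the right-hand side of (\ref{Feq}) preserves continuity and positive definiteness, together with the Bochner theorem (Theorem~\ref{thm:Bochner}).

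Granting these estimates, the contraction principle on a ball $\{\sup_t\|\vf(\cdot,t)-1\|_{\alpha_0}\le2\|\vf_0-1\|_{\alpha_0}\}$ with $T=T(\|\vf_0-1\|_{\alpha_0})$ small enough gives a unique local solution in $C([0,T],\mathcal{K}^{\alpha_0})$, which lies in $C([0,T],\mathcal{K}^\alpha)$ when $\vf_0\in\mathcal{K}^\alpha$. To reach $[0,\infty)$ I would apply the same estimates to the equation itself, obtaining a differential inequality whose right-hand side is controlled by the pseudonorm (with rate governed by the quantity $\lambda_\alpha$ of (\ref{la0})), so that $\|\vf(\cdot,t)-1\|_{\alpha}$ grows at most exponentially and in particular stays finite on every bounded interval; the length of the local existence interval can therefore be bounded below on compact time sets, and the solution extends to all $t\ge0$. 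Uniqueness in $C([0,\infty),\mathcal{K}^{\alpha_0})$ is then either the local uniqueness propagated along $[0,\infty)$, or, more directly, a Gronwall estimate for $\|\vf(\cdot,t)-\wvf(\cdot,t)\|_{\alpha_0}$ between two solutions with the same datum, using the bilinear bound above with $\|\vf(\cdot,t)-1\|_{\alpha_0}$ and $\|\wvf(\cdot,t)-1\|_{\alpha_0}$ finite and locally bounded.

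The step I expect to be the real obstacle is the bilinear estimate for $\mathcal{C}$ in the presence of the nonintegrable kernel: everything hinges on making (\ref{ineq:2}) quantitative enough that $\int_{S^2}\B\left(\frac{\xi\cdot\sigma}{|\xi|}\right)\big|\vf(\xi^+)\vf(\xi^-)-\vf(\xi)-\wvf(\xi^+)\wvf(\xi^-)+\wvf(\xi)\big|\dn$ is bounded by $|\xi|^{\alpha_0}\|\vf-\wvf\|_{\alpha_0}$ up to the finite constant furnished by (\ref{non-cut}). This is exactly where the exponent $\alpha_0/4$ in (\ref{non-cut}), rather than $\alpha_0/2$, becomes necessary, the square root being the price of the Cauchy--Schwarz step that converts $1-\Re\vf(\xi^-)$ into a modulus of continuity for $\vf$. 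The remaining ingredients --- completeness of $\mathcal{K}^{\alpha_0}$ (Proposition~\ref{prop:PDa}), the continuation and Gronwall arguments, and the passage from the mild to the classical formulation --- are then routine.
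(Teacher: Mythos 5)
Your overall architecture --- a direct fixed point for the mild formulation with the full nonintegrable kernel --- is not the paper's route, and the step you yourself flag as ``the real obstacle'' is, I believe, a genuine gap rather than a technicality. Your contraction argument needs a bilinear Lipschitz bound of the form $\|\mathcal{C}(\vf)-\mathcal{C}(\wvf)\|_{\alpha_0}\leq K(1+\cdots)\|\vf-\wvf\|_{\alpha_0}$ for the \emph{combined} gain-minus-loss operator. Write $g=\vf-\wvf$ and decompose
$\vf(\xi^+)\vf(\xi^-)-\vf(\xi)-\wvf(\xi^+)\wvf(\xi^-)+\wvf(\xi)
= g(\xi^+)\big(\vf(\xi^-)-1\big)+\wvf(\xi^+)\,g(\xi^-)+\big(g(\xi^+)-g(\xi)\big)$.
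The first two terms carry weights $(|\xi^+||\xi^-|)^{\alpha_0}$ and $|\xi^-|^{\alpha_0}$ and are integrable against $\B$ under \rf{non-cut}; but the third term is only bounded by $\|g\|_{\alpha_0}(|\xi^+|^{\alpha_0}+|\xi|^{\alpha_0})\approx 2\|g\|_{\alpha_0}|\xi|^{\alpha_0}$, with \emph{no} decay as $\sigma\to\xi/|\xi|$, so the $\sigma$-integral diverges for a nonintegrable kernel. Inequality \rf{ineq:2} cannot rescue this: it is a diagonal estimate for a \emph{single} characteristic function (it controls $\vf(\xi^+)\vf(\xi^-)-\vf(\xi)$ by $\|\vf-1\|_\alpha$, not differences by $\|\vf-\wvf\|_\alpha$), and the modulus of continuity from \rf{ineq:1} applied to $g(\xi^+)-g(\xi)$ yields a bound in $\|\vf-1\|_{\alpha_0}^{1/2}+\|\wvf-1\|_{\alpha_0}^{1/2}$, not in $\|\vf-\wvf\|_{\alpha_0}$. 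The only available cancellation between gain and loss for differences is the one in Lemma \ref{lem:stab}, where the loss term $-\gamma_2 h$ is kept on the left as a dissipative term; that argument requires $\gamma_2<\infty$, i.e.\ the cut-off. A secondary gap: your iteration map $\vf\mapsto\vf_0+\int_0^t\mathcal{C}(\vf)\,d\tau$ does not visibly preserve positive definiteness, because the loss term enters with a negative coefficient; the set $\mathcal{K}^{\alpha_0}$ is therefore not obviously invariant, and without that invariance you cannot even invoke \rf{ineq:1}--\rf{ineq:2} for the iterates.

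The paper's proof avoids both problems by a two-stage argument. Under the cut-off it uses the Duhamel form $\vf_0 e^{-\gamma_2 t}+\int_0^t e^{-\gamma_2(t-\tau)}\G(\vf(\tau))\,d\tau$ with $\G$ the \emph{gain term only}: this is a superposition with positive coefficients of positive definite functions (so $\PDa$ is invariant), and $\G$ \emph{is} Lipschitz with constant $\gamma_\alpha<\infty$ (Lemma \ref{lem:G:est}), giving a contraction on $[0,T]$ with $T$ independent of the datum, hence global solutions. For the nonintegrable kernel it truncates, $\B_n=\min\{\B,n\}$, proves uniform bounds and equicontinuity for the approximate solutions using the single-function estimate \rf{vf:alpha} and the modulus of continuity from \rf{ineq:1} (this is where the exponent $\alpha_0/4$ and the characteristic-function structure are really used), and passes to the limit by Ascoli--Arzel\`a and dominated convergence; stability and uniqueness come from passing to the limit in Lemma \ref{lem:stab}, whose constant $\lambda_{\alpha,n}\leq\lambda_\alpha=\gamma_\alpha-\gamma_2$ stays finite even though $\gamma_2\to\infty$. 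If you want to keep a fixed-point flavour, you must at least insert this approximation--compactness layer; the direct contraction in $C([0,T],\mathcal{K}^{\alpha_0})$ does not close.
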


Notice that, for every initial datum $\vf_0 \in \PDa$ with 
$\alpha\in [\alpha_0,2]$, 
the corresponding solution 
belongs to the space $C([0,\infty), \mathcal{K}^{\alpha_0})$ in view of  
imbedding~\rf{imb}.

\begin{rmk}
Let us first explain that Theorem \ref{thm:exist} generalizes known results on {\it finite energy} solutions
to the initial value problem \rf{eq}--\rf{ini}. Indeed, 
if $\vf_0\in \mathcal{K}^2$ 
is the Fourier transform of the function $f_0$ satisfying \rf{as:f0}, then 
the corresponding solution $\vf=\vf(\xi,t)$ of problem \rf{Feq}--\rf{Fini},
constructed in Theorem \ref{thm:exist}, is the Fourier transform 
of the solution $f=f(v,t)$  to the original initial value problem 
\rf{eq}-\rf{ini} which satisfies the important conservation laws from 
\rf{as:f1}.
To show this {\it persistence property}, it suffices to note that
the existence of the solution to \rf{eq}--\rf{ini} satisfying \rf{as:f1}
is well-known (see {\it e.g.} \cite{PT96} where the same argument is valid
for more general collision kernel satisfying \rf{non-cut}).
By uniqueness,  this solution agrees with our solution constructed
in Theorem \ref{thm:exist} in the space $C([0,\infty), \mathcal{K}^2)$.
\end{rmk}

\begin{rmk}
In Theorem \ref{thm:exist}, we construct a large class of smooth solutions 
(and not only probability measures) 
to the original initial value problem \rf{eq}--\rf{ini}.  
To see it, it suffices to apply  the well-known regularization procedure 
based on the Bobylev identity 
\begin{equation}\label{Bob:id}
Q(g*M,f*M)=Q(g,f)*M,
\end{equation}
where $Q$ is the Boltzmann operator \rf{B} and $M$ denotes the Maxwellian
probability distribution. Identity \rf{Bob:id} results immediately after 
computing the Fourier transform of its both sides and using  the Bobylev form of $\widehat Q$ 
together with the equality
$|\xi^+|^2+|\xi^-|^2=|\xi|^2$. 

Now, let $\widehat M(\xi)=e^{-A|\xi|^2}$ for some $A>0$ and 
$\vf_0=\widehat \mu_0 \in \PDa$ for some probability measure $\mu_0$.
Denote by $\vf=\vf(\xi,t)$ the solution to \rf{Feq}--\rf{Fini} with $\vf_0$ 
as the initial datum. 
By the Bobylev identity \rf{Bob:id} written in the Fourier variables, 
the function $\vf(\xi,t) e^{-A|\xi|^2}$ is 
the solution of 
problem \rf{Feq}--\rf{Fini} corresponding to the initial datum
$\vf_0\widehat M=\widehat{(\mu_0*M)} \in \PDa$.
Computing the inverse Fourier transform of this rapidely decreasing in $\xi$ 
solution to \rf{Feq}--\rf{Fini},
we obtain 
the smooth solution of the original problem
\rf{eq}--\rf{ini} with the initial condition $\mu_0*M$.

In this work, however, we do not address questions on regularity of solutions
to the homogeneous Boltzmann equation for Maxwellian molecules.
We refer the reader to the recent works \cite{DFT, Ukai} and to references 
therein for  proofs of  smoothing properties of {\it finite energy} solutions 
(namely, those satisfying \rf{as:f1})
to 
\rf{eq}--\rf{ini} including the Gevrey smoothing and the Sobolev space 
regularity.
\end{rmk}

Next, we prove  the stability inequality for solutions to problem 
\rf{Feq}--\rf{Fini} which were constructed in Theorem \ref{thm:exist}.

\begin{theorem}[Stability of solutions] \label{thm:stab}
Assume that  
$\B$ satisfies \rf{non-cut} for some $\alpha_0\in [0,2]$.
Let $\alpha \in [\alpha_0,2]$ and consider two solutions
$\vf,\wvf \in C ([0,\infty , \PDa)$ of the problem
\rf{Feq}--\rf{Fini} corresponding to the initial data $\vf_0,\wvf_0\in
\PDa$, respectively.
Then for every $t\geq 0$
\begin{equation}\label{stab:in:0}
\|\vf(t)-\wvf(t)\|_{\alpha}
\leq e^{\lambda_\alpha t} \|\vf_0-\wvf_0\|_{\alpha},
\end{equation}
where the constant $\lambda_\alpha\geq 0$ is defined in \rf{la0}.
\end{theorem}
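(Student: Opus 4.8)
The plan is to estimate the time derivative of the quantity $\|\vf(t)-\wvf(t)\|_\alpha$ directly from the Fourier-variable equation \rf{Feq}. Subtracting the equations satisfied by $\vf$ and $\wvf$, and writing $w(\xi,t)=\vf(\xi,t)-\wvf(\xi,t)$, the nonlinear term on the right-hand side becomes, after the standard bilinearization trick,
\begin{equation*}
\vf(\xi^+)\vf(\xi^-)-\wvf(\xi^+)\wvf(\xi^-)
=w(\xi^+)\vf(\xi^-)+\wvf(\xi^+)w(\xi^-),
\end{equation*}
while the loss term contributes $-w(\xi)\vf(0)-\wvf(\xi)w(0)$; recall that $\vf(0)=\wvf(0)=1$ for characteristic functions, so the last term vanishes and the subtracted loss term is simply $-w(\xi)$. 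First I would fix $\xi\neq0$, divide the resulting identity for $\partial_t w(\xi,t)$ by $|\xi|^\alpha$, and take absolute values inside the $S^2$ integral. Using that $\vf,\wvf$ are characteristic functions, hence $|\vf|\le1$ and $|\wvf|\le1$ pointwise, and the elementary bounds $|w(\xi^\pm,t)|\le \|w(t)\|_\alpha\,|\xi^\pm|^\alpha$, one arrives at
\begin{equation*}
\frac{|w(\xi^+)\vf(\xi^-)+\wvf(\xi^+)w(\xi^-)|}{|\xi|^\alpha}
\le \|w(t)\|_\alpha\,\frac{|\xi^+|^\alpha+|\xi^-|^\alpha}{|\xi|^\alpha},
\end{equation*}
so that the full right-hand side is controlled by $\|w(t)\|_\alpha$ times the integrand defining $\lambda_\alpha$ in \rf{la0}, plus $\|w(t)\|_\alpha$ from the loss term; these combine to give exactly $\lambda_\alpha\|w(t)\|_\alpha$.

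The main technical point — the step I expect to be the genuine obstacle — is turning this pointwise-in-$\xi$ differential inequality into a differential inequality for the supremum $g(t):=\|w(t)\|_\alpha=\sup_{\xi}|w(\xi,t)|/|\xi|^\alpha$. The function $g$ need not be differentiable, and one cannot simply differentiate under the supremum. I would handle this by working with difference quotients: for $h>0$,
\begin{equation*}
\frac{|w(\xi,t+h)|}{|\xi|^\alpha}\le \frac{|w(\xi,t)|}{|\xi|^\alpha}
+\frac1{|\xi|^\alpha}\int_t^{t+h}|\partial_s w(\xi,s)|\,ds
\le g(t)+\int_t^{t+h}\lambda_\alpha\, g(s)\,ds,
\end{equation*}
where the last inequality uses the pointwise bound above uniformly in $\xi$ (this is where finiteness and $\xi$-independence of $\lambda_\alpha$, guaranteed by Corollary \ref{cor:la} under \rf{non-cut}, and continuity of $s\mapsto g(s)$ — which follows from $\vf,\wvf\in C([0,\infty),\PDa)$ — are used). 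Taking the supremum over $\xi$ on the left yields $g(t+h)\le g(t)+\lambda_\alpha\int_t^{t+h}g(s)\,ds$, i.e. the integral inequality $g(t)\le g(0)+\lambda_\alpha\int_0^t g(s)\,ds$.

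Finally, Gr\"onwall's lemma applied to this integral inequality gives $g(t)\le e^{\lambda_\alpha t}g(0)$, which is precisely \rf{stab:in:0}. A small point to verify along the way is the integrability needed to justify $|w(\xi,t+h)-w(\xi,t)|\le\int_t^{t+h}|\partial_s w(\xi,s)|\,ds$ with a bound uniform in $\xi$ after dividing by $|\xi|^\alpha$; this is immediate once one notes that $\partial_s w(\xi,s)/|\xi|^\alpha$ is bounded by $\lambda_\alpha g(s)$ locally uniformly in $s$, since $g$ is continuous on $[0,\infty)$ and $\lambda_\alpha<\infty$. The nonnegativity $\lambda_\alpha\ge0$ is exactly the content of Corollary \ref{cor:la}, so the exponential factor is genuinely a growth bound, consistent with the discussion in the introduction that the $\alpha$-norms grow exponentially in time.
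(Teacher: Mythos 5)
Your overall skeleton (bilinearize the difference, bound by the $\alpha$-pseudo-norm, integrate in time, Gr\"onwall) is the right family of argument, and the difference-quotient worry you flag is not in fact the obstacle. The genuine gap is in the step where you claim the pointwise bound $|\partial_s w(\xi,s)|/|\xi|^\alpha\le\lambda_\alpha\,g(s)$. If you take absolute values of the gain term and of the loss term separately inside the $S^2$ integral, the gain term contributes $\B\,\big(|\xi^+|^\alpha+|\xi^-|^\alpha\big)|\xi|^{-\alpha}\,g(s)$ and the loss term contributes $+\B\,g(s)$, so the two pieces combine to $(\gamma_\alpha+\gamma_2)\,g(s)$, not $(\gamma_\alpha-\gamma_2)\,g(s)=\lambda_\alpha\,g(s)$; your arithmetic silently flips the sign of the loss term. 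A quick sanity check: for $\alpha=2$ one has $\lambda_2=0$, so your claimed bound would force $\partial_t w\equiv 0$, i.e.\ every solution in $\mathcal{K}^2$ would be stationary. To get the sharp constant $\lambda_\alpha$ (which Remark \ref{rem:opt} shows is optimal) you must \emph{not} discard the sign of the loss term: writing $h=w/|\xi|^\alpha$, the correct differential inequality is $|\partial_t h+\g h|\le\ga\,\|w(t)\|_\alpha$, and one multiplies by $e^{\g t}$ before integrating so that the loss term acts as damping; Gr\"onwall then yields $e^{(\ga-\g)t}=e^{\lambda_\alpha t}$. This is exactly what the paper does in Lemma \ref{lem:stab}.

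The second omission is that your argument, even corrected, only makes sense when $\g$ and $\ga$ are separately finite, i.e.\ under the cut-off assumption $\B\in L^1(-1,1)$. Under the theorem's actual hypothesis \rf{non-cut} with $\alpha_0>0$ the kernel may be nonintegrable, in which case $\g=\ga=\infty$ and only the difference $\lambda_\alpha$ is finite; neither the splitting into gain and loss nor your uniform bound on $\partial_s w/|\xi|^\alpha$ is available. The paper handles this by first proving the stability inequality for the truncated kernels $\B_n=\min\{\B,n\}$, observing $\lambda_{\alpha,n}\le\lambda_\alpha$, and then passing to the pointwise limit of the approximate solutions via the compactness argument of Lemma \ref{lem:AA}. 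Your proposal needs this approximation layer (or some substitute for it) to cover the non-cut-off case, which is the case the theorem is really about.
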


The exponential growth in time on the 
right-hand-side of inequality \rf{stab:in:0} is optimal, see Remark
 \ref{rem:opt} below.
We  use this exponential estimate
in our study of the  asymptotic stability of solutions to 
problem \rf{Feq}--\rf{Fini}.

To prove Theorems \ref{thm:exist} and \ref{thm:stab},  
we begin by imposing the cut-off 
assumption on the kernel $\B$, namely, we assume  $\B\in L^1(-1,1)$
(this is the condition \rf{non-cut} with $\alpha_0=0$).
In this particular case, the results on the existence and the uniqueness of 
solutions to \rf{Feq}--\rf{Fini}
are not new. It is
well-known that, for integrable collision kernels,  the solution
of the initial value problem \rf{Feq}--\rf{Fini} has the explicit representation
via the Wild sum (see \rf{w:1}--\rf{w:2}, below) which is convergent under 
relatively weak assumptions imposed on the initial datum $\vf_0$ 
({\it cf.~e.g.}~\cite[Thm.~2.1]{PT96}).
Here, for the completeness of the exposition,
we prove that the Wild series converges in the space 
$C([0,\infty), \PDa)$. Moreover, 
we present another 
construction of solutions to \rf{Feq}--\rf{Fini}
under the cut-off condition imposed on $\B$,
based on the Banach 
contraction principle, see Theorem \ref{thm:exist:cut} in 
Section \ref{sec:exist}.

The proofs of the existence, 
the uniqueness, and the stability  
of solutions to \rf{Feq}--\rf{Fini} in the space 
$C([0,\infty), \PDa)$
in the case of nonintegrable collision kernels $\B$ satisfying 
\rf{non-cut}
are our main contribution to this theory.
We are able to remove the cut-off assumption
and to complete the proofs of  Theorems 
\ref{thm:exist} and~\ref{thm:stab} by using a well-known approximation argument 
combined with suitable (and crucial for our reasoning) estimates
for characteristic functions form the space $\PDa$, see Lemma \ref{lem:in:c}.

Next, we study the large time behavior of solutions 
to the initial value problem 
\rf{Feq}--\rf{Fini}.
Here, the key role is played by self-similar solutions of equation 
\rf{Feq}
constructed by  Bobylev and Cercignani 
\cite{BC02-eternal,BC02-self}
in the following form
\begin{equation}\label{self-sol}
\vf(\xi,t)= \VF(\xi e^{\mu t})\quad\mbox{for some}\quad \mu \in \mathbb{R}.
\end{equation}
Substituting the function $\vf$ from \rf{self-sol}
into equation \rf{Feq}, we obtain the equation
for the profile $\VF$ (here, $\eta=\xi e^{\lambda t}$)
\begin{equation}\label{Feq-self}
\mu \eta \cdot \nabla \VF(\eta) =
\int_{S^2} \B\left(\frac{\eta\cdot \sigma}{|\eta|}\right)
\big(\VF(\eta^+)\VF(\eta^-)-\VF(\eta)\VF(0)\big)\dn,
\end{equation}
where $\eta^+$ and $\eta^-$ are defined analogously as the vectors in 
\rf{xipm}.

Below, in Lemma \ref{lem:lambda-alpha}, we recall an argument which allows
us to calculate the scaling parameter $\mu$. We show that if a radial 
solution  $\VF\in\PDa$ of equation \rf{Feq-self} satisfies
$
\lim_{|\eta|\to 0}\big(\VF(\eta)-1\big)|\eta|^{-\alpha}=K
$ 
for some constant $K\neq 0$ then, necessarily,
\begin{equation}\label{mua0}
\mu=\mu_\alpha=\frac{\lambda_\alpha}\alpha,
\end{equation}
where the constant $\lambda_\alpha$ is defined in \rf{la0}.
This is a well-founded argument because, for every $\alpha\in (0,2)$ and $K\neq 0$,
Bobylev and Cercignani \cite{BC02-self} proved the existence of 
 a solution  $\VF=\VF_{\alpha,K}$  to equation \rf{Feq-self} 
satisfying
\begin{equation}\label{self:prop:0}
\lim_{|\eta|\to 0}\frac{\VF_{\alpha,K}(\eta)-1}{|\eta|^\alpha}=K.
\end{equation}
In  Theorem \ref{thm:BC:self} below, we sketch the Bobylev and Cercignani 
construction. Here, we only
notice  that the constant $K$ in \rf{self:prop:0}
has to be nonpositive 
because every characteristic function $\VF_{\alpha,K}$
satisfies $|\VF_{\alpha,K}(\eta)|\leq 1$ for all $\eta\in\RR$, 
see Remark \ref{rmk:K<0} for more details.

\begin{rmk}\label{rem:self}
If the solution $\vf(\xi,t)$ and the self-similar profile $\VF(\xi)$
from \rf{self-sol}
are the Fourier transforms of functions $f=f(v,t)$ and $F=F(v)$, respectively,
then we obtain the self-similar solution of the Boltzmann equation 
\rf{eq}--\rf{B} in the original variables in the form
$$
f(v,t)=e^{-3\mu t}F(ve^{-\mu t}).
$$ 
Obviously, $\int_\RR f(v,t)\,dv=\int_\RR F(v)\,dv$ for all $t\in \R$. 
This solution, however, 
cannot have  finite energy, because  the condition
$F\in L^1(\RR, |v|^2\,dv)$ leads immediately to the equality
$$
\int_\RR f(v,t)|v|^2\,dv=e^{2\mu t}\int_\RR F(v)|v|^2\,dv
$$  
which contradicts \rf{as:f1} if $\mu \neq 0$, see 
\cite{BC02-eternal, BC02-self} for more detailed discussion.
\end{rmk}

In order to  study the asymptotic stability of the self-similar
solutions $\vf(\xi,t)=\VF_{\alpha,K}(\xi e^{\mu_\alpha t})$ as well as
 the large time behavior of other solutions to 
system \rf{Feq}--\rf{Fini},
it is more convenient to work in self-similar variables.
Hence, given a solution $\vf=\vf(\xi,t)$ to equation \rf{Feq}
we consider the new function
\begin{equation}\label{phi=psi}
\psi(\xi,t)=\vf (\xi e^{-{\mu_\alpha t}} ,t)\quad \mbox{with} \quad \mu_\alpha=\frac{\lambda_\alpha}{\alpha},
\end{equation}
which is the solution of the initial value problem
\begin{align}\label{Feq:psi}
\partial_t\psi+\mu_\alpha \xi\cdot\nabla \psi
&= \int_{S^2} \B\left(\frac{\xi\cdot \sigma}{|\xi|}\right)
\big(\psi(\xi^+,t)\psi(\xi^-,t)-\psi(\xi,t)\psi(0,t)\big)\dn,\\
\psi(\xi, 0)&=\psi_0(\xi)=\vf_0(\xi). \label{Fini:psi}
\end{align}
Note that, in the new variables, the self-similar profiles $\VF_{\alpha,K}$ 
are stationary solutions of equation \rf{Feq:psi}
({\it cf.}~equation \rf{Feq-self}).

Now, we are in a position to state our main result on the large 
time asymptotics
of solutions to \rf{Feq}--\rf{Fini}. 

\begin{theorem}[Large time asymptotics of solutions] \label{thm:asymp}
Assume that the collision kernel $\B$ satisfies the non cut-off condition
\rf{non-cut} for some $\alpha_0 \in (0,2)$.
Let $\alpha \in [\alpha_0,2)$.
Suppose that $\psi_0,\widetilde\psi_0\in \PDa$ satisfy
\begin{equation}\label{psi:0}
\lim_{|\xi|\to 0}\frac{\psi_0(\xi)-\widetilde\psi_0(\xi)}{|\xi|^\alpha} =0.
\end{equation}
Then the corresponding solutions $\psi(\xi,t)$ and $\widetilde \psi(\xi,t)$ of 
the rescaled problem
\rf{Feq:psi}--\rf{Fini:psi} approach each other 
in the following sense
$$
\lim_{t\to\infty}\|\psi(t)-\widetilde\psi(t) \|_\alpha =0.
$$
\end{theorem}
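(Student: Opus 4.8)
The plan is to reduce the asymptotic statement to the stability inequality \rf{stab:in:0} applied in the self-similar variables, exploiting the fact that the rescaled equation \rf{Feq:psi} has the same collision term as \rf{Feq} but with the extra transport term $\mu_\alpha\xi\cdot\nabla\psi$. First I would record how the pseudo-norm $\|\cdot\|_\alpha$ transforms under the scaling $\xi\mapsto\xi e^{-\mu_\alpha t}$: if $\psi(\xi,t)=\vf(\xi e^{-\mu_\alpha t},t)$ then $\|\psi(t)-\wpsi(t)\|_\alpha = e^{\alpha\mu_\alpha t}\|\vf(t)-\wvf(t)\|_\alpha = e^{\lambda_\alpha t}\|\vf(t)-\wvf(t)\|_\alpha$, because $\mu_\alpha=\lambda_\alpha/\alpha$. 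Combined with Theorem \ref{thm:stab}, this shows that $t\mapsto\|\psi(t)-\wpsi(t)\|_\alpha$ is nonincreasing, so the limit $\ell\equiv\lim_{t\to\infty}\|\psi(t)-\wpsi(t)\|_\alpha$ exists and is finite; it remains to prove $\ell=0$.

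The key mechanism for forcing $\ell=0$ is that the hypothesis \rf{psi:0} says the difference $\psi_0-\wpsi_0$ is $o(|\xi|^\alpha)$ near the origin — i.e., it lies in the smaller space where $\|\cdot\|_\alpha$ is controlled by behavior away from $0$ — and this subcriticality should be propagated and then amplified by the decay of the linearized evolution. Concretely, I would write $w(\xi,t)=\psi(\xi,t)-\wpsi(\xi,t)$ and derive, from \rf{Feq:psi}, a Duhamel-type representation along the characteristics $\xi(t)=\xi_0 e^{-\mu_\alpha t}$ of the transport part. Evaluating $w$ along these characteristics turns the $\mu_\alpha\xi\cdot\nabla$ term into a total time derivative, and one is left with an integral equation whose source is the (bilinear-difference) collision term $Q$, which by the fundamental estimate behind inequality \rf{ineq:2} and the computations yielding $\lambda_\alpha$ can be bounded by $\lambda_\alpha$ times a sup over dyadic scales of $|w|/|\xi|^\alpha$. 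The point is that, going back to the unscaled picture, the stability bound \rf{stab:in:0} is an equality only in a limiting/extremal regime; for data satisfying \rf{psi:0} the relevant supremum defining $\|w(t)\|_\alpha$ is attained (in the limit) at scales $|\xi|$ bounded away from $0$, and on such scales the transport term $\mu_\alpha\xi\cdot\nabla$ drives $|\xi|$ to $0$ as $t\to\infty$, i.e. it sweeps mass toward the origin where, by \rf{psi:0}, $w$ is negligible. Making this rigorous, I would split $\|w(t)\|_\alpha$ into a supremum over $|\xi|\le\rho$ and over $|\xi|\ge\rho$: on $\{|\xi|\le\rho\}$, a propagated version of \rf{psi:0} (the subcriticality is preserved because the collision term, being bounded in $\|\cdot\|_\alpha$ by the fundamental estimate, cannot create a genuine $|\xi|^\alpha$ singularity out of an $o(|\xi|^\alpha)$ one — this needs a small quantitative modulus-of-continuity argument, e.g. introducing $\omega(r)=\sup_{|\xi|\le r}|w(\xi,t)||\xi|^{-\alpha}$ and showing $\omega$ stays small uniformly in $t$) gives a uniformly small contribution; on $\{|\xi|\ge\rho\}$, the flow $\xi\mapsto\xi e^{-\mu_\alpha t}$ (since $\mu_\alpha>0$ for $\alpha\in(0,2)$, as $\lambda_\alpha>0$ by Corollary \ref{cor:la}) pushes these frequencies below any fixed threshold after finite time, so their contribution, which is controlled via \rf{stab:in:0} by $e^{\lambda_\alpha t}$ times the sup of $|\vf_0-\wvf_0|/|\xi|^\alpha$ over the correspondingly expanding set $\{|\xi|\ge\rho e^{\mu_\alpha t}\}$ — and the latter $\to 0$ as $t\to\infty$ precisely by \rf{psi:0} — also tends to $0$. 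Letting first $t\to\infty$ and then $\rho\to 0$ yields $\ell=0$.

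More precisely, for the $\{|\xi|\ge\rho\}$ part the clean route is: by the scaling identity above, $\sup_{|\xi|\ge\rho}|w(\xi,t)||\xi|^{-\alpha} = \sup_{|\eta|\ge\rho e^{\mu_\alpha t}} e^{\lambda_\alpha t}|\vf(\eta,t)-\wvf(\eta,t)||\eta|^{-\alpha}(e^{\mu_\alpha t})^{-\alpha}\cdot(\dots)$ — carrying the bookkeeping carefully, one gets it equal to $\sup_{|\eta|\ge\rho e^{\mu_\alpha t}}|\vf(\eta,t)-\wvf(\eta,t)||\eta|^{-\alpha}$, and then Theorem \ref{thm:stab} applied on that frequency region (the stability proof being local in $\xi$ in the sense that the differential inequality for $|\vf-\wvf|(\xi)/|\xi|^\alpha$ uses only values at $\xi^\pm$, which have $|\xi^\pm|\le|\xi|$, so the region $\{|\eta|\ge\rho e^{\mu_\alpha t}\}$ is invariant under the relevant coupling as $t$ grows — this is a point to check against the actual proof of Theorem \ref{thm:stab}) bounds this by $e^{\lambda_\alpha t}\sup_{|\eta|\ge\rho e^{\mu_\alpha t'}}\dots$ at earlier times, ultimately by a quantity going to $0$ with $t$. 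The main obstacle is exactly this last technical point: showing that the subcritical hypothesis \rf{psi:0} is genuinely propagated by the nonlinear flow in a way that is uniform in time and gives the split above — i.e., that the collision operator does not pump $o(|\xi|^\alpha)$ data up to full $|\xi|^\alpha$ size over the infinite time interval despite the $e^{\lambda_\alpha t}$ growth of the raw stability constant. Resolving it should come down to combining the monotonicity of $\|w(t)\|_\alpha$ in self-similar variables (which already kills the exponential) with the contraction-in-low-frequencies supplied by the transport term, so that the "bad" region recedes to $\xi=0$ where \emph{by hypothesis} $w$ is negligible. The radial reduction is not needed for the argument; \rf{psi:0} and the structure of $\lambda_\alpha$ being $\xi$-independent (Corollary \ref{cor:la}) are what make the scaling exponent exactly cancel.
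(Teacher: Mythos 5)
Your overall strategy --- combine a frequency-localized stability estimate with the self-similar scaling so that hypothesis \rf{psi:0} near the origin controls the limit --- is the right one, and your observation that the stability proof localizes because $|\xi^\pm|\le|\xi|$ is exactly what makes Lemma \ref{lem:stab} and Corollary \ref{cor:stab:non-cut} usable on balls $\{|\xi|\le R\}$. But the decomposition is assembled backwards, and the gap sits precisely where you flag ``the main obstacle.'' On $\{|\xi|\ge\rho\}$ you claim a bound by $e^{\lambda_\alpha t}$ times $\sup_{|\xi|\ge\rho e^{\mu_\alpha t}}|\vf_0-\wvf_0|/|\xi|^\alpha$ and assert this tends to $0$ ``precisely by \rf{psi:0}.'' This fails on two counts: first, \rf{psi:0} controls the difference only as $|\xi|\to 0$ and says nothing about exteriors of balls; second, the localized stability argument cannot be closed on $\{|\xi|\ge\rho\}$, because $|\xi^\pm|\le|\xi|$ means the collision term at a point of that region is fed by values of $\vf,\wvf$ at frequencies \emph{below} $\rho$ --- the exterior region is not invariant under the coupling, so the Gronwall step does not close there (this is exactly the point you said needed checking, and it is where the check fails). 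Meanwhile the half $\{|\xi|\le\rho\}$ is left resting on an unproven ``propagated version of \rf{psi:0}'' via a modulus $\omega(r)$.

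The repair is to invert the split, which is what the paper does and which makes the propagation issue evaporate: take $S$ \emph{large} rather than $\rho\to 0$. For $|\xi|>S$ the bound is trivial, since $|\psi|,|\wpsi|\le 1$ gives $\sup_{|\xi|>S}|\psi-\wpsi|/|\xi|^\alpha\le 2/S^\alpha\le\varepsilon$. For $|\xi|\le S$, apply the localized stability estimate \rf{stab:in} with $R=Se^{-\mu_\alpha t}$; after the change of variables and the exact cancellation $\alpha\mu_\alpha=\lambda_\alpha$ one obtains
$\sup_{|\xi|\le S}|\psi(\xi,t)-\wpsi(\xi,t)|/|\xi|^\alpha\le\sup_{|\xi|\le Se^{-\mu_\alpha t}}|\psi_0(\xi)-\wpsi_0(\xi)|/|\xi|^\alpha$,
and since $\mu_\alpha=\lambda_\alpha/\alpha>0$ for $\alpha\in(0,2)$ the right-hand supremum is taken over a ball shrinking to the origin, hence tends to $0$ by \rf{psi:0}. (Incidentally, your scaling identity has the sign reversed: $\|\psi(t)-\wpsi(t)\|_\alpha=e^{-\lambda_\alpha t}\|\vf(t)-\wvf(t)\|_\alpha$, not $e^{+\lambda_\alpha t}$; with your sign the claimed monotonicity would not follow.) No Duhamel representation along characteristics, dyadic decomposition, or modulus-of-continuity argument is needed.
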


The proof of Theorem \ref{thm:asymp} (given in Section \ref{sec:asymp})
 is very simple and
is an almost immediate consequence of the generalized version of 
the stability inequality \rf{stab:in:0} (see  Lemma  \ref{lem:stab}
and Corollary \ref{cor:stab:non-cut}, below).

Combining  Theorem \ref{thm:asymp} with the property of the self-similar profile
stated in \rf{self:prop:0}, we
find   the condition on the
initial datum $\psi_0=\vf_0$ such the corresponding solution
of \rf{Feq}--\rf{Fini} converges (in self-similar variables) toward
the self-similar profile $\VF_{\alpha,K}$. This particular case of Theorem 
\ref{thm:asymp} is stated in the following corollary.

\begin{corollary}[Self-similar asymptotics]\label{cor:asymp:self}
Assume that the collision kernel $\B$ satisfies 
\rf{non-cut} for some $\alpha_0 \in (0,2)$. Let $\alpha \in [\alpha_0,2)$.
Consider the initial datum  $\psi_0\in \PDa$ such that
\begin{equation}\label{psi:0:K}
\lim_{|\xi|\to 0}\frac{\psi_0(\xi)-1}{|\xi|^\alpha} =K
\quad\mbox{for some} \quad K\leq 0.
\end{equation}
Denote by $\VF_{\alpha,K}$ the self-similar profile of Bobylev and Cercignani.
Then the corresponding solution $\psi(\xi,t)$ of problem
\rf{Feq:psi}--\rf{Fini:psi}
satisfies
$$
\lim_{t\to\infty}\|\psi(t)-\VF_{\alpha,K}\|_\alpha =0
\quad \mbox{if} \quad K<0
$$
and
$$
\lim_{t\to\infty}\|\psi(t)-1\|_\alpha =0
\quad \mbox{if} \quad K=0.
$$
\end{corollary}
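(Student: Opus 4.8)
The plan is to obtain Corollary~\ref{cor:asymp:self} as an immediate specialization of Theorem~\ref{thm:asymp}, comparing the given solution with the Bobylev--Cercignani profile, which is a \emph{stationary} solution in the self-similar variables. Throughout, let $\psi=\psi(\xi,t)$ be the solution of \rf{Feq:psi}--\rf{Fini:psi} with datum $\psi_0=\vf_0$ satisfying \rf{psi:0:K}. Consider first $K<0$. The first point to check is that $\VF_{\alpha,K}$ is itself an admissible initial datum, that is, $\VF_{\alpha,K}\in\PDa$. Since $\VF_{\alpha,K}$ is a characteristic function it is continuous with $|\VF_{\alpha,K}|\le1$, so on each set $\{|\xi|\ge\delta\}$ the quotient $|\VF_{\alpha,K}(\xi)-1|/|\xi|^\alpha$ is continuous and bounded, while near $\xi=0$ it is controlled by \rf{self:prop:0}, which forces it to tend to $|K|$. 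Hence $\|\VF_{\alpha,K}-1\|_\alpha<\infty$. (For $K=0$ one uses instead the constant function $1$, which lies in $\PDa$ since $\|1-1\|_\alpha=0$.)

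Next, subtracting \rf{self:prop:0} from the hypothesis \rf{psi:0:K} gives
\begin{equation*}
\lim_{|\xi|\to0}\frac{\psi_0(\xi)-\VF_{\alpha,K}(\xi)}{|\xi|^\alpha}
=\lim_{|\xi|\to0}\frac{\psi_0(\xi)-1}{|\xi|^\alpha}-\lim_{|\xi|\to0}\frac{\VF_{\alpha,K}(\xi)-1}{|\xi|^\alpha}=K-K=0 ,
\end{equation*}
so that the pair $\psi_0,\VF_{\alpha,K}$ satisfies condition \rf{psi:0} of Theorem~\ref{thm:asymp}; the same computation with $1$ in place of $\VF_{\alpha,K}$ works when $K=0$.

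It then remains to identify the rescaled solution emanating from $\VF_{\alpha,K}$. By the Bobylev--Cercignani construction and Lemma~\ref{lem:lambda-alpha} (which pins down $\mu=\mu_\alpha=\lambda_\alpha/\alpha$), the self-similar function $\vf(\xi,t)=\VF_{\alpha,K}(\xi e^{\mu_\alpha t})$ solves \rf{Feq} with datum $\VF_{\alpha,K}$; moreover $\|\vf(\cdot,t)-1\|_\alpha=e^{\alpha\mu_\alpha t}\|\VF_{\alpha,K}-1\|_\alpha<\infty$, hence $\vf(\cdot,t)\in\PDa\subseteq\mathcal{K}^{\alpha_0}$ for all $t$ by \rf{imb}. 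By the uniqueness statement in Theorem~\ref{thm:exist} this $\vf$ is precisely the solution constructed there, and passing to self-similar variables via \rf{phi=psi} gives $\widetilde\psi(\xi,t):=\vf(\xi e^{-\mu_\alpha t},t)=\VF_{\alpha,K}(\xi)$, i.e.\ the stationary solution of \rf{Feq:psi} already noted after \rf{Fini:psi}. When $K=0$ the constant $1$ is trivially the stationary solution of \rf{Feq:psi}.

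With these preliminaries, Theorem~\ref{thm:asymp} applied to the pair $\psi,\widetilde\psi$ yields $\|\psi(t)-\VF_{\alpha,K}\|_\alpha=\|\psi(t)-\widetilde\psi(t)\|_\alpha\to0$ as $t\to\infty$ in the case $K<0$, and $\|\psi(t)-1\|_\alpha\to0$ in the case $K=0$. There is no serious obstacle here, since the real work is done in Theorem~\ref{thm:asymp} and in the Bobylev--Cercignani existence theorem; the only steps demanding a little care are checking that the profile $\VF_{\alpha,K}$ genuinely sits in $\PDa$ and that, via uniqueness, the stationary function $\VF_{\alpha,K}$ really is the solution of the rescaled problem to which Theorem~\ref{thm:asymp} refers.
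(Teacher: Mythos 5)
Your proposal is correct and follows essentially the same route as the paper, which presents the corollary as an immediate specialization of Theorem~\ref{thm:asymp} obtained by taking $\widetilde\psi_0=\VF_{\alpha,K}$ (or $\widetilde\psi_0\equiv 1$ when $K=0$) and using \rf{self:prop:0} to verify condition \rf{psi:0}. The extra details you supply --- that $\VF_{\alpha,K}\in\PDa$ and that, by uniqueness, the solution of the rescaled problem emanating from $\VF_{\alpha,K}$ is the stationary one --- are exactly the points the paper leaves implicit, and they are handled correctly.
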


\begin{rmk}
It is worth to reformulate the above asymptotic results for solutions to 
problem \rf{Feq}--\rf{Fini} before rescaling stated in \rf{phi=psi}.
Under the assumptions of Theorem~\ref{thm:asymp}, for every initial conditions
$\vf_0,\wvf_0\in\PDa$ such that
$$
\lim_{|\xi|\to 0}\frac{\vf_0(\xi)-\wvf_0(\xi)}{|\xi|^\alpha}=0,
$$ 
 the corresponding solutions $\vf=\vf(\xi,t)$ anf $\wvf=\wvf(\xi,t)$
of problem \rf{Feq}--\rf{Fini} satisfy
\begin{equation*}
\lim_{t\to\infty} e^{-\lambda_\alpha  t} \|\vf(t)-\wvf(t)\|_\alpha=0.
\end{equation*}
This is the immediate consequence of the change of variables \rf{phi=psi}
leading to the following equalities
$$ 
\|\psi(t)-\widetilde\psi(t)\|_\alpha
=
\sup_{\xi\in\RR}\frac{|\vf(\xi e^{-\mu_\alpha t},t)-\wvf(\xi e^{-\mu_\alpha t},t)|}{|\xi|^\alpha}
=
e^{-\lambda_\alpha  t} \|\vf(t)-\wvf(t)\|_\alpha
$$
due to the identity $\lambda_\alpha=\alpha\mu_\alpha$.
\end{rmk}

\begin{rmk}\label{rem:Max}
Let us discuss the large time behavior of solutions to \rf{Feq}--\rf{Fini}
in the case  $\alpha=2$.
Note first that $\mu_2=\lambda_2=0$ which is the immediate consequence 
of the definitions \rf{la0} and \rf{mua0} combined with 
the second equality in \rf{xipm1}.
The proof of Theorem \ref{thm:asymp} does not work for $\alpha=2$
because 
the assumption
$\lambda_\alpha\neq 0$  is essential in our reasoning.
In particular, we are not able to adapt the proof of Theorem 
\ref{thm:asymp} to show the convergence of solutions to \rf{Feq}--\rf{Fini}
from the space $\mathcal{K}^2$ toward  Maxwellians in the Fourier variables,
$\VF_A(\eta)=e^{-A|\eta|^2}$ with $A>0$, which are the solutions of equation 
\rf{Feq-self}
with $\mu=0$. 

The large time asymptotics of solutions
to \rf{Feq}--\rf{Fini} in this limit case $\alpha=2$
was studied by Toscani and Villani \cite{TV}. 
Their result on the convergence of 
solutions to \rf{Feq}--\rf{Fini} to Maxwellians $\VF_A(\eta)=e^{-A|\eta|^2}$
stated in \cite[Cor. 5.3]{TV} 
can be formulated as follows. Assume that $\vf_0$ is the Fourier transform of
a mean zero probability measure with finite second moment 
(hence, $\psi_0\in \mathcal{K}^2$ by Lemma \ref{lem:moment}) which, 
moreover, satisfies \rf{psi:0:K} for some $K=-A<0$. 
Denote by $\psi=\psi(\xi,t)$ 
the corresponding solution.
Then $\|\psi(t)-\VF_A\|_2$ is decreasing to 0 as $t\to \infty$.
\end{rmk}

In view of Remark \ref{rem:Max}, the asymptotics  stated in Corollary 
\ref{cor:asymp:self} should be treated as the extension of the classical
result on the convergence of solutions of the Boltzmann equation to Maxwellian.
Roughly speaking, Corollary \ref{cor:asymp:self} says that
solutions to original problem \rf{eq}--\rf{ini} with infinite energy 
({\it i.e.}~when the Fourier transform of their initial conditions satisfy
\rf{psi:0:K}  for some $\alpha \neq 2$ and $K\neq 0$)
converge, in self-similar variables, toward 
the universal probability measure which is 
the Fourier transform of the self-similar profile
  constructed by Bobylev and Cercignani. This probability measure should 
be treated as the counterpart of Maxwellian which is the solution of 
\rf{Feq-self} with $\alpha=2$ (recall that $\mu_2=0$).
Such a convergence, in a pointwise sense and for very particular  
 initial conditions (radially symmetric and in the form of a series) was  
proved in \cite[Thm.~6.2]{BC02-self}. 
The very simple proof of the convergence
of solutions to \rf{Feq}--\rf{Fini}
toward self-similar profile $\Phi_{\alpha,K}$
in the metric $\|\cdot\|_\alpha$ for every solution (not necessarily radial)
corresponding to 
the initial datum $\psi_0\in\PDa$ satisfying \rf{psi:0:K} is our main contribution 
to this theory.

\section{Continuous positive definite functions}

Since we deal with the Fourier transform of the Boltzmann equation 
and since this equation describes the time evolution of a probability 
measure (the unknown function $f(v,t)$) it is natural to begin our investigation 
recalling the classical definition of  ``characteristic functions". These 
functions have been systematically 
used in the papers  devoted to the study of the 
homogeneous Boltzmann equation  \rf{Feq}--\rf{Fini} in Fourier  variables (e.g. 
\cite{B1,B2,BC02-eternal, BC02-self, CGT, D03, DFT, GTW, TV,V}). 

\begin{definition}\label{def:CF}
A function $\vf:\R^N\to \C$ 
is called  characteristic function if there is a probability measure $ \mu$ 
(i.e. a  Borel measure 
with $ \int_{\mathbb{R}^N}\mu(dx)=1$) such that 
 we have the following identity 
$ \vf(\xi)=\widehat\mu(\xi) =\int_{\R^N} e^{-ix\cdot \xi}\,\mu(dx)$. 
The set of all characteristic functions $\vf:\R^N\to\C$ we 
will be denoted by ${\mathcal{K}}$.
\end{definition}

In some estimates presented in this section, it is more convenient to
introduced 
the more general setting provided by the  definition of 
a {\it positive definite function}.

\begin{definition}\label{def:pos:def}
A function $\vf:\R^N\to \C$ 
is called  positive definite 
if for every $k\in\N$ and every vectors $\xi^1, ...,\xi^k\in \R^N$ the matrix
$\big(\vf(\xi^j-\xi^\ell)\big)_{j,\ell=1,...,k}$ is positive Hermitian, 
{\it i.e.} for all
$\lambda_1,...,\lambda_k\in\C$ w have
\begin{equation}\label{pos:def}
\sum_{j,\ell=1}^k \vf(\xi^j-\xi^\ell)\lambda_j\overline{\lambda_\ell}\geq 0.
\end{equation}

\end{definition}

The following celebrated theorem by Bochner plays a fundamental role in the 
theory of
positive definite functions, since it states that the set of {\it {continuous}} 
positive 
definite functions coincides with the set of characteristic functions. 

\begin{theorem}\label{thm:Bochner}
A function $\vf:\mathbb{R}^N\to \mathbb{C}$ is a characteristic function if and 
only 
if the 
following conditions are fulfilled
\begin{itemize}
\item[i.] $\vf$ is a continuous function on $\mathbb{R}^N$
\item[ii.] $\vf(0)=1$
\item[iii.] $\vf$ is {\it positive definite}.
\end{itemize}
\end{theorem}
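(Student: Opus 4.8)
The plan is to treat the two implications separately: the forward (necessity) direction is immediate, and the converse is the real content. \emph{Necessity.} If $\vf=\widehat\mu$ for a probability measure $\mu$ on $\mathbb{R}^N$, then $\vf(0)=\int_{\mathbb{R}^N}\mu(dx)=1$ gives (ii); the dominated convergence theorem (using $|e^{-ix\cdot\xi}|\le1$ and that $\mu$ is finite) gives the continuity (i); and for all $\xi^1,\dots,\xi^k\in\mathbb{R}^N$ and $\lambda_1,\dots,\lambda_k\in\mathbb{C}$,
\begin{equation*}
\sum_{j,\ell=1}^{k}\vf(\xi^j-\xi^\ell)\,\lambda_j\overline{\lambda_\ell}
=\int_{\mathbb{R}^N}\Big|\sum_{j=1}^{k}\lambda_j\,e^{-ix\cdot\xi^j}\Big|^2\mu(dx)\ge 0,
\end{equation*}
which is (iii).

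\emph{Sufficiency.} Assume (i)--(iii). First I would record the elementary consequences of (iii) read off from its $1\times1$ and $2\times2$ principal submatrices, namely $\vf(-\xi)=\overline{\vf(\xi)}$ and $|\vf(\xi)|\le\vf(0)=1$; in particular $\vf$ is bounded. The next step is to upgrade the discrete positivity (iii) to the integral inequality
\begin{equation}\label{eq:bochner-integral}
\int_{\mathbb{R}^N}\int_{\mathbb{R}^N}\vf(\xi-\eta)\,h(\xi)\,\overline{h(\eta)}\,d\xi\,d\eta\ge 0
\qquad\text{for every }h\in L^1(\mathbb{R}^N).
\end{equation}
For $h\in C_c(\mathbb{R}^N)$ this follows by writing the double integral as a limit of Riemann sums over a common product grid $\{\xi^j\}$: each sum equals $\sum_{j,\ell}\vf(\xi^j-\xi^\ell)\lambda_j\overline{\lambda_\ell}$ with $\lambda_j=h(\xi^j)$ times the cell volume, hence is $\ge0$ by (iii), and the sums converge to the integral because $\vf$ is bounded and uniformly continuous on the relevant compact set; the general case of \eqref{eq:bochner-integral} then follows by density of $C_c(\mathbb{R}^N)$ in $L^1(\mathbb{R}^N)$ together with $\|\vf\|_\infty\le1$.

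I would then apply \eqref{eq:bochner-integral}, for each fixed $x\in\mathbb{R}^N$ and $s>0$, with $h(\xi)=e^{ix\cdot\xi}e^{-s|\xi|^2}$; after the substitution $u=\xi-\eta$, $w=\xi+\eta$ (using $|\xi|^2+|\eta|^2=\tfrac12(|u|^2+|w|^2)$) the double integral factors as a strictly positive Gaussian constant times $\int_{\mathbb{R}^N}\vf(u)\,e^{ix\cdot u}\,e^{-\frac s2|u|^2}\,du$, so that
\begin{equation*}
p_s(x):=(2\pi)^{-N}\int_{\mathbb{R}^N}\vf(u)\,e^{ix\cdot u}\,e^{-\frac s2|u|^2}\,du\ \ge\ 0\qquad(x\in\mathbb{R}^N),
\end{equation*}
the integral converging absolutely since $|\vf|\le1$. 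To identify $p_s(x)\,dx$ as a probability measure I would multiply by $e^{-\varepsilon|x|^2}$, integrate in $x$ by Fubini, note that the resulting kernel in $u$ is a Gaussian of total mass $1$ concentrating at the origin, and let $\varepsilon\to0$: since $u\mapsto\vf(u)e^{-\frac s2|u|^2}$ is continuous with value $1$ at $u=0$, this gives $\int_{\mathbb{R}^N}p_s(x)e^{-\varepsilon|x|^2}\,dx\to1$, whence $\int_{\mathbb{R}^N}p_s(x)\,dx=1$ by monotone convergence. Thus $\mu_s(dx):=p_s(x)\,dx$ is a probability measure, and Fourier inversion (valid because $\vf\,e^{-\frac s2|\cdot|^2}$ and $p_s$ both lie in $L^1$) yields $\widehat{\mu_s}(u)=\vf(u)\,e^{-\frac s2|u|^2}$.

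Finally I would let $s\to0^+$: then $\widehat{\mu_s}(u)\to\vf(u)$ for every $u$, and $\vf$ is continuous with $\vf(0)=1$, so the L\'evy continuity theorem produces a probability measure $\mu$ with $\mu_s\rightharpoonup\mu$ weakly and $\widehat\mu=\vf$; hence $\vf$ is a characteristic function. (If one prefers not to quote L\'evy's theorem, the same conclusion follows by deducing tightness of $\{\mu_s\}_{0<s<1}$ directly from the continuity of $\vf$ at the origin, extracting a weakly convergent subsequence via Prokhorov's theorem, and using that the characteristic function determines the measure.) I expect this limiting step to be the only genuine obstacle, since it is exactly where a compactness principle cannot be avoided; the change of variables and the approximate-identity computation above are routine once the normalizing constants are tracked.
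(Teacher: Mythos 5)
The paper does not prove this statement: it is the classical Bochner theorem, quoted with references to Berg--Forst and Jacob, so there is no in-paper argument to compare against. Your proof is correct and is essentially the standard one found in those references: necessity via the identity $\sum_{j,\ell}\vf(\xi^j-\xi^\ell)\lambda_j\overline{\lambda_\ell}=\int|\sum_j\lambda_je^{-ix\cdot\xi^j}|^2\mu(dx)$, and sufficiency by upgrading the discrete positivity to the integral form, Gaussian regularization to produce the nonnegative densities $p_s$, the approximate-identity computation giving $\int p_s\,dx=1$, Fourier inversion, and finally the L\'evy continuity theorem (or tightness from continuity at the origin plus Prokhorov) to pass to the limit $s\to0^+$. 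All steps check out, including the change of variables $u=\xi-\eta$, $w=\xi+\eta$ with Jacobian $2^{-N}$ that factors the double integral.
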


We refer the reader to the books
either by Berg and Forst \cite[Ch.~I, \S 3]{BF75} or 
 by Jacob \cite[Ch.~3]{J1} 
for proofs  of properties 
of positive definite 
functions which will be listed below.

The reason why we prefer to introduce the larger set of positive definite 
functions  (instead of simple  
characteristic functions) is that we can easily derive estimates on a certain
product 
of positive definite  functions (see inequality (\ref{ineq:2})) that will be useful for the study of 
the 
collision operator. 

Before deriving such key estimates, we start with much simpler results that 
follow immediately from the definition of  positive definite functions.

\begin{lemma}
Every positive definite function $\vf$ 
satisfies
\begin{equation}\label{vf:1}
\overline{\vf(\xi)} =\vf(-\xi) \quad \mbox{and}\quad \vf(0)\geq 0
\end{equation} 
and
\begin{equation}\label{vf:2}
|\vf(\xi)|\leq \vf(0), \quad \mbox{hence}\quad 
\sup_{\xi\in\mathbb{R}^N}|\vf(\xi)|=\vf(0).
\end{equation}
\end{lemma}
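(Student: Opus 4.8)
The plan is to derive all three assertions directly from the defining inequality \rf{pos:def}, by evaluating it on a few cleverly chosen finite configurations of points and coefficients; nothing deeper than the Cauchy--Schwarz-type argument for positive semidefinite $2\times 2$ matrices is needed. First, the inequality $\vf(0)\geq 0$ follows at once by taking $k=1$, the single point $\xi^1=0$, and $\lambda_1=1$ in \rf{pos:def}.

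For the relation $\overline{\vf(\xi)}=\vf(-\xi)$, I would fix $\xi\in\R^N$ and apply \rf{pos:def} with $k=2$, $\xi^1=\xi$, $\xi^2=0$. The relevant matrix is $\left(\begin{smallmatrix}\vf(0)&\vf(\xi)\\\vf(-\xi)&\vf(0)\end{smallmatrix}\right)$, and \rf{pos:def} asserts that the form $\vf(0)\big(|\lambda_1|^2+|\lambda_2|^2\big)+\vf(\xi)\lambda_1\overline{\lambda_2}+\vf(-\xi)\lambda_2\overline{\lambda_1}$ is real (indeed nonnegative) for all $\lambda_1,\lambda_2\in\C$. A matrix whose associated quadratic form is real-valued on all of $\C^2$ must be Hermitian---one equates the form with its complex conjugate and polarizes, or simply tests $(\lambda_1,\lambda_2)=(1,1)$ and $(1,i)$---so the off-diagonal entries are complex conjugates, that is, $\vf(-\xi)=\overline{\vf(\xi)}$. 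In particular $\vf(0)$ is real, consistently with the previous step.

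Finally, for $|\vf(\xi)|\leq\vf(0)$, I would use the same $2\times 2$ matrix, now known to be Hermitian and positive semidefinite. If $\vf(\xi)=0$ there is nothing to prove; otherwise put $a=\vf(0)\geq 0$ and $b=\vf(\xi)\neq 0$, and test \rf{pos:def} with $\lambda_2=1$ and $\lambda_1=-t\,\overline{b}/|b|$, $t\geq 0$, which gives $a t^{2}-2|b|\,t+a\geq 0$ for every $t\geq 0$. If $a>0$, the choice $t=|b|/a$ yields $a-|b|^{2}/a\geq 0$, hence $|\vf(\xi)|\leq\vf(0)$; if $a=0$ the inequality at any $t>0$ forces $b=0$, a contradiction. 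The concluding identity $\sup_{\xi\in\R^N}|\vf(\xi)|=\vf(0)$ is then immediate, since the bound just established is attained at $\xi=0$. I do not anticipate any real obstacle here; the only mild point is extracting Hermitian symmetry of the test matrix from the mere real-valuedness of the form, which is dispatched by the polarization remark above---alternatively, one may simply invoke the standard references \cite{BF75, J1}.
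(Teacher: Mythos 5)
Your proof is correct. The paper itself does not prove this lemma---it delegates it to the references \cite{BF75} and \cite{J1}---and your argument (testing \rf{pos:def} on one- and two-point configurations, extracting Hermitian symmetry by polarization, and then running the quadratic-discriminant computation on the resulting positive semidefinite $2\times 2$ form) is precisely the standard argument found there, carried out without gaps.
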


\begin{lemma}\label{lem:comb}
Any linear combination with positive coefficients 
of positive definite functions
is a positive definite function. 
The set of  positive definite functions is closed with respect
to the pointwise convergence.
\end{lemma}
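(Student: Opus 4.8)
The plan is to verify both assertions directly from Definition~\ref{def:pos:def}; neither requires anything beyond rearranging a finite sum, so I do not expect any real obstacle, the only point deserving attention being that the double sums in \rf{pos:def} range over a fixed finite index set.

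First I would treat the linear-combination statement. Given positive definite functions $\vf_1,\dots,\vf_m$ on $\R^N$ and positive numbers $c_1,\dots,c_m$, I fix an arbitrary $k\in\N$, points $\xi^1,\dots,\xi^k\in\R^N$, and scalars $\lambda_1,\dots,\lambda_k\in\C$, and compute
\begin{equation*}
\sum_{j,\ell=1}^k\Big(\sum_{i=1}^m c_i\vf_i(\xi^j-\xi^\ell)\Big)\lambda_j\overline{\lambda_\ell}
=\sum_{i=1}^m c_i\sum_{j,\ell=1}^k \vf_i(\xi^j-\xi^\ell)\lambda_j\overline{\lambda_\ell}.
\end{equation*}
Each inner double sum is nonnegative by positive definiteness of $\vf_i$, and each $c_i>0$, so the whole expression is nonnegative; hence $\sum_{i=1}^m c_i\vf_i$ satisfies \rf{pos:def} and is therefore positive definite.

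Next I would handle the closedness under pointwise limits. Let $\vf_n\to\vf$ pointwise with every $\vf_n$ positive definite, and fix $k$, the points $\xi^j$, and the scalars $\lambda_j$ as before. Since the sum over $j,\ell$ involves only finitely many indices, passing to the limit term by term yields
\begin{equation*}
\sum_{j,\ell=1}^k\vf(\xi^j-\xi^\ell)\lambda_j\overline{\lambda_\ell}
=\lim_{n\to\infty}\sum_{j,\ell=1}^k\vf_n(\xi^j-\xi^\ell)\lambda_j\overline{\lambda_\ell}\geq 0,
\end{equation*}
because each term of the sequence on the right is $\geq 0$. Thus $\vf$ is positive definite. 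As indicated, the argument is entirely elementary; the one thing to be careful about is \emph{not} to attempt any interchange of a limit with an infinite sum, which is never needed here precisely because the Hermitian matrix in Definition~\ref{def:pos:def} is of fixed finite size.
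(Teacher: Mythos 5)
Your proof is correct and is exactly the standard elementary verification from Definition~\ref{def:pos:def}; the paper itself states this lemma without proof, referring to Berg--Forst and Jacob, where the same argument appears. Nothing is missing: both the splitting of the finite double sum and the term-by-term passage to the limit over a fixed finite index set are valid.
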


\begin{lemma}
The product of two positive definite functions is a positive definite function.
\end{lemma}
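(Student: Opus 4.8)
The statement to prove is: the pointwise product of two positive definite functions is again positive definite. This is a classical fact, and the cleanest route is the Schur product theorem on matrices combined with the definition of positive definiteness given in Definition~\ref{def:pos:def}. The plan is to fix arbitrary test data --- an integer $k\in\N$, vectors $\xi^1,\dots,\xi^k\in\R^N$, and scalars $\lambda_1,\dots,\lambda_k\in\C$ --- and check inequality \rf{pos:def} for the product $\vf\psi$ of two positive definite functions $\vf,\psi$.

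\smallskip

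First I would form the two $k\times k$ matrices $A=\big(\vf(\xi^j-\xi^\ell)\big)_{j,\ell}$ and $B=\big(\psi(\xi^j-\xi^\ell)\big)_{j,\ell}$. By Definition~\ref{def:pos:def} both are positive Hermitian. The matrix associated with $\vf\psi$ and the same data is precisely the Hadamard (entrywise) product $A\circ B=\big(\vf(\xi^j-\xi^\ell)\psi(\xi^j-\xi^\ell)\big)_{j,\ell}$. So the claim reduces to: the Hadamard product of two positive Hermitian matrices is positive Hermitian. To see this directly, write $B=\sum_{m} b_m\bar b_m^{\,\top}$ using a spectral (or Cholesky) decomposition, where each $b_m=(b_{m,1},\dots,b_{m,k})^\top\in\C^k$; this is possible exactly because $B$ is positive Hermitian. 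Then for scalars $\lambda_1,\dots,\lambda_k$,
\begin{equation*}
\sum_{j,\ell=1}^k \big(A\circ B\big)_{j\ell}\lambda_j\overline{\lambda_\ell}
=\sum_m \sum_{j,\ell=1}^k \vf(\xi^j-\xi^\ell)\,(b_{m,j}\lambda_j)\,\overline{(b_{m,\ell}\lambda_\ell)}\geq 0,
\end{equation*}
since for each fixed $m$ the inner double sum is nonnegative by applying the positive definiteness of $\vf$ (inequality \rf{pos:def}) to the scalars $\mu_j:=b_{m,j}\lambda_j$. Summing the nonnegative contributions over $m$ gives the result. Hermitian symmetry of $A\circ B$ is immediate from \rf{vf:1} applied to both $\vf$ and $\psi$, since $\overline{\vf(\xi)\psi(\xi)}=\vf(-\xi)\psi(-\xi)$.

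\smallskip

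Alternatively --- and this is perhaps even shorter, staying entirely at the level of functions --- one can invoke the probabilistic picture: a continuous positive definite function is a characteristic function by Bochner's theorem (Theorem~\ref{thm:Bochner}), and if $\vf=\widehat\mu$, $\psi=\widehat\nu$ then $\vf\psi=\widehat{\mu*\nu}$ is again a characteristic function, hence positive definite. However, this argument as stated needs continuity, so I would prefer the matrix argument above, which works for all positive definite functions without any regularity assumption and matches the generality in which the lemma is stated. I do not anticipate a genuine obstacle here; the only point requiring a little care is the factorization $B=\sum_m b_m\bar b_m^\top$, which is standard for positive Hermitian matrices, and the bookkeeping of complex conjugates when redistributing the scalars $\lambda_j$ into the $b_{m,j}\lambda_j$.
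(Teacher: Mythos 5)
Your proof is correct and follows essentially the same route as the paper: both reduce the statement to the fact that the Hadamard (entrywise) product of two positive Hermitian matrices is positive Hermitian. The only difference is that the paper cites this Schur product theorem from the literature, whereas you supply its standard proof via the factorization $B=\sum_m b_m\bar b_m^{\top}$, which is a fine (and self-contained) addition.
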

\begin{proof}
This is the immediate consequence of 
Definition  \ref{def:pos:def} if we note that for every 
two positive Hermitian  matrices  $(a_{jk})_{j,k=1,...,N}$ and
 $(b_{jk})_{j,k=1,...,N}$, the matrix
 $(c_{jk})_{j,k=1,...,N}$ with elements $c_{jk}=a_{jk}b_{jk}$
is positive Hermitian, see {\it e.g.} \cite[Lemma 3.5.9]{J1}.
\end{proof}

\begin{lemma}\label{lem:Re}
If $\vf$ is a positive definite function, so are $\overline\vf$ and $\Re \vf$.
\end{lemma}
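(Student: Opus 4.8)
The plan is to establish the statement for $\overline\vf$ directly from Definition~\ref{def:pos:def}, and then obtain the claim for $\Re\vf$ as an immediate consequence of Lemma~\ref{lem:comb}.

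For $\overline\vf$: fix $k\in\N$, vectors $\xi^1,\dots,\xi^k\in\R^N$ and scalars $\lambda_1,\dots,\lambda_k\in\C$. I would invoke the positive definiteness of $\vf$ applied to the \emph{same} points $\xi^1,\dots,\xi^k$ but with the \emph{conjugated} coefficients $\overline{\lambda_1},\dots,\overline{\lambda_k}$, which yields
$$
\sum_{j,\ell=1}^k \vf(\xi^j-\xi^\ell)\,\overline{\lambda_j}\,\lambda_\ell \ \geq\ 0 .
$$
Taking the complex conjugate of the left-hand side, and using that a nonnegative real number equals its own conjugate, gives
$$
\sum_{j,\ell=1}^k \overline{\vf(\xi^j-\xi^\ell)}\,\lambda_j\,\overline{\lambda_\ell}\ \geq\ 0,
$$
which is precisely inequality \eqref{pos:def} for $\overline\vf$. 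Hence $\overline\vf$ is positive definite.

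For $\Re\vf$: write $\Re\vf=\tfrac12\vf+\tfrac12\overline\vf$. Since both $\vf$ and $\overline\vf$ are positive definite and the coefficients $\tfrac12,\tfrac12$ are positive, Lemma~\ref{lem:comb} immediately gives that $\Re\vf$ is positive definite.

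There is essentially no obstacle here; the only point requiring a little care is the bookkeeping of indices and conjugates in the first step, where one must be careful to conjugate the coefficients (rather than the arguments) when applying \eqref{pos:def} to $\vf$, so that conjugating the resulting scalar inequality reproduces the defining inequality for $\overline\vf$.
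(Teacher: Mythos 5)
Your proof is correct and follows essentially the same route as the paper: conjugating the defining inequality \eqref{pos:def} (with conjugated coefficients) to handle $\overline\vf$, and then writing $\Re\vf=(\vf+\overline\vf)/2$ and invoking Lemma~\ref{lem:comb}. The extra care you take in specifying which coefficients to conjugate is just a fleshed-out version of the paper's one-line argument.
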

\begin{proof}
To show that $\overline\vf$ is a positive definite function it suffices to 
compute the complex conjugate of inequality \rf{pos:def}. Using equality $\Re 
\vf =(\vf+\overline\vf)/2$ we complete the proof by Lemma \ref{lem:comb}.
\end{proof}

Now, we state two important inequalities for positive definite functions 
which  play the fundamental role in our reasoning when we deal with 
nonintegrable collision kernels. 
By this reason, for the completeness of the exposition, we sketch their proofs,
 see either \cite[Ch.~I, \S 3.4]{BF75} or 
\cite[Lemma 3.5.10]{J1} for more details.

\begin{lemma}
For any positive definite function $\vf=\vf(\xi)$ such that $\vf(0)=1$ we have
\begin{equation}\label{ineq:1}
|\vf(\xi)-\vf(\eta)|^2\leq 2 \big(1-\Re \vf(\xi-\eta)\big)
\end{equation}
and
\begin{equation}\label{ineq:2}
|\vf(\xi)\vf(\eta)-\vf(\xi+\eta)|^2\leq (1-|\vf(\xi)|^2)(1-|\vf(\eta)|^2)
\end{equation}
for all $\xi,\eta\in \R^N$.
\end{lemma}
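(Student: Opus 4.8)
The plan is to prove the two inequalities by reducing each to a statement about a single positive semidefinite Hermitian matrix built from evaluations of $\vf$ at cleverly chosen points. The unifying idea is that for any positive definite $\vf$ with $\vf(0)=1$ and any points $\zeta^1,\dots,\zeta^k$, the $k\times k$ matrix $M_{j\ell}=\vf(\zeta^j-\zeta^\ell)$ is positive semidefinite Hermitian, so all its principal minors are nonnegative; feeding in the right points turns a $2\times 2$ or $3\times 3$ minor condition into the desired inequality.

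For \rf{ineq:1}, I would take the three points $\zeta^1=0$, $\zeta^2=\xi$, $\zeta^3=\eta$, so that the associated $3\times 3$ matrix has diagonal entries all equal to $\vf(0)=1$ and off-diagonal entries $\vf(\xi)$, $\vf(\eta)$, $\vf(\xi-\eta)$ (together with their conjugates, using $\overline{\vf(\zeta)}=\vf(-\zeta)$ from \rf{vf:1}). Positive semidefiniteness of this matrix, applied to the vector $(\,\overline{\vf(\eta)}-\overline{\vf(\xi)},\,1,\,-1\,)$ — or, equivalently, nonnegativity of the relevant quadratic form — yields after expansion exactly $|\vf(\xi)-\vf(\eta)|^2 \le 2(1-\Re\vf(\xi-\eta))$. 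Alternatively one can run the classical argument: the $3\times 3$ determinant being nonnegative gives, after using $|\vf|\le 1$, the same bound; I would pick whichever expansion is cleanest, but the vector-testing route is the most direct and avoids computing a full determinant.

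For \rf{ineq:2}, the natural choice is the four points $0$, $\xi$, $-\eta$, and $\xi+\eta$ (note $\xi-(-\eta)=\xi+\eta$), producing a $4\times 4$ positive semidefinite Hermitian matrix whose entries are all values $\vf(0)=1$, $\vf(\xi)$, $\vf(\eta)$, $\vf(\xi+\eta)$ and conjugates. The quantity $\vf(\xi)\vf(\eta)-\vf(\xi+\eta)$ appears naturally when one computes a suitable $2\times 2$ minor of the Schur complement, or equivalently tests the form against a well-chosen vector such as $(\,\vf(\xi)\vf(\eta)-\vf(\xi+\eta),\,0,\,0,\,\ast\,)$ after first using the rows indexed by $\xi$ and $-\eta$ to eliminate. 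Concretely, one can observe that the $2\times 2$ block indexed by $\{0,\xi+\eta\}$ of the Schur complement with respect to the block indexed by $\{\xi,-\eta\}$ must be positive semidefinite; its off-diagonal entry is (up to conjugation) $\vf(\xi+\eta)-\vf(\xi)\vf(\eta)$ and its diagonal entries are $1-|\vf(\xi)|^2$ and $1-|\vf(\eta)|^2$, so nonnegativity of the $2\times 2$ determinant is precisely \rf{ineq:2}. One must handle the degenerate case $|\vf(\xi)|=1$ or $|\vf(\eta)|=1$ separately (the Schur complement is then taken in a limiting sense, or one notes both sides behave consistently), but this is routine.

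The main obstacle is purely bookkeeping: correctly setting up the Hermitian matrices with the right sign conventions for the Fourier transform (so that $\vf(\zeta^j-\zeta^\ell)$, not $\vf(\zeta^\ell-\zeta^j)$, sits in position $(j,\ell)$) and then choosing the test vector or the Schur-complement block so that the cross term reproduces exactly $\vf(\xi)\vf(\eta)-\vf(\xi+\eta)$ rather than some variant. Once the matrix and the block are fixed, the inequalities drop out of a two-line expansion. Since the statement itself says we only "sketch" the proofs and refers to \cite[Ch.~I, \S 3.4]{BF75} and \cite[Lemma 3.5.10]{J1}, I would present the matrix set-up and the key minor/Schur-complement computation and leave the elementary algebraic expansion to the reader.
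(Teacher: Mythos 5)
Your overall strategy --- reading off \rf{ineq:1} and \rf{ineq:2} from the positive semidefiniteness of a small Hermitian matrix built on well-chosen points --- is exactly the paper's, which works throughout with the $3\times3$ matrix on the points $0,\xi,\eta$. However, both of the concrete choices you commit to fail as stated, and not merely for bookkeeping reasons. For \rf{ineq:1}: testing the form $\sum_{j,\ell}\vf(\xi^j-\xi^\ell)\lambda_j\overline{\lambda_\ell}$ on $(0,\xi,\eta)$ with your vector $\bigl(\overline{\vf(\eta)}-\overline{\vf(\xi)},\,1,\,-1\bigr)$ gives, writing $d=\vf(\xi)-\vf(\eta)$,
\[
|d|^2+2-2\Re\bigl(d^2\bigr)-2\Re\vf(\xi-\eta)\;\ge\;0 .
\]
Since $\Re(d^2)\le|d|^2$ in general (with equality only when $d$ is real), this is strictly weaker than $|d|^2\le 2\bigl(1-\Re\vf(\xi-\eta)\bigr)$ and does not imply it. The cross term must produce $-2|d|^2$, not $-2\Re(d^2)$, so the first coordinate has to carry the phase of $d$ rather than of $\overline d$: under the paper's convention the correct vector is $\bigl(\vf(\eta)-\vf(\xi),\,1,\,-1\bigr)$. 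The paper achieves the same effect by putting the unimodular factor $|d|/d$ into $\lambda_2=-\lambda_3$ and running a discriminant argument in the remaining real parameter $s$; your ``plug in one optimal vector'' variant is equivalent, but only with the conjugation fixed.

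For \rf{ineq:2}: your $4\times4$ matrix on $0,\xi,-\eta,\xi+\eta$ does \emph{not} have all entries among $\vf(0),\vf(\xi),\vf(\eta),\vf(\xi+\eta)$ and their conjugates --- the pair $(-\eta,\xi+\eta)$ contributes the entry $\vf(-\xi-2\eta)$. Consequently the Schur complement with respect to the block indexed by $\{\xi,-\eta\}$ involves this extraneous value, and its diagonal entries are not $1-|\vf(\xi)|^2$ and $1-|\vf(\eta)|^2$; the computation you describe does not go through. The correct and much shorter route is the $3\times3$ matrix on the three points $0,\xi,-\eta$ (whose pairwise differences are $-\xi$, $\eta$ and $\xi+\eta$, so only the advertised values occur): its determinant equals $(1-|\vf(\xi)|^2)(1-|\vf(\eta)|^2)-|\vf(\xi)\vf(\eta)-\vf(\xi+\eta)|^2$, and nonnegativity of this determinant is precisely \rf{ineq:2}. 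This is the paper's argument, stated there for the matrix on $0,\xi,\eta$ and then applied with $\eta$ replaced by $-\eta$. Equivalently --- and this is the clean version of the Schur-complement idea you were reaching for --- the Schur complement of that $3\times3$ matrix with respect to the single entry at the point $0$ is the $2\times2$ matrix with diagonal $1-|\vf(\xi)|^2$, $1-|\vf(\eta)|^2$ and off-diagonal $\vf(\xi+\eta)-\vf(\xi)\vf(\eta)$, whose determinant condition is \rf{ineq:2}; no fourth point is needed.
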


\begin{proof} 
We are going to use 
inequality \rf{pos:def}
with  suitable chosen vectors $\xi^j$ and constants~$\lambda_j$. 
Indeed, for $\xi,\eta\in \R^N$ such that $\vf(\xi)\neq \vf(\eta)$ 
we consider the Hermitian
matrix
\begin{equation}\label{matrix}
\left(
\begin{array}{ccc}
\vf(0)&\overline{\vf(\xi)}&\overline{\vf(\eta)}\\
\vf(\xi)& \vf(0)& \vf(\xi-\eta)\\
\vf(\eta)& \overline{\vf(\xi-\eta)}&\vf(0)
\end{array}
\right),
\end{equation}
where $\vf(0)=1$.
Next, with arbitrary and given $s\in\R$, we 
define
$$
\lambda_1=s,\quad \lambda_2=\frac{s|\vf(\xi)-\vf(\eta)|}{\vf(\xi)-\vf(\eta)},
\quad \lambda_3=-\lambda_2.
$$
Hence, applying inequality \rf{pos:def}, 
we find by a straightforward calculation
$$
1+2s^2+2s|\vf(\xi)-\vf(\eta)|-2s^2\Re \vf(\xi-\eta)\geq 0.
$$
This means that the discriminate of the quadratic form on the 
left-hand side  (as the function of 
$s$) has to be nonpositive, hence,
$$
4|\vf(\xi)-\vf(\eta)|^2\leq 4(2-2\Re \vf(\xi-\eta)).
$$
which completes the proof of \rf{ineq:1}.

On the other hand,
inequality \rf{ineq:2} is equivalent to the fact that the determinant of the
Hermitian matrix \rf{matrix} with $\vf(0)=1$ is non-negative.
\end{proof}

Let us now recall the definition of the function space  

\begin{equation}\label{PDa2}
\PDa=\Big\{ \vf:\mathbb{R}^3\to \mathbb{C}\ \mbox{is a characteristic function 
such that} \;\;\|\vf-1\|_\alpha< \infty\Big\},
\end{equation}
supplemented with the metric
\begin{equation*}
\|\vf-\widetilde\vf\|_\alpha 
\equiv \sup_{\xi\in\RR} \frac{|\vf(\xi)-\widetilde\vf(\xi)|}{|\xi|^\alpha}.
\end{equation*}

First, we give  examples of characteristic functions from
the space $\PDa$.

\begin{examp}\label{exa}
\begin{itemize}

\item[i.] The function $\vf=\vf(\xi)$ satisfying $\vf(0)=1$ and $\vf(\xi)=0$ for 
$\xi$ different from zero is a positive 
definite function, however,
 it is not a characteristic function (since it is not 
continuous)

\item[ii.] The function $\vf(\xi)=e^{-ib\cdot\xi}$, with fixed $b\in\RR$,
 is the Fourier transform
of the Dirac delta $\delta_b$ concentrated at $b$. It belongs to $\PDa$ for 
every $\alpha\in [0,1]$.

\item[iii.] Maxwellians in the Fourier variables, $\vf(\xi)=e^{-A|\xi|^2}$ with 
fixed $A>0$, belongs to $\PDa$ for every $\alpha\in [0,2]$.

\item[iv.] The function $\vf_\alpha(\xi)=e^{-|\xi|^\alpha}$ is a 
characteristic function for each $\alpha\in (0,2]$ because this is the Fourier
transform of the probability distribution of an $\alpha$-stable symmetric 
L\'evy process, see {\it e.g.} \cite[Examples 3.5.23 and 3.9.17]{J1}  for more 
details.
Hence, $\vf_\alpha\in\mathcal{K}^{\beta}$ for 
each $\beta\in [0,\alpha]$.
\end{itemize}
\end{examp}

\begin{proposition}\label{prop:PDa}
For every $\alpha\in [0,2]$, the set $\PDa$
endowed with the distance \rf{distance}
is a complete metric space.
\end{proposition}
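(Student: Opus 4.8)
\textbf{Proof strategy for Proposition \ref{prop:PDa}.}

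The plan is to verify the three metric-space axioms for $\|\vf-\wvf\|_\alpha$ and then establish completeness. First I would check that $\|\cdot\|_\alpha$ is a genuine metric on $\PDa$. Symmetry is immediate from the definition \rf{distance}, and the triangle inequality follows at once from the triangle inequality for $|\cdot|$ applied pointwise inside the supremum. The only subtle point for the metric axioms is the identity of indiscernibles: if $\|\vf-\wvf\|_\alpha=0$ then $\vf(\xi)=\wvf(\xi)$ for all $\xi\neq 0$, and since characteristic functions are continuous (Bochner, Theorem \ref{thm:Bochner}) we also get $\vf(0)=\wvf(0)$, so $\vf=\wvf$. I would also note that the distance is always finite on $\PDa$: for $\vf,\wvf\in\PDa$ we have $\|\vf-\wvf\|_\alpha\leq\|\vf-1\|_\alpha+\|\wvf-1\|_\alpha<\infty$ by the triangle inequality (here $1$ denotes the constant characteristic function).

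Next, completeness. Let $(\vf_n)$ be a Cauchy sequence in $(\PDa,\|\cdot\|_\alpha)$. For each fixed $\xi\neq 0$ the estimate $|\vf_n(\xi)-\vf_m(\xi)|\leq |\xi|^\alpha\,\|\vf_n-\vf_m\|_\alpha$ shows that $(\vf_n(\xi))$ is Cauchy in $\C$, so it converges to some limit, which I call $\vf(\xi)$; at $\xi=0$ all characteristic functions equal $1$, so set $\vf(0)=1$. The convergence is in fact uniform on every set of the form $\{|\xi|\le R\}$: given $\varepsilon>0$, choose $N$ with $\|\vf_n-\vf_m\|_\alpha<\varepsilon$ for $n,m\ge N$; then $|\vf_n(\xi)-\vf_m(\xi)|\le R^\alpha\varepsilon$ for $|\xi|\le R$, and letting $m\to\infty$ gives $|\vf_n(\xi)-\vf(\xi)|\le R^\alpha\varepsilon$. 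Uniform convergence on compact sets together with continuity of each $\vf_n$ yields that $\vf$ is continuous. Moreover, $\vf$ is a pointwise limit of positive definite functions, hence positive definite by Lemma \ref{lem:comb}, and $\vf(0)=1$; therefore by Bochner's theorem $\vf$ is a characteristic function.

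It remains to check $\vf\in\PDa$, i.e.\ $\|\vf-1\|_\alpha<\infty$, and that $\|\vf_n-\vf\|_\alpha\to 0$. For the latter: fix $\varepsilon>0$ and $N$ as above so that $\|\vf_n-\vf_m\|_\alpha<\varepsilon$ for $n,m\ge N$; then for every $\xi\neq 0$, letting $m\to\infty$ in $|\vf_n(\xi)-\vf_m(\xi)|\le\varepsilon|\xi|^\alpha$ gives $|\vf_n(\xi)-\vf(\xi)|\le\varepsilon|\xi|^\alpha$, hence $\|\vf_n-\vf\|_\alpha\le\varepsilon$ for all $n\ge N$. This proves $\vf_n\to\vf$ in $\|\cdot\|_\alpha$. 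Finiteness of $\|\vf-1\|_\alpha$ then follows from $\|\vf-1\|_\alpha\le\|\vf-\vf_N\|_\alpha+\|\vf_N-1\|_\alpha<\infty$, since $\vf_N\in\PDa$. The one place demanding a little care — the main (though mild) obstacle — is handling the point $\xi=0$ throughout: the pseudo-norm simply ignores it, so one must invoke continuity and the normalization $\vf(0)=1$ (both guaranteed by the Bochner characterization) to control the limit there and to ensure the limit object is again a characteristic function rather than merely a positive definite one. Everything else is a routine $\varepsilon$-argument.
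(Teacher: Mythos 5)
Your proof is correct and follows essentially the same route as the paper, which simply observes that the conclusion is immediate from the closedness of the set of characteristic functions under pointwise convergence. Your write-up usefully supplies the one detail the paper leaves implicit: a pointwise limit of characteristic functions need only be positive definite, and it is the local uniform convergence forced by the weight $|\xi|^{\alpha}$ on bounded sets that guarantees continuity of the limit, so that Bochner's theorem applies and the limit is again a characteristic function.
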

 
\begin{proof}
The proof is immediate because the set of characteristic functions is
closed with respect to the pointwise convergence. 
\end{proof}

Next, we state without the proof simple properties 
of the  space $\PDa$.

\begin{lemma}\label{lem:elem}
\begin{itemize}
\item[i.] The space $\PDa$ is not a vector space ({\it e.g.} 
$\vf(\xi)\equiv 0$ does not belong to $\PDa$).

\item[ii.] $\vf\equiv 1\in\PDa$ for every $\alpha\geq 0$.
\item[iii.] For every $\vf\in\PDa$ we have
$
 |\vf(\xi)|\leq \vf(0)=1
$
({\it cf.} \rf{vf:2}).

\item[iv.] For all $\vf,\wvf\in\PDa$ their product satisfies 
$
 \vf \wvf\in\PDa.
$

\item[v.] Any linear and convex  combination of functions from 
$\PDa$ belongs to $\PDa$ ({\it cf.} Lemma \ref{lem:comb}). 

\end{itemize}
\end{lemma}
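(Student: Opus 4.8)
The plan is to check the five items one by one directly from Definition~\ref{def:CF}, Bochner's theorem (Theorem~\ref{thm:Bochner}), the elementary bound \rf{vf:2}, and the definition of the pseudo-norm \rf{norm}; the only item that requires an actual estimate rather than a one-line observation is~iv (and, in the same spirit, v). For item~i, I would note that the constant function $\vf\equiv 0$ has $\vf(0)=0\neq 1$, so by Theorem~\ref{thm:Bochner} it is not a characteristic function and hence $0\notin\PDa$; a subset of a vector space that fails to contain the zero vector cannot itself be a vector space. (One could equally remark that $\PDa$ is not stable under scalar multiplication: if $\vf\in\PDa$ then $2\vf$ violates $\vf(0)=1$.) For item~ii, the constant function $1$ is the Fourier transform of the Dirac mass $\delta_0$, hence a characteristic function, and $\|1-1\|_\alpha=0<\infty$, so $1\in\PDa$ for every $\alpha\ge 0$. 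For item~iii, a characteristic function is continuous, positive definite, and normalized by $\vf(0)=1$ (Theorem~\ref{thm:Bochner}), so \rf{vf:2} gives $|\vf(\xi)|\le\vf(0)=1$ for all $\xi$.

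For item~iv, I would first recall that the product of two characteristic functions is again a characteristic function: the product of positive definite functions is positive definite (shown above), continuity and the value at the origin are clearly preserved, so Bochner's theorem applies (equivalently, $\widehat\mu\,\widehat\nu=\widehat{\mu*\nu}$). To bound the pseudo-norm I would use the splitting
\[
\vf(\xi)\wvf(\xi)-1=\bigl(\vf(\xi)-1\bigr)\wvf(\xi)+\bigl(\wvf(\xi)-1\bigr),
\]
divide by $|\xi|^\alpha$, and invoke $|\wvf(\xi)|\le 1$ from item~iii to conclude
$\|\vf\wvf-1\|_\alpha\le\|\vf-1\|_\alpha+\|\wvf-1\|_\alpha<\infty$, whence $\vf\wvf\in\PDa$.

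For item~v, given $\vf_1,\dots,\vf_k\in\PDa$ and coefficients $\theta_1,\dots,\theta_k\ge 0$, the combination $\vf=\sum_j\theta_j\vf_j$ is positive definite by Lemma~\ref{lem:comb} and continuous as a finite sum of continuous functions; since membership in $\PDa$ forces $\vf(0)=\sum_j\theta_j=1$, the relevant statement is for convex combinations, and under that normalization $\vf$ is a characteristic function by Theorem~\ref{thm:Bochner}. Writing $\vf-1=\sum_j\theta_j(\vf_j-1)$ and dividing by $|\xi|^\alpha$ gives $\|\vf-1\|_\alpha\le\sum_j\theta_j\|\vf_j-1\|_\alpha<\infty$, so $\vf\in\PDa$. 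There is no serious obstacle anywhere; the one point to stay attentive to is the repeated use of Bochner's theorem to upgrade ``positive definite and continuous, normalized at $0$'' back to ``characteristic function'' before concluding membership in $\PDa$, together with the systematic use of the uniform bound $|\vf|\le 1$ from item~iii whenever a product appears.
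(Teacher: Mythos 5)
Your proof is correct, and since the paper states this lemma explicitly without proof, your argument supplies exactly the steps the authors' cross-references point to: Bochner's theorem and the bound \rf{vf:2} for items i--iii, the factorization $\vf\wvf-1=(\vf-1)\wvf+(\wvf-1)$ together with $|\wvf|\le 1$ for item iv, and Lemma \ref{lem:comb} with the triangle inequality for item v. Your observation that the combination in item v must in fact be convex (so that the value at the origin remains $1$) is a correct reading of the statement.
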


In the following lemma, we explain why we limit ourselves to 
$\alpha\in [0,2]$ in the definition of $\PDa$.

\begin{lemma}\label{lem:imb}
\begin{itemize}
\item[i.] ${\mathcal K}^0={\mathcal K}$
\item[ii.] ${\mathcal K}^{\alpha_1}\subseteq{\mathcal K}^{\alpha_2}$ if 
$\alpha_2\leq \alpha_1$.
\item[iii.] ${\mathcal K}^\alpha=\{1\}$ for every $\alpha>2$.
\end{itemize}
\end{lemma}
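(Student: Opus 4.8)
For part (i), the claim ${\mathcal K}^0={\mathcal K}$ is essentially a matter of unwinding definitions: for $\alpha=0$ the pseudo-norm in \rf{norm} reads $\|\vf-1\|_0=\sup_\xi|\vf(\xi)-1|$, which is automatically finite for any characteristic function since $|\vf(\xi)|\leq 1$ by \rf{vf:2} gives $|\vf(\xi)-1|\leq 2$. Hence the constraint $\|\vf-1\|_0<\infty$ is vacuous and ${\mathcal K}^0$ is exactly the set of all characteristic functions, which is ${\mathcal K}$ by Definition \ref{def:CF}.

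For part (ii), suppose $\vf\in{\mathcal K}^{\alpha_1}$ and $\alpha_2\leq\alpha_1$ (both in $[0,2]$). I would split the supremum defining $\|\vf-1\|_{\alpha_2}$ into the regions $|\xi|\leq 1$ and $|\xi|>1$. On $|\xi|\leq 1$ we have $|\xi|^{\alpha_2}\geq|\xi|^{\alpha_1}$ (since the exponent is smaller and the base is $\leq 1$), so
\[
\frac{|\vf(\xi)-1|}{|\xi|^{\alpha_2}}\leq\frac{|\vf(\xi)-1|}{|\xi|^{\alpha_1}}\leq\|\vf-1\|_{\alpha_1}.
\]
On $|\xi|>1$ we simply bound $|\vf(\xi)-1|\leq 2$ and $|\xi|^{\alpha_2}\geq 1$, giving the ratio $\leq 2$. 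Taking the supremum over both regions yields $\|\vf-1\|_{\alpha_2}\leq\max\{\|\vf-1\|_{\alpha_1},2\}<\infty$, so $\vf\in{\mathcal K}^{\alpha_2}$.

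Part (iii) is the substantive point, and I expect it to be the main obstacle. The assertion is that if $\alpha>2$ then a characteristic function $\vf$ with $\sup_\xi|\vf(\xi)-1|/|\xi|^\alpha<\infty$ must be identically $1$. The finiteness of the pseudo-norm forces $|\vf(\xi)-1|=O(|\xi|^\alpha)$ as $|\xi|\to 0$, hence $\vf(\xi)=1+o(|\xi|^2)$ near the origin. The plan is to use the probabilistic meaning of $\vf=\widehat\mu$: the estimate $1-\Re\vf(\xi)=o(|\xi|^2)$ controls the second moment of $\mu$. Concretely, for a unit vector $e$ and $\xi=t e$,
\[
1-\Re\vf(te)=\int_{\RR}\big(1-\cos(t\,x\cdot e)\big)\,\mu(dx),
\]
and the integrand is nonnegative; dividing by $t^2$ and applying Fatou's lemma as $t\to 0$ (using $(1-\cos s)/s^2\to 1/2$ and the elementary bound $1-\cos s\leq s^2/2$) shows $\int (x\cdot e)^2\,\mu(dx)\leq \liminf_{t\to 0}(1-\Re\vf(te))/(t^2/2)=0$. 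Therefore $\mu$ is supported where $x\cdot e=0$ for every $e$, i.e.\ $\mu=\delta_0$, and so $\vf\equiv 1$. Alternatively one can argue directly at the level of $\vf$: the condition implies $\vf$ is twice differentiable at $0$ with vanishing Hessian, and a positive definite function whose real part has a vanishing second-order Taylor expansion at the origin, combined with inequality \rf{ineq:1} in the form $|\vf(\xi+h)-\vf(\xi)|^2\leq 2(1-\Re\vf(h))=o(|h|^2)$, is forced to be constant — and the constant is $\vf(0)=1$. The delicate step is justifying the interchange of limit and integral (or the differentiability claim) rigorously; Fatou's lemma handles it cleanly, so I would present the probabilistic argument. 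Finally, $\{1\}\subseteq{\mathcal K}^\alpha$ for every $\alpha$ is trivial since $\|1-1\|_\alpha=0$, giving equality.
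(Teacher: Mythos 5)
Your proposal is correct. Parts (i) and (ii) coincide with the paper's proof: (i) is the observation that $|\vf(\xi)-1|\leq 2$ by \rf{vf:2}, and (ii) is the same splitting of the supremum over $|\xi|\leq 1$ and $|\xi|>1$. For part (iii) your primary route differs from the paper's. The paper stays entirely at the level of positive definite functions: from $\|\vf-1\|_\alpha<\infty$ with $\alpha>2$ it gets $|1-\vf(h\zeta)|/h^2\to 0$, feeds this into inequality \rf{ineq:1} to conclude that every directional difference quotient $(\vf(\xi+h\zeta)-\vf(\xi))/h$ tends to $0$ uniformly in $\xi$, and deduces that $\vf$ is constant, hence $\equiv\vf(0)=1$. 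You instead pass to the representing measure $\mu$, write $1-\Re\vf(te)=\int(1-\cos(t\,x\cdot e))\,\mu(dx)$, and use Fatou's lemma together with $1-\Re\vf(te)\leq\|\vf-1\|_\alpha\,t^\alpha=o(t^2)$ to force $\int(x\cdot e)^2\,\mu(dx)=0$ for every direction $e$, whence $\mu=\delta_0$ and $\vf\equiv 1$. Both arguments are sound; yours yields the slightly stronger probabilistic conclusion (it identifies $\mu$ explicitly and shows the second moment vanishes), while the paper's avoids Bochner's representation and any limit--integral interchange, using only the elementary inequality \rf{ineq:1} already established for positive definite functions. The alternative you sketch at the end is essentially the paper's argument, so you have in effect found both; the auxiliary claim there about twice differentiability and a vanishing Hessian is not needed and is best dropped, since the estimate $|\vf(\xi+h)-\vf(\xi)|^2\leq 2(1-\Re\vf(h))=o(|h|^2)$ already does all the work.
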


\begin{proof}
In the case of i., it suffices to use \rf{vf:2} in order to see that any 
characteristic  function  
$\vf$ is bounded, more precisely, it 
satisfies
$\sup_{\xi\in\R^N}|\vf(\xi)-1|\leq \vf(0)+1$.

To show ii., for any $\vf\in{\mathcal K}^{\alpha_1}$, we proceed as 
follows
\begin{equation*}
\begin{split}
\|\vf-1\|_{\alpha_2}&
\leq \sup_{|\xi|\leq 1}\frac{|\vf(\xi)-1|}{|\xi|^{\alpha_2}}
+\sup_{|\xi|> 1}\frac{|\vf(\xi)-1|}{|\xi|^{\alpha_2}}\\
&
\leq \sup_{|\xi|\leq 1}\frac{|\vf(\xi)-1|}{|\xi|^{\alpha_1}}
+\sup_{|\xi|> 1}|\vf(\xi)-1|\\
&\leq \|\vf -1\|_{\alpha_1}+\vf(0)+1,
\end{split}
\end{equation*}
since 
$\alpha_2\leq \alpha_1$ and by using \rf{vf:2}. Hence, $\vf \in {\mathcal 
K}^{\alpha_2}$.

Let us show  iii. It follows immediately form 
eq. \rf{norm} that any $\vf \in \PDa$ with $\alpha >2$ satisfies
\begin{equation}\label{iii}
\left|\frac{1-\vf(\xi)}{|\xi|^2}\right|\leq |\xi|^{\alpha-2}\|\vf-1\|_\alpha
\to 0 \quad\mbox{as}\quad |\xi|\to 0.
\end{equation}
Next, using inequality \rf{ineq:1} we get for any unit vector $\zeta\in \RR$ 
and all $\xi\in\RR$ 
\begin{equation*}
\left|\frac{\vf(\xi+h\zeta)-\vf(\xi)}{h}
\right|^2\leq
2\frac{\big(1-\Re\vf (h\zeta)\big)}{h^2}
\leq 2\left| \frac{1-\vf (h\zeta)}{h^2}\right|,
\end{equation*}
thus, by \rf{iii}, we have
\begin{equation*}
\lim_{h\to 0}\frac{\vf(\xi+h\zeta)-\vf(\xi)}{h}=0.
\end{equation*}
Hence,  for all $\zeta\in \RR$ the directional derivative 
$\zeta\cdot\nabla \vf(\xi)$ 
exists and is equal to zero, implying that $\vf$ is constant. 
\end{proof}

\begin{lemma}\label{lem:ReIm}
Let $\alpha\in [0,2]$.
Assume that $\vf\in \PDa$. Then $\Re\vf \in\PDa$,
\begin{equation}\label{ReIm}
\|\Re\vf -1\|_\alpha\leq \|\vf-1\|_\alpha,
\quad\mbox{and}\quad
\sup_{\xi\in \RR\setminus\{0\}}\frac{|\Im \vf(\xi)|}{|\xi|^\alpha}\leq 
\|\vf-1\|_\alpha.
\end{equation}
\end{lemma}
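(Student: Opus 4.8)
The plan is to treat the three assertions in turn, deriving everything from one elementary algebraic identity together with the membership of $\Re\vf$ in the class of characteristic functions. The only external inputs needed are Lemma \ref{lem:Re}, Bochner's theorem (Theorem \ref{thm:Bochner}), and the bound $|\vf(\xi)|\le\vf(0)=1$ from item iii of Lemma \ref{lem:elem}.

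First I would check that $\Re\vf$ is a characteristic function. By Lemma \ref{lem:Re}, $\Re\vf$ is positive definite. Since $\vf\in\PDa$ is a characteristic function it is continuous with $\vf(0)=1$, hence $\Re\vf$ is continuous and $\Re\vf(0)=1$. By Bochner's theorem, $\Re\vf$ is therefore a characteristic function; it remains only to check that $\|\Re\vf-1\|_\alpha<\infty$, which will follow from the first inequality below.

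The core of the argument is the observation that, for every $\xi\in\RR$,
\[
|\vf(\xi)-1|^2=\big(\Re\vf(\xi)-1\big)^2+\big(\Im\vf(\xi)\big)^2 .
\]
Discarding the nonnegative term $(\Im\vf(\xi))^2$ gives $|\Re\vf(\xi)-1|\le|\vf(\xi)-1|$, and discarding $(\Re\vf(\xi)-1)^2$ gives $|\Im\vf(\xi)|\le|\vf(\xi)-1|$. Dividing each inequality by $|\xi|^\alpha$ for $\xi\ne 0$ and taking the supremum over $\xi\in\RR\setminus\{0\}$ yields $\|\Re\vf-1\|_\alpha\le\|\vf-1\|_\alpha$ (which in particular is finite, so $\Re\vf\in\PDa$) and $\sup_{\xi\ne 0}|\Im\vf(\xi)|/|\xi|^\alpha\le\|\vf-1\|_\alpha$, as claimed.

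There is no real obstacle here: the statement is essentially a packaging of the decomposition of $\vf-1$ into real and imaginary parts. The only place one should be slightly careful is the very first point, namely justifying that $\Re\vf$ is again a characteristic function rather than merely a positive definite function; this is exactly what the continuity clause in Bochner's theorem provides, and continuity of $\Re\vf$ is inherited from that of $\vf$. (Alternatively, one could bypass Bochner by writing $\Re\vf=(\vf+\overline\vf)/2$ and recognizing $\overline\vf$ as the Fourier transform of the reflected probability measure, so that $\Re\vf$ is the Fourier transform of the symmetrization of $\mu$; but invoking Bochner keeps the proof self-contained within the tools already set up.)
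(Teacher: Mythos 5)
Your proof is correct and follows essentially the same route as the paper's: Lemma \ref{lem:Re} plus Bochner's theorem to see that $\Re\vf$ is a characteristic function, then the Pythagorean identity $|\vf(\xi)-1|^2=(\Re\vf(\xi)-1)^2+(\Im\vf(\xi))^2$, from which both inequalities follow by dropping one nonnegative term, dividing by $|\xi|^\alpha$, and taking suprema.
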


\begin{proof}
Let $\vf\in \PDa$. It is well-known that $\Re\vf$ is a characteristic function
({\it e.g.}~it suffices to combine Lemma \ref{lem:Re} with the Bochner Theorem
\ref{thm:Bochner}).
Now, by the Pythagoras theorem, we obtain
\begin{equation}\label{ReIm:1}
|\vf(\xi)-1|^2=|\Im \vf(\xi)|^2+|\Re\vf(\xi)-1|^2
\geq |\Re\vf(\xi)-1|^2.
\end{equation}
Hence, we complete the proof of the first inequality in \rf{ReIm}
dividing \rf{ReIm:1} by $|\xi|^\alpha$ and computing 
the supremum with respect to $\xi\in\RR$.

To show the second inequality in \rf{ReIm}, 
we proceed analogously using the inequality
$|\vf(\xi)-1|\geq |\Im \vf(\xi)|$ resulting from \rf{ReIm:1}.
\end{proof}

Now, we are in a position to prove an inequality which 
implies (see the proof of Lemma \ref{lem:AA})
that the nonlinear term in equation \rf{Feq} is well-defined 
for functions from $\PDa$ if we impose the condition \rf{non-cut} on 
the collision kernel.

\begin{lemma}\label{lem:in:c}
Let $\alpha\in [0,2]$.
Assume that $\vf\in \PDa$. For every $\xi\in\RR$ define $\xi^+$ and $\xi^-$ by
equations \rf{xipm} with some fixed $n\in S^2$.
Then
\begin{equation}\label{vf:alpha}
|\vf(\xi^+)\vf(\xi^-)-\vf(\xi)|\leq 4 |\xi^+|^{\alpha/2}|\xi^-|^{\alpha/2}
\|\vf-1\|_\alpha.
\end{equation}
\end{lemma}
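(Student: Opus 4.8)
The plan is to combine the two key positive-definiteness inequalities established above, namely \rf{ineq:2} applied to $\xi^+$ and $\xi^-$ (whose sum is $\xi$ by \rf{xipm1}), together with the elementary bound $1-|\vf(\eta)|^2 \le 2(1-|\vf(\eta)|) \le 2|\Re\vf(\eta)-1|+\dots$ — more precisely $1-|\vf(\eta)|^2=(1-|\vf(\eta)|)(1+|\vf(\eta)|)\le 2(1-|\vf(\eta)|)$, and then relate $1-|\vf(\eta)|$ to $|\vf(\eta)-1|\le |\xi|^\alpha\|\vf-1\|_\alpha$ using the fact that $|\vf(\eta)|\le 1$ (Lemma \ref{lem:elem}(iii)). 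This handles the "product minus value at the sum" piece. Then I would pass from $\vf(\xi^+)\vf(\xi^-)-\vf(\xi^+ + \xi^-)$ to $\vf(\xi^+)\vf(\xi^-)-\vf(\xi)$, which is the same thing since $\xi^+ + \xi^- = \xi$.

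Concretely, first I write, from \rf{ineq:2} with the substitution $\eta\mapsto\xi^+$, $\xi\mapsto\xi^-$ (note $\vf(0)=1$ so the hypothesis is met),
\[
|\vf(\xi^+)\vf(\xi^-)-\vf(\xi)|^2 \le \big(1-|\vf(\xi^+)|^2\big)\big(1-|\vf(\xi^-)|^2\big).
\]
Next, for any $\eta$, since $0\le |\vf(\eta)|\le 1$,
\[
1-|\vf(\eta)|^2 = \big(1-|\vf(\eta)|\big)\big(1+|\vf(\eta)|\big)\le 2\big(1-|\vf(\eta)|\big)\le 2\,|1-\vf(\eta)|\le 2|\eta|^\alpha\|\vf-1\|_\alpha,
\]
where I used $1-|\vf(\eta)| = |1|-|\vf(\eta)| \le |1-\vf(\eta)|$ and the definition \rf{norm}. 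Applying this with $\eta=\xi^+$ and $\eta=\xi^-$ and multiplying gives
\[
|\vf(\xi^+)\vf(\xi^-)-\vf(\xi)|^2 \le 4\,|\xi^+|^\alpha|\xi^-|^\alpha\,\|\vf-1\|_\alpha^2,
\]
and taking square roots yields \rf{vf:alpha}. So the only slightly delicate point is the inequality $1-|\vf(\eta)|\le|1-\vf(\eta)|$, which is just the reverse triangle inequality $\big||1|-|\vf(\eta)|\big|\le|1-\vf(\eta)|$ combined with $|\vf(\eta)|\le 1$, and the factor-of-two gain from $1+|\vf(\eta)|\le 2$.

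I do not expect any real obstacle here: every ingredient is in hand (the product inequality \rf{ineq:2}, the bound $|\vf|\le\vf(0)=1$, and the definition of $\|\cdot\|_\alpha$). The one thing to be careful about is keeping track of which variable plays which role in \rf{ineq:2} — it is stated as $|\vf(\xi)\vf(\eta)-\vf(\xi+\eta)|^2\le(1-|\vf(\xi)|^2)(1-|\vf(\eta)|^2)$, so the identity $\xi^++\xi^-=\xi$ from \rf{xipm1} is exactly what lets us identify the right-hand side of that inequality with $|\vf(\xi^+)\vf(\xi^-)-\vf(\xi)|^2$. An alternative, essentially equivalent route is to use \rf{ineq:1} together with $|\vf(\xi^+)\vf(\xi^-)-\vf(\xi)|\le|\vf(\xi^+)||\vf(\xi^-)-1|+|\vf(\xi^+)-1|$, but this gives the weaker power $\alpha$ instead of $\alpha/2$ on each factor and loses the crucial symmetry $|\xi^+|^{\alpha/2}|\xi^-|^{\alpha/2}$; since the later estimates (and the finiteness of $\lambda_\alpha$ under the weaker assumption \rf{non-cut}) rely on exactly this symmetric $\alpha/2$-$\alpha/2$ splitting, the argument through \rf{ineq:2} is the one to use.
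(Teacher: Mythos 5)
Your proof is correct and follows essentially the same route as the paper: both hinge on applying \rf{ineq:2} with $\xi^++\xi^-=\xi$ and then bounding $1-|\vf(\xi^\pm)|^2$ in terms of $|\xi^\pm|^\alpha\|\vf-1\|_\alpha$. The only (harmless) difference is in that last elementary step: your factorization $1-|\vf|^2=(1-|\vf|)(1+|\vf|)\le 2\,|1-\vf|$ yields the sharper constant $2$, whereas the paper uses the identity $1-|\vf|^2=(1-\vf)(1+\overline{\vf})+2i\,\Im\vf$ together with Lemma \ref{lem:ReIm} and ends up with the constant $4$ appearing in \rf{vf:alpha}.
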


\begin{proof}
First, recall that $\vf(0)=1$.
We begin the elementary identity
\begin{equation}\label{1-vf2}
1-|\vf(\xi^+)|^2= (1-\vf(\xi^+))(1+\overline{\vf(\xi^+)})+ 2\,\Im \vf(\xi^+).
\end{equation}
Using the estimate $|1+\overline{\vf(\xi^+)}|\leq 1+|\vf(\xi^+)|\leq 2$
({\it cf.} \rf{vf:2}) and second inequality in \rf{ReIm}
we deduce from \rf{1-vf2} 
\begin{equation*}
0\leq 1-|\vf(\xi^+)|^2\leq 4|\xi^+|^\alpha\|\vf-1\|_\alpha.
\end{equation*}
Obviously, an analogous inequality holds true if we replace $\xi^+$ by $\xi^-$.
Now, applying inequality \rf{ineq:2}, we conclude
\begin{equation*}
\begin{split}
|\vf(\xi^+)\vf(\xi^-)-\vf(\xi)|&
\leq \sqrt{\big(1-|\vf(\xi^+)|^2\big)\big(1-|\vf(\xi^-)|^2\big)}\\
&\leq 4 |\xi^+|^{\alpha/2}|\xi^-|^{\alpha/2}
\|\vf-1\|_\alpha
\end{split}
\end{equation*}
for all $\xi\in \RR$.
\end{proof}

\begin{lemma}\label{lem:moment}
Let $\alpha\in [0,2]$.
Assume that $\mu$ is a probability measure on $\RR$ such that 
$\int_\RR |v|^\alpha\,\mu(dv)$ is finite.
If, moreover, $\alpha\in (1,2]$, assume that 
 $\int_{\RR}v_i \,\mu(dv)=0$ for  
$i\in \{1, 2,3\}$.
Then $\widehat \mu\in \PDa$.
\end{lemma}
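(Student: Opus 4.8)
The plan is to estimate $|\widehat\mu(\xi)-1|$ by splitting according to whether $\alpha$ is small or close to $2$, exploiting the elementary bound $|e^{-iv\cdot\xi}-1|\le 2$ together with the refined bounds available for smooth approximations of $e^{-iv\cdot\xi}$. Write
\[
\widehat\mu(\xi)-1=\int_{\RR}\big(e^{-iv\cdot\xi}-1\big)\,\mu(dv),
\]
and in the case $\alpha\in(1,2]$ subtract the (vanishing) first moment to get
\[
\widehat\mu(\xi)-1=\int_{\RR}\big(e^{-iv\cdot\xi}-1+iv\cdot\xi\big)\,\mu(dv).
\]

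The key elementary inequalities are: for $\alpha\in[0,1]$,
\[
|e^{-is}-1|\le 2^{1-\alpha}|s|^{\alpha}\qquad\text{for all }s\in\R,
\]
which follows by interpolating between $|e^{-is}-1|\le 2$ and $|e^{-is}-1|\le|s|$; and for $\alpha\in(1,2]$,
\[
|e^{-is}-1+is|\le C_\alpha|s|^{\alpha}\qquad\text{for all }s\in\R,
\]
which follows by interpolating between $|e^{-is}-1+is|\le 2|s|$ and $|e^{-is}-1+is|\le \tfrac12 s^2$. Applying these pointwise with $s=v\cdot\xi$, using $|v\cdot\xi|\le|v|\,|\xi|$, and integrating against $\mu$, I obtain in both cases
\[
|\widehat\mu(\xi)-1|\le C_\alpha|\xi|^{\alpha}\int_{\RR}|v|^{\alpha}\,\mu(dv),
\]
so that $\|\widehat\mu-1\|_\alpha\le C_\alpha\int_{\RR}|v|^{\alpha}\,\mu(dv)<\infty$. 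Since $\widehat\mu$ is automatically a characteristic function (it is the Fourier transform of the probability measure $\mu$), this shows $\widehat\mu\in\PDa$.

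The only genuinely nontrivial point is the pointwise interpolation estimates for the scalar function $s\mapsto e^{-is}-1$ (resp.\ $s\mapsto e^{-is}-1+is$). These are standard: for the first, one has the two competing bounds $|e^{-is}-1|\le|s|$ (from $|e^{-is}-1|=|\int_0^s ie^{-it}\,dt|$) and $|e^{-is}-1|\le 2$; raising the first to the power $\alpha$ and the second to the power $1-\alpha$ and multiplying gives $|e^{-is}-1|=|e^{-is}-1|^{\alpha}|e^{-is}-1|^{1-\alpha}\le |s|^{\alpha}2^{1-\alpha}$. For the second, $|e^{-is}-1+is|\le \tfrac12 s^2$ (Taylor with integral remainder) and $|e^{-is}-1+is|\le 2|s|$ combine in the same way with exponents $\alpha-1$ and $2-\alpha$. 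I expect the write-up to be short; the main care is just bookkeeping the constants and recalling that $|v\cdot\xi|^{\alpha}\le|v|^{\alpha}|\xi|^{\alpha}$ so that the $|\xi|^{\alpha}$ factors out of the integral cleanly. No serious obstacle is anticipated; this lemma is the routine ``probabilistic'' counterpart of the analytic estimates already established.
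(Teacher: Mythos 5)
Your proposal is correct and follows essentially the same route as the paper: the same splitting into the cases $\alpha\in[0,1]$ and $\alpha\in(1,2]$ with the first moment subtracted in the latter case, and the same key pointwise bound $|e^{-is}-1|\le C_\alpha|s|^{\alpha}$ (resp.\ $|e^{-is}-1+is|\le C_\alpha|s|^{\alpha}$), which the paper obtains by a scaling argument and you obtain by explicitly interpolating the two elementary estimates. The only cosmetic difference is that you track the constants $2^{1-\alpha}$ and $2^{3-2\alpha}$ explicitly, which the paper does not need.
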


\begin{proof}
Consider first $\alpha\in (0,1]$.
Using the definition of the Fourier transform 
of a probability measure $\mu(dv)$ 
 we obtain
\begin{equation}\label{mom:1}
\frac{|\widehat \mu(\xi)-1|}{|\xi|^\alpha}
\leq \int_{\R^3}
\frac{|e^{-i v\cdot\xi}-1|}{|\xi|^\alpha}\,\mu(dv).
\end{equation}
Note now the  by substituting $\xi=\eta/|v|$, we have
$$\sup_{\xi\in\R^3}\frac{|e^{-iv\cdot\xi}-1|}{|\xi|^{\alpha}}
=|v|^\alpha \sup_{\eta\in\R^3}
\frac{|e^{-i\eta\cdot v/|v|}-1|}{|\eta|^{\alpha}}\leq C|v|^\alpha,
$$ 
where, in view of the elementary inequality 
$|e^{is}-1|\leq |s|$ for all $s\in\R$, 
 the constant $C=\sup_{v,\eta\in \R^3} |e^{-i\eta \cdot v/|v|}-1||\eta|^{-
\alpha}$ is finite for $\alpha\in (0,1]$.
Hence, we  deduce
from \rf{mom:1} that 
$$
\|\widehat \mu-1\|_\alpha \leq C\int_{\R^3}|v|^\alpha \,\mu(dv),
$$

For $\alpha\in (1,2]$, one should proceed analogously using the following 
counterpart of inequality 
\rf{mom:1}
\begin{equation*}
\frac{|\widehat \mu(\xi)-1|}{|\xi|^\alpha}\leq \int_{\R^3}
\left|\frac{e^{-iv\cdot \xi}+iv\cdot \xi-1}{|\xi|^\alpha}\right|\,\mu(dv).
\end{equation*}
being the simple consequence of the additional assumption 
$\int_{\R^3}v_i 
\,\mu(dv)=0$, for every
$i=\{1,2,3\}$.
\end{proof}

\begin{rmk}\label{rem:counter}
Let us  provide a counterexample 
that the reverse implication in Lemma \ref{lem:moment} for $\alpha\in (0,2)$ 
is not true, in other 
words,
we want to show that the space $\PDa$ is bigger than the space of 
of characteristic functions corresponding to
probability 
measures with finite moments of order $\alpha.$
Is is well-known that the function 
$\vf_\alpha(\xi)=e^{-|\xi|^\alpha},$ with $\alpha\in (0,2)$,  
is the Fourier transform of 
the probability density $P_\alpha(x)$ 
of the $\alpha$-stable symmetric L\'evy process,
(see Example \ref{exa}).
Obviously, we have $\vf_\alpha\in\PDa$. 
On the other hand, it is  known that for every 
$\alpha\in (0,2)$ the function
$P_\alpha$
is  smooth, nonnegative, and
satisfies the estimate
$
0<P_\alpha(x)\le C(1+|x|)^{-(\alpha+n)} 
$
for a constant $C$ and all $x\in\R^n$.
Moreover,
\begin{equation}\label{as P}
\frac{P_\alpha(x)}{|x|^{\alpha+n}}\to c_0
\quad\hbox{when}\quad  |x|\to\infty,
\end{equation}
where
$ c_0=\alpha2^{\alpha-1}\pi^{-(n+2)/2}\sin
(\alpha\pi/2)\Gamma\Bigl(\frac{\alpha+n}{2}\Bigr)
\Gamma\Bigl(\frac{\alpha}{2}\Bigr).$
We refer the reader to~\cite{BluG60} for a proof of the  formula
(\ref{as P}) with the explicit constant $c_0$.

In view of the limit relation \rf{as P}, 
we have  $\int_\RR P_\alpha(x)|x|^\alpha\,dx=\infty$.
\end{rmk}



\section{Existence under  cut-off assumption}\label{sec:exist}

In this section, we  
construct solutions of
 the initial value problem 
\rf{Feq}--\rf{Fini} and we study their stability 
 in the space $\PDa$ 
 imposing the usual cut-off assumption on the
collision kernel $\B$ 
(the pseudo-Maxwellian gas), say :
\begin{equation}\label{cut-off}
 \int_{S^2} \B\left(\frac{\xi\cdot \sigma}{ |\xi|}\right)\dn
\quad \mbox{is finite for all}\quad \xi\in \RR\setminus \{0\}.
\end{equation}
In the next section, we show how  to relax this condition.

\subsection{Technical results on the collision kernel}

Let us first introduce parameters  which appear 
systematically in our reasoning below.

\begin{lemma} \label{lem:gamma}
Let $\alpha\in [0,2]$ and $\B\in L^1(-1,1)$. 
Then for all $\xi\in \RR\setminus\{0\}$
the following quantity
\begin{equation}\label{gamma}
\gamma_\alpha \equiv
\int_{S^2} 
\B\left(\frac{\xi\cdot \sigma}{|\xi|}\right) \frac{|\xi^{-
}|^\alpha+|\xi^{+}|^\alpha}{|\xi|^\alpha}\dn
\end{equation}
is finite and independent of $\xi$. Moreover,
\begin{equation}\label{a>b}
\gamma_\alpha>\gamma_2=2\pi\int_{-1}^1 \B(s)\,ds \quad \mbox{if}\quad 0<\alpha< 
2.
\end{equation}
\end{lemma}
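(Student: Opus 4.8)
The plan is to compute $\gamma_\alpha$ explicitly by passing to spherical coordinates adapted to $\xi$, and to exploit the relations \rf{xipm2} that express $|\xi^+|^2$ and $|\xi^-|^2$ purely in terms of $|\xi|^2$ and the scalar product $\frac{\xi}{|\xi|}\cdot\sigma$. First I would fix $\xi\neq 0$ and parametrize $\sigma\in S^2$ by the angle $\theta$ between $\sigma$ and $\xi/|\xi|$, writing $s=\cos\theta=\frac{\xi}{|\xi|}\cdot\sigma$, so that $d\sigma=\sin\theta\,d\theta\,d\phi$ over $\theta\in[0,\pi]$, $\phi\in[0,2\pi)$. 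Since the integrand depends on $\sigma$ only through $s$, the $\phi$-integration contributes a factor $2\pi$, and the substitution $s=\cos\theta$ turns $\sin\theta\,d\theta$ into $-ds$ over $s\in[-1,1]$. Using \rf{xipm2} in the form
\begin{equation*}
\frac{|\xi^+|^\alpha}{|\xi|^\alpha}=\Big(\tfrac{1+s}{2}\Big)^{\alpha/2},
\qquad
\frac{|\xi^-|^\alpha}{|\xi|^\alpha}=\Big(\tfrac{1-s}{2}\Big)^{\alpha/2},
\end{equation*}
I obtain
\begin{equation}\label{gamma-formula}
\gamma_\alpha=2\pi\int_{-1}^{1}\B(s)\left[\Big(\tfrac{1+s}{2}\Big)^{\alpha/2}+\Big(\tfrac{1-s}{2}\Big)^{\alpha/2}\right]ds.
\end{equation}
This expression is manifestly independent of $\xi$, and it is finite because $\B\in L^1(-1,1)$ while the bracketed factor is bounded by $2$ on $[-1,1]$ (indeed, each term is at most $1$). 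This establishes the first assertion of the lemma.

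For the monotonicity claim \rf{a>b}, I would compare the bracketed factor in \rf{gamma-formula} pointwise for $0<\alpha<2$ against its value at $\alpha=2$, which is $\frac{1+s}{2}+\frac{1-s}{2}=1$, giving $\gamma_2=2\pi\int_{-1}^1\B(s)\,ds$. The key elementary inequality is that for $a,b\geq0$ with $a+b=1$ and $0<\beta<1$ one has $a^\beta+b^\beta\geq a+b=1$, with equality precisely when $\{a,b\}=\{0,1\}$; applying this with $\beta=\alpha/2$, $a=\frac{1+s}{2}$, $b=\frac{1-s}{2}$ shows that the bracket in \rf{gamma-formula} is $\geq1$ for all $s\in[-1,1]$, and is strictly greater than $1$ for all $s\in(-1,1)$. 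Since $\B$ is a nonnegative function that is not a.e.\ zero on $(-1,1)$ (otherwise $\gamma_2=0$ and there is nothing of physical interest; in any case one may note that $\B\ge0$ forces $\gamma_\alpha\ge\gamma_2$ always, and strict inequality follows as soon as $\B$ has positive mass away from the endpoints, which holds for any kernel of interest here), integrating the strict pointwise inequality against $\B(s)\,ds$ yields $\gamma_\alpha>\gamma_2$.

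The one point requiring a little care — and the main (mild) obstacle — is the strictness in \rf{a>b}: the pointwise inequality degenerates to equality at the endpoints $s=\pm1$, so I must ensure that $\B$ is not concentrated there. For a singular kernel behaving like $(1-s)^{-1-\nu}$ near $s=1$, the mass near the singular endpoint is large, but it is spread over a neighborhood, not a point; more to the point, $\B$ being an $L^1$ function means $\int_{-1+\epsilon}^{1-\epsilon}\B(s)\,ds>0$ for small $\epsilon$ unless $\B\equiv0$ a.e. Thus on the set $(-1+\epsilon,1-\epsilon)$, where the bracket exceeds $1$ by a fixed positive amount, the integral of $\B$ times that excess is strictly positive, giving $\gamma_\alpha-\gamma_2>0$. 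I would phrase this cleanly by writing $\gamma_\alpha-\gamma_2=2\pi\int_{-1}^1\B(s)\big[(\tfrac{1+s}{2})^{\alpha/2}+(\tfrac{1-s}{2})^{\alpha/2}-1\big]ds$, observing the integrand is nonnegative and strictly positive on $(-1,1)$, and concluding it is strictly positive unless $\B=0$ a.e.
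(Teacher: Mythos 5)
Your proof is correct and follows essentially the same route as the paper: reduce the sphere integral to $2\pi\int_{-1}^1$ via the substitution $s=\xi\cdot\sigma/|\xi|$, use \rf{xipm2} to get the explicit formula for $\gamma_\alpha$, and invoke the elementary inequality $\left(\frac{1+s}{2}\right)^{\alpha/2}+\left(\frac{1-s}{2}\right)^{\alpha/2}>1$ on $(-1,1)$. Your extra remark that strictness in \rf{a>b} requires $\B$ not to vanish a.e.\ on $(-1,1)$ is a legitimate (if minor) point that the paper passes over silently, but it does not change the argument.
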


\begin{proof} 
Let $\sigma=(\sigma_1,\sigma_2,\sigma_3)\in S^2$. 
Rotating $\RR$ (if necessary) and using
the spherical coordinates we obtain  the equalities
\begin{equation}\label{g}
\int_{S^2} g\left(\frac{\xi\cdot \sigma}{|\xi|}\right)\dn
=\int_{S^2} g(\sigma_3)\dn= 2\pi \int_{-1}^1 g(s)\,ds,
\end{equation}
valid for every $g\in L^1(-1,1)$ 
and $\xi \in \RR\setminus \{0\}$. Hence, 
by \rf{g} with $g=\B$,
recalling  relations
\rf{xipm1}  we have
$\gamma_2=2\pi\int_{-1}^1 \B(s)\,ds$.

For $0<\alpha< 2$, we rewrite equalities \rf{xipm2} as follows
\begin{equation}\label{xipm:alpha}
|\xi^+|^\alpha=|\xi|^\alpha \left(\frac{1+\frac{\xi}{|\xi|}\cdot 
\sigma}2\right)^{\alpha/2}
\quad \mbox{and}\quad 
|\xi^-|^\alpha=|\xi|^\alpha\left(\frac{1-\frac{\xi}{|\xi|}\cdot 
\sigma}2\right)^{\alpha/2}
\end{equation}
and we use equality \rf{g} with 
$g(s)= \B(s)\left[\left(\frac{1+s}2\right)^{\alpha/2}+
\left(\frac{1-s}2\right)^{\alpha/2}\right]$ to obtain
\begin{equation}\label{gamma2}
\gamma_\alpha=2\pi \int_{-1}^1\B(s)\left[\left(\frac{1+s}2\right)^{\alpha/2}+
\left(\frac{1-s}2\right)^{\alpha/2}\right]\,ds.
\end{equation}
The integral on the right hand side of \rf{gamma2} is finite because the
function in the brackets is bounded for $s\in [-1,1]$.

In order to show that $\gamma_\alpha>\gamma_2$, whenever $0<\alpha<2$, it 
suffices
to use the elementary inequality
$$
\left(\frac{1+s}2\right)^{\alpha/2}+
\left(\frac{1-s}2\right)^{\alpha/2}>1
$$
which is valid for all $s\in (-1,1)$.
\end{proof}

\begin{corollary} \label{cor:la}
Let $\alpha\in [0,2]$. Assume that the function $(1-s^2)^{\alpha/2} \B(s)$ is 
integrable on
$[-1,1]$. 
For every $\xi \in \RR\setminus \{0\}$ the following quantity
\begin{equation}
\lambda_\alpha \equiv
 \int_{S^{2}} \B\left(\frac{\xi\cdot \sigma}{|\xi|}\right) 
\left( \frac{|\xi^-|^\alpha +|\xi^+|^\alpha}{|\xi|^\alpha }-1\right)\dn
\label{la}
\end{equation}
is finite, independent of $\xi$, and positive provided $0<\alpha<2$.
\end{corollary}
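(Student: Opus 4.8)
The plan is to reduce everything to the previous lemma. The integrand in \rf{la} is the integrand of \rf{gamma} minus $\B(\xi\cdot\sigma/|\xi|)$, so formally $\lambda_\alpha = \gamma_\alpha - \gamma_2/(2\pi)\cdot 2\pi = \gamma_\alpha - 2\pi\int_{-1}^1\B(s)\,ds$; the subtlety is that Lemma \ref{lem:gamma} assumes $\B\in L^1(-1,1)$, whereas here we only have the weaker hypothesis $(1-s^2)^{\alpha/2}\B(s)\in L^1(-1,1)$, which permits a non-integrable singularity of $\B$ at $s=\pm1$ when $\alpha>0$. So the point is that the singularities of $\gamma_\alpha$ and of $\gamma_2$ cancel in the difference $\lambda_\alpha$, leaving a finite quantity. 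First I would use the rotation-and-spherical-coordinates identity \rf{g} (which holds for any $g\in L^1(-1,1)$, and the relevant $g$ here \emph{is} integrable) to write
\begin{equation*}
\lambda_\alpha = 2\pi\int_{-1}^1 \B(s)\left[\left(\tfrac{1+s}{2}\right)^{\alpha/2}+\left(\tfrac{1-s}{2}\right)^{\alpha/2}-1\right]ds,
\end{equation*}
exactly as in the derivation of \rf{gamma2}, using \rf{xipm:alpha} together with the relation $\xi^++\xi^-=\xi$ that accounts for the $-1$.

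Next I would check that this integral is absolutely convergent under the stated hypothesis. The bracketed function $h(s)=\left(\tfrac{1+s}{2}\right)^{\alpha/2}+\left(\tfrac{1-s}{2}\right)^{\alpha/2}-1$ is continuous and bounded on $[-1,1]$, and as $s\to 1$ one has $h(s)\to \left(\tfrac{1+1}{2}\right)^{\alpha/2}-1+0 = 1-1=0$; more precisely $h(s)=O\big((1-s)^{\alpha/2}\big)$ near $s=1$ and symmetrically $h(s)=O\big((1+s)^{\alpha/2}\big)$ near $s=-1$, so $|\B(s)h(s)|\leq C(1-s^2)^{\alpha/2}|\B(s)|$ on $[-1,1]$, which is integrable by assumption. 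Hence $\lambda_\alpha$ is finite. Independence of $\xi$ is immediate from the formula, since after applying \rf{g} there is no $\xi$ left.

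For positivity when $0<\alpha<2$, I would invoke the strict elementary inequality $\left(\tfrac{1+s}{2}\right)^{\alpha/2}+\left(\tfrac{1-s}{2}\right)^{\alpha/2}>1$ for all $s\in(-1,1)$ already used in Lemma \ref{lem:gamma} (it follows from strict concavity of $x\mapsto x^{\alpha/2}$ on $[0,\infty)$ for $\alpha/2<1$, applied to $x=(1\pm s)/2$, or simply from each summand being $>$ its value at the endpoint). Thus $h(s)>0$ on $(-1,1)$, $\B\geq 0$, and $\B$ is not identically zero on a set of positive measure (otherwise $\gamma_2=0$ and the collision operator is trivial), so the integral is strictly positive. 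The main obstacle — really the only thing requiring care — is the near-endpoint estimate $h(s)=O\big((1-s^2)^{\alpha/2}\big)$ that makes the weakened integrability hypothesis suffice; everything else is a direct transcription of the proof of Lemma \ref{lem:gamma}.
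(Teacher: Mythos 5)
Your proposal is correct and follows essentially the same route as the paper: reduce to the one-dimensional integral via the change of variables \rf{g}, and observe that the bracketed function is comparable to $(1-s^2)^{\alpha/2}$ near $s=\pm1$, which is exactly the bound $0\leq \B(s)\bigl[\bigl(\tfrac{1+s}{2}\bigr)^{\alpha/2}+\bigl(\tfrac{1-s}{2}\bigr)^{\alpha/2}-1\bigr]\leq C\B(s)(1-s^2)^{\alpha/2}$ used in the paper (and made precise in Remark \ref{rem:comp}). Your endpoint asymptotics and the strict-positivity argument via concavity match the paper's; the only addition is your explicit remark that $\B$ must not vanish a.e., which the paper leaves implicit.
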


\begin{proof}
If $\B\in L^1(-1,1)$, this is the immediate consequence of Lemma 
\ref{lem:gamma}.
To handle more general $\B$ it suffices to 
apply identities \rf{xipm:alpha} together  
with
the change of variables from \rf{g} with the function
$g=g(s)$ satisfying 
\begin{equation}\label{gB}
0\leq g(s)\equiv \B(s)\left(\left(\frac{1+s}2\right)^{\alpha/2}+
\left(\frac{1-s}2\right)^{\alpha/2}-1\right) \leq C \B(s) (1-s^2)^{\alpha/2}
\end{equation}
for every $\alpha \in (0,2)$, a constant $C(\alpha)>0$, 
and all $s\in [-1,1]$ (see also Remark \ref{rem:comp} below).
\end{proof}

\begin{rmk}\label{rem:comp}
We leave for the reader to check that 
$$
\lim_{s\to \pm 1} \frac{\left(\frac{1+s}2\right)^{\alpha/2}+
\left(\frac{1-s}2\right)^{\alpha/2}-1}{(1-s^2)^{\alpha/2}}=1
$$
provided $\alpha\in (0,2)$. Hence, both functions in the numerator and 
the denominator are comparable in the sense that there are two  
positive constants $C_1,C_2$ such that 
$$
 C_1 (1-s^2)^{\alpha/2}\leq
\left(\frac{1+s}2\right)^{\alpha/2}+
\left(\frac{1-s}2\right)^{\alpha/2}-1 \leq C_2 (1-s^2)^{\alpha/2}.
$$
By this reason, we prefer to use the estimate from \rf{gB} to keep
the assumption on the collision kernel $\B$ from Corollary \ref{cor:la}
comparable with our standing assumption \rf{non-cut}. 
\end{rmk}

\begin{rmk}
In the following, we systematically use the identity 
$\lambda_\alpha=\gamma_\alpha-\gamma_2$ valid for any collision kernel
$\B\in L^1(-1,1)$.
\end{rmk}


\subsection{Construction of solutions}

Now, we are going to construct solutions of problem \rf{Feq}--\rf{Fini}
under the cut-off assumption \rf{cut-off} using the Banach contraction 
principle. The following theorem is a particular case of Theorem 
\ref{thm:exist} under the assumption that $\B$ satisfies \rf{non-cut} with 
$\alpha_0=0$.

\begin{theorem}\label{thm:exist:cut}
Let $\alpha\in [0,2]$ and  $\B\in L^1(-1,1)$.
 For every initial condition  $\vf_0\in \PDa$ there exists 
a unique  classical solution of problem
\rf{Feq}--\rf{Fini} satisfying 
$\vf\in \Xa\equiv C([0,\infty), \PDa)$ 
\end{theorem}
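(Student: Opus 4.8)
The plan is to solve the integrated (mild) form of \rf{Feq}--\rf{Fini} by the Banach fixed point theorem on the complete metric space $\XaT \equiv C([0,T],\PDa)$ for suitably small $T>0$, and then extend the solution globally using the fact that the relevant norms cannot blow up in finite time. Under the cut-off assumption \rf{cut-off}, the quantity $\gamma_0 = \int_{S^2}\B(\xi\cdot\sigma/|\xi|)\dn$ is finite and, by Lemma \ref{lem:gamma} with $\alpha=0$, independent of $\xi$. Rewriting \rf{Feq} by adding and subtracting $\gamma_0\vf$, I would recast the equation as
\begin{equation*}
\partial_t\vf(\xi,t) + \gamma_0\vf(\xi,t) = \int_{S^2}\B\left(\frac{\xi\cdot\sigma}{|\xi|}\right)\vf(\xi^+,t)\vf(\xi^-,t)\dn,
\end{equation*}
so that, since $\vf(0,t)=1$ for characteristic functions, Duhamel's formula gives the fixed point operator
\begin{equation*}
\mathcal{N}[\vf](\xi,t) = e^{-\gamma_0 t}\vf_0(\xi) + \int_0^t e^{-\gamma_0(t-\tau)}\int_{S^2}\B\left(\frac{\xi\cdot\sigma}{|\xi|}\right)\vf(\xi^+,\tau)\vf(\xi^-,\tau)\dn\,d\tau.
\end{equation*}

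The first step is to check that $\mathcal{N}$ maps $\XaT$ into itself. That $\mathcal{N}[\vf](\cdot,t)$ is again a characteristic function for each $t$ follows from the structural lemmas already proved: products of positive definite functions are positive definite (the product lemma), linear combinations with positive coefficients and pointwise limits of positive definite functions are positive definite (Lemma \ref{lem:comb}), so the integral over $S^2$ and the time integral preserve positive definiteness; continuity and the value $1$ at $\xi=0$ are easily checked, so Bochner's Theorem \ref{thm:Bochner} applies. For the quantitative bound $\|\mathcal{N}[\vf]-1\|_\alpha<\infty$, I would write $\mathcal{N}[\vf]-1$ using $1 = e^{-\gamma_0 t} + \int_0^t e^{-\gamma_0(t-\tau)}\gamma_0\,d\tau$ and $\gamma_0 = \int_{S^2}\B(\xi\cdot\sigma/|\xi|)\dn$, obtaining
\begin{equation*}
\mathcal{N}[\vf](\xi,t)-1 = e^{-\gamma_0 t}(\vf_0(\xi)-1) + \int_0^t e^{-\gamma_0(t-\tau)}\int_{S^2}\B\left(\frac{\xi\cdot\sigma}{|\xi|}\right)\big(\vf(\xi^+,\tau)\vf(\xi^-,\tau)-\vf(\xi,\tau)\big)\dn\,d\tau;
\end{equation*}
then the key inequality \rf{vf:alpha} from Lemma \ref{lem:in:c} bounds the integrand by $4|\xi^+|^{\alpha/2}|\xi^-|^{\alpha/2}\|\vf(\tau)-1\|_\alpha$, and using $|\xi^+|^{\alpha/2}|\xi^-|^{\alpha/2}\le \tfrac12(|\xi^+|^\alpha+|\xi^-|^\alpha)$ together with the change of variables \rf{g} shows $\int_{S^2}\B(\cdots)|\xi^+|^{\alpha/2}|\xi^-|^{\alpha/2}\dn \le 2\gamma_\alpha|\xi|^\alpha$ (with $\gamma_\alpha$ from Lemma \ref{lem:gamma}), giving a closed estimate $\|\mathcal{N}[\vf](t)-1\|_\alpha \le e^{-\gamma_0 t}\|\vf_0-1\|_\alpha + C\gamma_\alpha\int_0^t\|\vf(\tau)-1\|_\alpha\,d\tau$.

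The second step is the contraction estimate. For $\vf,\wvf\in\XaT$ I would estimate $\mathcal{N}[\vf]-\mathcal{N}[\wvf]$ by the same Duhamel representation, using the bilinear identity $\vf(\xi^+)\vf(\xi^-)-\wvf(\xi^+)\wvf(\xi^-) = (\vf(\xi^+)-\wvf(\xi^+))\vf(\xi^-) + \wvf(\xi^+)(\vf(\xi^-)-\wvf(\xi^-))$. Since $|\vf|,|\wvf|\le 1$ (Lemma \ref{lem:elem}(iii)), each term is bounded by $|\xi^\pm|^\alpha$ times $\|\vf-\wvf\|_\alpha$, wait — more carefully, one needs the factor to be $|\xi^+|^{\alpha/2}|\xi^-|^{\alpha/2}$ to recover the right homogeneity, which one gets by noting $|\vf(\xi^+,\tau)-\wvf(\xi^+,\tau)| \le |\xi^+|^\alpha\|\vf(\tau)-\wvf(\tau)\|_\alpha$ and bounding the remaining factor $\le 1$ by Lemma \ref{lem:elem}(iii), then symmetrizing; the $S^2$ integral of $\B(\cdots)|\xi^+|^\alpha$ is $\gamma_\alpha|\xi|^\alpha$ up to the $\gamma_2$ correction by \rf{g}. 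This yields $\|\mathcal{N}[\vf](t)-\mathcal{N}[\wvf](t)\|_\alpha \le C\gamma_\alpha\int_0^t\|\vf(\tau)-\wvf(\tau)\|_\alpha\,d\tau \le C\gamma_\alpha T\sup_{[0,T]}\|\vf(\tau)-\wvf(\tau)\|_\alpha$, so $\mathcal{N}$ is a contraction for $T < (C\gamma_\alpha)^{-1}$. The fixed point is the unique mild solution on $[0,T]$; bootstrapping from the equation shows $\partial_t\vf$ exists and is continuous, so it is a classical solution.

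The final step is global continuation. Since the local existence time $T$ depends only on $\gamma_\alpha$ and not on the data, and since any solution stays in $\PDa$ with $\|\vf(t)-1\|_\alpha$ controlled on bounded intervals by the Grönwall estimate from step one, the solution extends to all of $[0,\infty)$ by iterating the local construction. Uniqueness on $[0,\infty)$ follows because two solutions agreeing at time $0$ satisfy, by the step-two estimate, a Grönwall inequality forcing $\|\vf(t)-\wvf(t)\|_\alpha=0$ for all $t$. The main obstacle, and the place where the cut-off hypothesis is genuinely used, is ensuring the nonlinear term is finite and well-defined pointwise: this is exactly what Lemma \ref{lem:in:c} provides, converting the potentially singular kernel integral into the convergent quantity $\gamma_\alpha$ — without \rf{cut-off} one only controls $\gamma_\alpha<\infty$, and $e^{-\gamma_0 t}$ would not make sense, which is why the non-cut-off case \rf{non-cut} is deferred to the approximation argument of the next section.
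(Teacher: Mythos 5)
Your proposal is correct and follows essentially the same route as the paper: the Duhamel reformulation $\partial_t\vf+\gamma_2\vf=\G(\vf)$, a Banach fixed point in $C([0,T],\PDa)$ with positive definiteness preserved via Lemma \ref{lem:comb} and Bochner's theorem, a contraction constant $\ga T$, and global continuation because $T$ is independent of the data. One small slip: your displayed identity for $\mathcal{N}[\vf]-1$ omits the term $\gamma_0\int_0^te^{-\gamma_0(t-\tau)}\big(\vf(\xi,\tau)-1\big)\,d\tau$ (the correct subtraction is $\vf(\xi^+)\vf(\xi^-)-1$, not $\vf(\xi^+)\vf(\xi^-)-\vf(\xi)$), but that term is itself bounded by $\gamma_0\|\vf(\tau)-1\|_\alpha|\xi|^\alpha$, so the closed estimate and the rest of the argument are unaffected.
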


In the proof of Theorem \ref{thm:exist:cut},  we use 
 the following nonlinear operator
\begin{equation}\label{G}
\G(\vf)(\xi)\equiv 
\int_{S^2} \B
\left(\frac{\xi\cdot \sigma}{|\xi|}\right)
\vf(\xi^+)\vf(\xi^-)\dn
\end{equation}
where $\xi^+$ and $\xi^-$ are defined in \rf{xipm}. Hence, under 
the cut-off assumption \rf{cut-off},  for the constant 
$\g=2\pi\int_{-1}^1\B(s)\,ds$ ({\it cf.}~Lemma \ref{lem:gamma}), 
and for $\vf$ satisfying $\vf(0,t)=1$ for all $t\geq 0$, 
we  write equation \rf{Feq} in the following form
\begin{equation}\label{Feq:s}
\partial_t \vf+\g\vf =\G(\vf).
\end{equation}
Next, multiplying \rf{Feq:s} by $e^{\g t}$ and integrating with respect to $t$ 
we obtain the following equivalent formulation of problem
\rf{Feq}--\rf{Fini}
\begin{equation}\label{duhamel}
\vf(\xi,t)=\vf_0(\xi)e^{-\g t}+\int_0^t e^{-\g (t-\tau)} 
\G(\vf(\cdot,\tau))(\xi)\,d\tau.
\end{equation}

\begin{lemma} \label{lem:G:est}
Let $\alpha\in [0,2]$ and assume \rf{cut-off}.
For every $\vf \in\PDa$, the function $\G(\vf)$ is  continuous and 
positive definite.
Moreover, for the constant $\ga$ defined in \rf{gamma}, we have
\begin{equation}\label{G:est}
|\G(\vf)(\xi)-\G(\wvf)(\xi)|\leq \ga \|\vf-\wvf\|_\alpha|\xi|^\alpha
\end{equation}
for all $\vf,\wvf\in \PDa$ and all $\xi\in \RR\setminus \{0\}$.
\end{lemma}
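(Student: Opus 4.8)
The plan is to prove the three assertions separately, in increasing order of difficulty — the Lipschitz bound \rf{G:est}, then the continuity of $\G(\vf)$, and finally its positive definiteness, the last being the only step that really uses the collisional structure; everything else is bookkeeping with inequalities already at hand. For \rf{G:est} I would work under the integral sign: with the algebraic identity $ab-cd=a(b-d)+d(a-c)$ applied to $a=\vf(\xi^+)$, $b=\vf(\xi^-)$, $c=\wvf(\xi^+)$, $d=\wvf(\xi^-)$, and using $|\vf(\xi^+)|\le 1$, $|\wvf(\xi^-)|\le 1$ from Lemma~\ref{lem:elem}(iii),
\[
\big|\vf(\xi^+)\vf(\xi^-)-\wvf(\xi^+)\wvf(\xi^-)\big|
\le \big|\vf(\xi^+)-\wvf(\xi^+)\big|+\big|\vf(\xi^-)-\wvf(\xi^-)\big|
\le \|\vf-\wvf\|_\alpha\big(|\xi^+|^\alpha+|\xi^-|^\alpha\big).
\]
Multiplying by $\B(\xi\cdot\sigma/|\xi|)\ge 0$, integrating over $S^2$ and recognising the constant $\ga=\int_{S^2}\B(\xi\cdot\sigma/|\xi|)\big(|\xi^+|^\alpha+|\xi^-|^\alpha\big)|\xi|^{-\alpha}\dn$ from \rf{gamma} — finite and independent of $\xi$ by Lemma~\ref{lem:gamma}, since \rf{cut-off} is exactly $\B\in L^1(-1,1)$ — gives \rf{G:est} immediately; taking $\wvf\equiv 1$ shows in passing that $\G(\vf)(\xi)$ is finite for every $\xi\neq 0$.

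For continuity I would argue as in \rf{g}: fixing $\xi\neq 0$ and choosing a rotation $R=R(\xi)$, depending continuously on $\xi$ near any given point, with $R\,(\xi/|\xi|)=(0,0,1)$, the substitution $\sigma=R^{-1}\tau$ turns \rf{G} into
\[
\G(\vf)(\xi)=\int_{S^2}\B(\tau_3)\,
\vf\!\Big(|\xi|\,R^{-1}\tfrac{(0,0,1)+\tau}{2}\Big)\,
\vf\!\Big(|\xi|\,R^{-1}\tfrac{(0,0,1)-\tau}{2}\Big)\,d\tau .
\]
Here the weight $\B(\tau_3)$ no longer depends on $\xi$ and lies in $L^1(S^2)$ by \rf{cut-off}, while the remaining factor is continuous in $\xi$ on $\RR\setminus\{0\}$ and bounded by $1$; dominated convergence then yields continuity away from the origin. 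As $\xi\to 0$ both arguments of $\vf$ tend to $0$, so $\vf(\cdot)\to\vf(0)=1$ and, again by dominated convergence, $\G(\vf)(\xi)\to\int_{S^2}\B(\tau_3)\,d\tau=\g$; hence $\G(\vf)$ extends to a continuous function on all of $\RR$ with $\G(\vf)(0)=\g$ — the value that makes \rf{Feq:s}, \rf{duhamel} consistent at $\xi=0$.

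The heart of the matter is positive definiteness of $\G(\vf)$, and the main obstacle is that for a \emph{fixed} $\sigma$ the map $\xi\mapsto\vf(\xi^+)\vf(\xi^-)$ need not be positive definite: already $\xi\mapsto\vf(\xi^+)$ fails when $\vf$ is a Gaussian, since it equals $1$ along the ray $\{-t\sigma:t>0\}$ yet is non-constant in the direction of $\sigma$, which no positive definite function with value $1$ at the origin can be. Thus a Riemann-sum argument through Lemma~\ref{lem:comb} is unavailable and the averaging in $\sigma$ is genuinely needed. Instead I would invoke the probabilistic content of Bobylev's Fourier formula for the collision operator \cite{B1,B2}, \cite[Appendix]{ADVW}: by the Bochner theorem (Theorem~\ref{thm:Bochner}) one has $\vf=\widehat\mu$ for a probability measure $\mu$, and $\tfrac1\g\G(\vf)$ is the characteristic function of the law $\nu$ of
\[
W=\tfrac{X_1+X_2}{2}+\tfrac{|X_1-X_2|}{2}\,N,
\]
where $X_1,X_2$ are independent with common law $\mu$ and, conditionally on $(X_1,X_2)$ with $X_1\neq X_2$, the direction $N\in S^2$ is drawn from the probability law $\tfrac1\g\,\B\big(\tfrac{X_1-X_2}{|X_1-X_2|}\cdot\sigma\big)\dn$ (its total mass is $\g$ by \rf{g}, whatever the direction $\tfrac{X_1-X_2}{|X_1-X_2|}$). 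Evaluating $\widehat\nu(\xi)=\mathbb E\,e^{-iW\cdot\xi}$ and performing the change of variables that produces the Bobylev form of $\widehat Q$ identifies $\widehat\nu$ with $\tfrac1\g\G(\vf)$, so $\G(\vf)=\g\,\widehat\nu$ is positive definite (this also re-proves continuity and $\G(\vf)(0)=\g$ via Bochner). When $\mu$ is not absolutely continuous one first applies this to the mollified characteristic functions $\vf_\varepsilon=\vf\,\widehat M_\varepsilon$ with $\widehat M_\varepsilon(\xi)=e^{-\varepsilon|\xi|^2}$ — whose measures $\mu*M_\varepsilon$ have densities, so that the classical fact that the normalised gain operator maps probability densities to probability densities \cite{C,V} applies directly — and then lets $\varepsilon\to 0$, using $\G(\vf_\varepsilon)\to\G(\vf)$ pointwise (dominated convergence in $\sigma$, the integrand being bounded by $\B(\xi\cdot\sigma/|\xi|)$) together with the closedness of positive definite functions under pointwise convergence (Lemma~\ref{lem:comb}). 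This identification of $\tfrac1\g\G$ as a self-map of the cone of characteristic functions is the one place where real input is required; the rest is routine.
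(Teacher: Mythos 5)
Your proof of estimate \rf{G:est} is exactly the paper's: the same bilinear splitting $ab-cd=a(b-d)+d(a-c)$, the bounds $|\vf|\le 1$, $|\wvf|\le 1$, and the identification of the constant $\ga$ from \rf{gamma} via Lemma \ref{lem:gamma}. For the continuity and positive definiteness of $\G(\vf)$ the paper gives no argument at all --- it simply instructs the reader to ``follow the reasoning from \cite[Lemma 2.1]{PT96}'' --- so here you have done strictly more, and correctly. Your observation that positive definiteness cannot be obtained by freezing $\sigma$ and invoking Lemma \ref{lem:comb} on Riemann sums (since $\xi\mapsto\vf(\xi^+)$ need not be positive definite, the map $\xi\mapsto\xi^+$ being nonlinear in $\xi$) pinpoints exactly why the averaging over $\sigma$ is essential; and the argument you give --- that $\g^{-1}\G(\vf)$ is the characteristic function of the post-collisional velocity $W$ built from two independent copies of $\mu$ and a $\B$-distributed scattering direction, via the Bobylev identity --- is precisely the content of the cited Pulvirenti--Toscani lemma, so in substance the routes coincide. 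One simplification is available: the mollification by $\widehat M_\varepsilon$ and the subsequent limit are unnecessary, because the random-variable construction of $W$ makes sense for an arbitrary probability measure $\mu$ (no density is required, and on the event $X_1=X_2$ one has $W=X_1$ whatever $N$ is), so $\g^{-1}\G(\vf)$ is directly the Fourier transform of the law of $W$ and Bochner's theorem applies at once, giving continuity and $\G(\vf)(0)=\g$ as well.
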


\begin{proof}
Let $\vf\in\PDa$.
To show that $\G(\vf)$ is  continuous and 
positive definite is suffices to follow the reasoning from 
\cite[Lemma 2.1]{PT96}.

 Hence, it suffices to show estimate
\rf{G:est} from all $\vf,\wvf\in\PDa$. To do it,
using   inequalities $|\vf(\xi^-)|\leq 1$, $|\wvf(\xi^+)|\leq 1$,  
and  the definitions of the metric \rf{distance} 
as well as  of the constant $\ga$ (see \rf{gamma}), we obtain
\begin{equation}\label{G:est:1}
\begin{split}
|\G(\vf)(\xi)-\G(\wvf)(\xi)|
=&
\bigg|
\int_{S^2} \B
\left(\frac{\xi\cdot \sigma}{|\xi|}\right) 
\big[
\left(\vf(\xi^+)-\wvf(\xi^+)\right)\vf(\xi^-) \\
&\qquad\qquad\qquad\qquad 
+\wvf(\xi^+)\left(\vf(\xi^-)- \wvf(\xi^-)\right)
\big]\dn
\bigg|\\
\leq& 
\int_{S^2} \B
\left(\frac{\xi\cdot \sigma}{|\xi|}\right) 
\left(
\|\vf-\wvf\|_\alpha |\xi^+|^\alpha+\|\vf-\wvf\|_\alpha |\xi^-|^\alpha
\right)\dn\\
=&\ga\|\vf-\wvf\|_\alpha|\xi|^\alpha
\end{split}
\end{equation}
for all $\xi\in\RR$.
\end{proof}

Now, we are in a position to prove the existence of solutions to 
\rf{Feq}--\rf{Fini} in the space $\PDa$.

\begin{proof}[Proof of Theorem \ref{thm:exist:cut}]
  The solution to \rf{Feq}--\rf{Fini} is obtained as a fixed point of equation 
\rf{duhamel}
via the Banach contraction principle applied to the nonlinear operator
\begin{equation}\label{F}
\F(\vf)(\xi, t)\equiv \vf_0(\xi)e^{-\g t}
+\int_0^t e^{-\g (t-\tau)} \G(\vf(\cdot, \tau))(\xi)\,d\tau
\end{equation}
({\it cf.}~equation \rf{duhamel}).
We fix $\vf_0\in \PDa$ and, first,  
we show that the mapping $\F$  is a contraction 
on the metric space
$
\XaT=C([0,T], \PDa)
$
supplemented with the metric
$
\|\vf-\wvf\|_\XaT \equiv \sup_{\tau\in [0,T]}
\|\vf(\cdot,\tau)-\wvf(\cdot,\tau)\|_\alpha
$
provided $T>0$ is sufficiently small.

Notice that for every $\vf \in \XaT$ and for every $t\in [0,T]$, 
the function $\F(\vf)(t)$ is continuous and positive definite. 
This is the immediate consequence of Lemma \ref{lem:comb} if one approximates
the integral on the right-hand side of \rf{F} by finite sums with positive 
coefficients. Here, one should remember that $\G(\vf)(\tau)$ is 
continuous and 
positive definite for every $\tau\in [0,t]$ by Lemma \ref{lem:G:est}.

Next, for every $\vf \in \XaT$, we can rewrite equation \rf{F} as follows
$$
\F(\vf)(\xi,t)-1=(\vf_0(\xi)-1)e^{-\g t}+\int_0^t e^{-\g (t-\tau)} 
\big(\G(\vf(\cdot,\tau))(\xi)-\g\big)\,d\tau
$$
Hence, using Lemma \ref{lem:G:est} with $\wvf\equiv 1$, 
recalling the definition
of the constant $\g=\G(1)$ from \rf{gamma}, and estimating 
$e^{-\g(t-\tau)}\leq 1$ for every
$\tau \in [0,t]$, we obtain
$$
|\F(\vf)(\xi,t)-1|\leq\|\vf_0-1\|_\alpha |\xi|^\alpha 
+\ga  \int_0^t  \|\vf(\xi,\tau)-1\|_\alpha \,d\tau |\xi|^\alpha.
$$
Dividing this inequality by $|\xi|^\alpha$ and computing the supremum 
with respect to $\xi\in \RR$ and $t\in [0,T]$ we obtain that $\F:\XaT\to\XaT$ 
together with 
the estimate
$$
\|\F(\vf)-1\|_\XaT\leq \|\vf_0-1\|_\alpha + \ga T\|\vf-1\|_\XaT.
$$  

In a similar way, using Lemma \ref{lem:G:est} for every 
$\vf,\wvf \in \XaT$, we get
$$
|\F(\vf)(\xi,t)-\F(\wvf)(\xi,t)|\leq \ga T\|\vf-\wvf\|_\XaT |\xi|^\alpha,
$$
and consequently,
$$
\|\F(\vf)-\F(\wvf)\|_\XaT\leq  \ga T\|\vf-\wvf\|_\XaT.
$$  
Hence, the Banach contraction principle provides the unique solution 
(the fixed point) of equation \rf{duhamel} in the space $\XaT$ provided 
$T<1/\ga$.

Note finally that we have constructed the unique solution on $[0,T]$ where 
$T$ is independent of 
the initial condition. Hence, choosing $\vf(\xi,T)$ as the initial datum
we obtain the
unique solution on $[T,2T]$. Consequently, repeating this procedure, 
we  construct the unique solution
on any finite time interval. 
\end{proof}


\subsection{Remark on Wild's sum}
Under the cut-off assumption \rf{cut-off},
for every characteristic function $\vf_0$ as an initial datum
and for $\g=1$ in \rf{gamma} (which can be always normalized by a suitable time
rescaling of equation \rf{Feq}, {\it cf.} \cite[Sec.~2]{PT96}),
it is possible to derive the following explicit representation
of a classical solution to \rf{Feq}--\rf{Fini}
\begin{equation}\label{w:1}
\vf(\xi,t)=e^{-t}\sum_{n=0}^\infty \vf^{(n)}(\xi)(1-e^{-t})^n,
\end{equation}
where 
\begin{equation}\label{w:2}
\begin{split}
\vf^{(0)}(\xi)&= \vf_0(\xi)\\
\vf^{(n+1)}(\xi)&=\frac{1}{n+1}
\sum_{j=0}^n\widetilde\G\left(\vf^{(j)},\vf^{(n-j)}\right)(\xi)
\end{split}
\end{equation}
with the bilinear operator $\widetilde\G$ 
 of the following form
\begin{equation*}
\widetilde\G(\vf,\wvf)(\xi)\equiv 
\int_{S^2} \B
\left(\frac{\xi\cdot \sigma}{|\xi|}\right)
\vf(\xi^+)\wvf(\xi^-)\dn.
\end{equation*}
Notice that we have   $\widetilde\G(\vf,\vf)=\G(\vf)$ for every $\vf$, 
where $\G$ is defined in  \rf{G}.
This series is called Wild's sum \cite{W51}. 
The proof that it converges toward the
unique classical solution of problem \rf{Feq}--\rf{Fini} can be found {\it e.g.}
either in \cite[Thm 2.1]{PT96} or in \cite[Sect. 4]{BC02-self}.
Here, we  show that the series converges in the space $\PDa$.

\begin{theorem}\label{thm:wild}
Let $\alpha\in (0,2]$ and  $\vf_0\in\PDa$.
Assume that $\g=1$ in \rf{gamma}.
Then the series defined in \rf{w:1}--\rf{w:2} converges toward  a solution
to \rf{Feq}--\rf{Fini} which belongs to the space
$C([0,\infty),\PDa)$. 
\end{theorem}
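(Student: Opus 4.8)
\textbf{Proof proposal for Theorem \ref{thm:wild}.}

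The plan is to show that the Wild series \rf{w:1}--\rf{w:2} converges in $C([0,\infty),\PDa)$ to the classical solution already produced by the Banach fixed point argument of Theorem \ref{thm:exist:cut}. The main point to establish is a uniform bound, namely that each iterate $\vf^{(n)}$ is a characteristic function with $\|\vf^{(n)}-1\|_\alpha$ growing at most exponentially in $n$; then the prefactor $e^{-t}(1-e^{-t})^n$ will make the series converge absolutely in the $\|\cdot\|_\alpha$ metric, locally uniformly in $t$.

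First I would check, by induction on $n$, that every $\vf^{(n)}$ is a characteristic function (hence $\vf^{(n)}(0)=1$). The base case is the hypothesis $\vf_0\in\PDa$. For the inductive step, note that $\widetilde\G(\vf^{(j)},\vf^{(n-j)})(\xi)=\int_{S^2}\B(\xi\cdot\sigma/|\xi|)\vf^{(j)}(\xi^+)\vf^{(n-j)}(\xi^-)\dn$ is, after approximating the integral by finite sums with positive coefficients (the cut-off assumption \rf{cut-off} makes $\B(\xi\cdot\sigma/|\xi|)\dn$ a finite positive measure on $S^2$), a limit of positive combinations of products of characteristic functions; products of positive definite functions are positive definite (Lemma in the excerpt), positive combinations are positive definite (Lemma \ref{lem:comb}), and pointwise limits of positive definite functions are positive definite (Lemma \ref{lem:comb} again). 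Continuity is inherited as in \cite[Lemma 2.1]{PT96} or as in the proof of Lemma \ref{lem:G:est}, so by Bochner's theorem (Theorem \ref{thm:Bochner}) each $\vf^{(n)}$ is a characteristic function, and the convex-combination structure of \rf{w:2} (the coefficients $1/(n+1)$ sum to one after using $\widetilde\G(1,1)=\g=1$) keeps $\vf^{(n)}(0)=1$.

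Next I would prove the quantitative estimate. Writing $a_n\equiv\|\vf^{(n)}-1\|_\alpha$, I subtract $1=\frac{1}{n+1}\sum_{j=0}^n \widetilde\G(1,1)$ from the recursion and split $\vf^{(j)}(\xi^+)\vf^{(n-j)}(\xi^-)-1$ into $(\vf^{(j)}(\xi^+)-1)\vf^{(n-j)}(\xi^-)+(\vf^{(n-j)}(\xi^-)-1)$ exactly as in the proof of Lemma \ref{lem:G:est}. Using $|\vf^{(n-j)}|\leq 1$ (Lemma \ref{lem:elem}(iii)) and the definition \rf{gamma} of $\ga$ together with $\gamma_2=\g=1$, each term $\widetilde\G(\vf^{(j)},\vf^{(n-j)})(\xi)-1$ is bounded in absolute value by $(a_j+a_{n-j})\g\,|\xi|^\alpha\cdot\frac{\g}{\g}$; more precisely $\int_{S^2}\B(\cdot)(|\xi^+|^\alpha a_j+|\xi^-|^\alpha a_{n-j})\dn\le\ga(a_j+a_{n-j})|\xi|^\alpha$ — wait, I must be careful that this mixes $|\xi^+|^\alpha$ with $a_j$ only, which gives the cleaner bound $\le \ga\max(a_j,a_{n-j})|\xi|^\alpha$ after symmetrizing; dividing by $|\xi|^\alpha$ and taking the supremum yields $a_{n+1}\le\frac{1}{n+1}\sum_{j=0}^n(a_j+a_{n-j})\ga = \frac{2\ga}{n+1}\sum_{j=0}^n a_j$ (using $\gamma_2=1$ so that the two subtracted $1$'s cancel correctly). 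An induction then gives $a_n\le a_0(2\ga+1)^n$ or, more sharply, $a_n\le a_0 C^n$ for a suitable $C$; in fact, setting $S_n=\sum_{j=0}^n a_j$ one gets $S_{n+1}\le S_n(1+\tfrac{2\ga}{n+1})$, hence $a_n$ grows slower than any geometric rate with ratio $>1$, and certainly $a_n\le a_0\prod_{k=1}^n(1+2\ga/k)=O(n^{2\ga})a_0$, which is more than enough.

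Finally I would assemble the convergence. Since $\sum_{n\ge0}e^{-t}(1-e^{-t})^n a_n$ converges — the polynomial (or at worst sub-geometric) growth of $a_n$ is dominated by the geometric decay of $(1-e^{-t})^n$ once we note $1-e^{-t}<1$, with uniform control on every compact $t$-interval and in fact on all of $[0,\infty)$ after an elementary estimate $\sup_{t\ge0}e^{-t}(1-e^{-t})^n\le C/n$ — the series \rf{w:1} converges in $\PDa$ for each $t\ge0$, the partial sums are characteristic functions with value $1$ at $\xi=0$, and the limit $\vf(\cdot,t)$ is therefore in $\PDa$. Continuity in $t$ with values in $\PDa$ follows from the locally uniform (in $t$) convergence of the series, each term being continuous in $t$. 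That the sum solves \rf{Feq}--\rf{Fini} is the classical Wild computation (differentiate term by term, legitimate by the uniform bounds, and use the recursion \rf{w:2}), and agreement with the fixed-point solution of Theorem \ref{thm:exist:cut} follows from the uniqueness there, so $\vf\in C([0,\infty),\PDa)$. The only place demanding genuine care is the bookkeeping in the recursive estimate — getting the cancellation of the ``$1$''s right via $\gamma_2=\g=1$ and organizing the convolution sum so that the growth rate of $a_n$ stays controlled — so that is where I would spend the effort.
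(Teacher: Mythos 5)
Your proposal is correct and rests on the same two pillars as the paper's proof: the bilinear estimate $\|\widetilde\G(\vf,\wvf)-1\|_\alpha\leq \gamma_\alpha^+\|\vf-1\|_\alpha+\gamma_\alpha^-\|\wvf-1\|_\alpha$ (the paper's \rf{WG:est}, of which your bound $\ga(a_j+a_{n-j})$ is a harmless weakening by a factor of $2$) and a recursive control of $a_n=\|\vf^{(n)}-1\|_\alpha$. Where you genuinely diverge is in how you solve the recursion. The paper proves $a_n\leq\ga^n a_0$ by induction, which forces the smallness condition $(1-e^{-T})\ga<1$ and then a restart-at-time-$T$ argument to cover $[0,\infty)$. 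You instead track the partial sums $S_n=\sum_{j\le n}a_j$, obtain $S_{n+1}\leq S_n(1+\tfrac{2\ga}{n+1})$, and conclude $a_n=O(n^{2\ga})a_0$ — a polynomial rather than geometric bound — so the Wild series converges for every $t\geq 0$ in one stroke, with no restarting. This is a cleaner and quantitatively sharper finish, and it sidesteps the (unstated) semigroup property of the Wild sum that the paper's restart implicitly uses. Two small caveats: your claim of uniform convergence on all of $[0,\infty)$ via $\sup_{t\ge 0}e^{-t}(1-e^{-t})^n\le C/n$ does not follow (that bound times $n^{2\ga}$ is not summable), but it is also not needed, since membership in $C([0,\infty),\PDa)$ only requires locally uniform convergence; and your justification that each $\vf^{(n)}$ is positive definite via "products of positive definite functions" glosses over the fact that $\xi\mapsto\vf(\xi^+)\wvf(\xi^-)$ is not literally such a product — but you correctly defer to \cite[Lemma 2.1]{PT96}, exactly as the paper does in Lemma \ref{lem:G:est}.
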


\begin{proof}
By inspection of the proof of 
Lemma \ref{lem:G:est} with $\wvf\equiv 1$, we immediately obtain the inequality
\begin{equation}\label{WG:est}
\|\widetilde\G(\vf,\wvf)-1\|_\alpha\leq \gamma_\alpha^+\|\vf-1\|_\alpha
+ \gamma_\alpha^-\|\wvf-1\|_\alpha
\end{equation}
for all $\vf,\wvf\in\PDa$, where 
$
\gamma_\alpha^\pm \equiv
\int_{S^2} 
\B\big((\xi\cdot \sigma)/|\xi|\big) |\xi^{\pm}|^\alpha/|\xi|^\alpha \dn
$
are finite, independent of $\xi$, and satisfy $\gamma_\alpha^++\gamma_\alpha^-
=\ga$.

Now, we proceed by induction to show
 the estimate
\begin{equation}\label{vf:n}
\|\vf^{(n)}-1\|_\alpha\leq \gamma_\alpha^n\|\vf_0-1\|_\alpha
\quad \mbox{for every} \quad n\in \mathbb{N},
\end{equation}
where $\vf^{(n)}$ and defined in \rf{w:2} and the constant $\ga\geq \g=1$ 
appers in \rf{gamma}. 
Inequality \rf{vf:n} reduces to an obvious equality if $n=0$. 
For $n\geq 1$, using definition \rf{w:2}, the 
estimate of the bilinear form \rf{WG:est}, 
and the inductive argument, we obtain
\begin{equation}\label{vf:n:2}
\begin{split}
\|\vf^{(n+1)}-1\|_\alpha&\leq 
\frac{1}{n+1}\sum_{j=0}^n \|\widetilde \G\left(\vf^{(j)},\vf^{(n-j)}\right)-
1\|_\alpha\\
&\leq 
\frac{1}{n+1}\sum_{j=0}^n \gamma_\alpha^+ \|\vf^{(j)}-1\|_\alpha
+\gamma_\alpha^-\|\vf^{(n-j)}-1\|_\alpha\\
&=\frac{1}{n+1}\sum_{j=0}^n \ga \|\vf^{(j)}-1\|_\alpha\\
&\leq  \|\vf_0-1\|_\alpha  \left(\frac{1}{n+1}\sum_{j=0}^n \gamma_\alpha^{1+j} 
\right).
\end{split}
\end{equation}
Recall now that $\ga=\gamma_\alpha^++\gamma_\alpha^-\geq \gamma_2\;(=1)$ 
by Lemma \ref{lem:gamma}, hence, 
$\gamma_\alpha^{1+j}\leq \gamma_\alpha^{n+1}$ for each $j\in \{0, ...,n\}$.
Using these inequalities to estimate the right-hand side of 
\rf{vf:n:2} we complete the proof of \rf{vf:n}.

Coming back to the function $\vf(\xi,t)$ given by the series \rf{w:1}
and applying \rf{vf:n}, we obtain
\begin{equation*}
\|\vf(t)-1\|_\alpha\leq e^{-t} \|\vf_0-1\|_\alpha 
\sum_{n=0}^\infty (1-e^{-t})^n \gamma_\alpha^n. 
\end{equation*}
Chosing $T>0$ so small to have $(1-e^{-T})\ga<1$ we obtain the convergence of
the series on the right-hand side for any $t\in [0, T]$.

However, since $T$ from the proof of Theorem \ref{thm:wild} is independent of 
the initial condition, we may  choose $\vf(\xi,T)$ as the initial datum
and show the convergence of the corresponding Wild series on  $[T,2T]$. 
Consequently, repeating this procedure, 
we show the converges of series \rf{w:1}--\rf{w:2}  toward a  solution from 
$C([0,T],\PDa)$ for any $T>0$.
\end{proof}


\subsection{Stability and uniqueness of solutions}

For each $R\in (0, \infty]$, we define the quasi-metric for any 
$\vf,\wvf\in \PDa$ by the following formula
\begin{equation}
\|\vf-\wvf\|_{\alpha,R}\equiv \sup_{|\xi|\leq R}
\frac{|\vf(\xi)-\wvf(\xi)|}{|\xi|^\alpha}.
\end{equation}

The following stability lemma, shown here in the case of the integrable 
collision kernel, will be generalized in Section \ref{sec:non-cut} 
(see Corollary \ref{cor:stab:non-cut})
for
solutions of the initial value problem \rf{Feq}-\rf{Fini} with any nonintegrable
kernel satisfying \rf{non-cut}.

\begin{lemma} \label{lem:stab}
Assume that  $\alpha \in [0,2]$ and $\B\in L^1(-1,1)$.
Consider two solutions
$\vf,\wvf \in C ([0,\infty), \PDa)$ of  problem
\rf{Feq}--\rf{Fini} corresponding to the initial data $\vf_0,\wvf_0\in
\PDa$, respectively.
Then for every $t\geq 0$ and $R\in (0,\infty]$ 
\begin{equation}\label{stab:in}
\|\vf(t)-\wvf(t)\|_{\alpha,R}
\leq e^{\lambda_\alpha t} \|\vf_0-\wvf_0\|_{\alpha,R},
\end{equation}
where the constant $\lambda_\alpha=\ga-\g\geq 0$ is defined in \rf{la0}.
\end{lemma}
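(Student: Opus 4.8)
The plan is to derive a Gr\"onwall-type differential inequality for the function $t\mapsto \|\vf(t)-\wvf(t)\|_{\alpha,R}$ directly from the integral (Duhamel) formulation \rf{duhamel} satisfied by both $\vf$ and $\wvf$. First I would subtract the two Duhamel identities to obtain, for each fixed $\xi\in\RR\setminus\{0\}$ with $|\xi|\leq R$,
\begin{equation*}
\vf(\xi,t)-\wvf(\xi,t)=\big(\vf_0(\xi)-\wvf_0(\xi)\big)e^{-\g t}
+\int_0^t e^{-\g(t-\tau)}\big(\G(\vf(\cdot,\tau))(\xi)-\G(\wvf(\cdot,\tau))(\xi)\big)\,d\tau.
\end{equation*}
The key point is then to estimate the difference $\G(\vf)(\xi)-\G(\wvf)(\xi)$ more carefully than in Lemma~\ref{lem:G:est}: rather than bounding both factors $\vf(\xi^-)$ and $\wvf(\xi^+)$ by $1$, I would split
$$
\vf(\xi^+)\vf(\xi^-)-\wvf(\xi^+)\wvf(\xi^-)
=\big(\vf(\xi^+)-\wvf(\xi^+)\big)\vf(\xi^-)+\wvf(\xi^+)\big(\vf(\xi^-)-\wvf(\xi^-)\big),
$$
bound $|\vf(\xi^-)|\le 1$, $|\wvf(\xi^+)|\le 1$, and — crucially — estimate $|\vf(\xi^\pm)-\wvf(\xi^\pm)|\le |\xi^\pm|^\alpha\,\|\vf(\tau)-\wvf(\tau)\|_{\alpha,R}$. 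This last step requires that $|\xi^\pm|\le R$ whenever $|\xi|\le R$, which holds because $|\xi^\pm|^2\le|\xi^+|^2+|\xi^-|^2=|\xi|^2$ by \rf{xipm1}; this is exactly why the localized quasi-metric $\|\cdot\|_{\alpha,R}$ is stable under the collision operator and why the argument works for every $R\in(0,\infty]$ simultaneously.

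Carrying the integration over $S^2$ through and recalling the definition \rf{gamma} of $\ga$, I would obtain
$$
|\G(\vf(\cdot,\tau))(\xi)-\G(\wvf(\cdot,\tau))(\xi)|\le \ga\,\|\vf(\tau)-\wvf(\tau)\|_{\alpha,R}\,|\xi|^\alpha .
$$
Plugging this into the subtracted Duhamel identity, dividing by $|\xi|^\alpha$, and taking the supremum over $|\xi|\le R$ gives
$$
\|\vf(t)-\wvf(t)\|_{\alpha,R}\le e^{-\g t}\|\vf_0-\wvf_0\|_{\alpha,R}
+\ga\int_0^t e^{-\g(t-\tau)}\|\vf(\tau)-\wvf(\tau)\|_{\alpha,R}\,d\tau .
$$
Writing $u(t)\equiv e^{\g t}\|\vf(t)-\wvf(t)\|_{\alpha,R}$, this becomes $u(t)\le u(0)+\ga\int_0^t u(\tau)\,d\tau$, and Gr\"onwall's lemma yields $u(t)\le u(0)e^{\ga t}$, i.e.
$$
\|\vf(t)-\wvf(t)\|_{\alpha,R}\le e^{(\ga-\g)t}\|\vf_0-\wvf_0\|_{\alpha,R}=e^{\lambda_\alpha t}\|\vf_0-\wvf_0\|_{\alpha,R},
$$
using the identity $\lambda_\alpha=\ga-\g$ recorded after Corollary~\ref{cor:la}. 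The nonnegativity $\lambda_\alpha\ge0$ follows from \rf{a>b} in Lemma~\ref{lem:gamma}.

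The only genuine subtlety — and the step I would treat most carefully — is the localization argument: one must be sure that all the quantities $\|\cdot\|_{\alpha,R}$ appearing in the estimate are genuinely the \emph{$R$-truncated} ones, which hinges on the geometric fact $|\xi^\pm|\le|\xi|$. Everything else is a routine Duhamel/Gr\"onwall computation. (The case $R=\infty$ then recovers the global stability inequality \rf{stab:in:0} of Theorem~\ref{thm:stab} under the cut-off assumption; the non-cut-off case is handled later by approximation.)
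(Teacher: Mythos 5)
Your proposal is correct and follows essentially the same route as the paper: the paper works with the differential form $|\partial_t(e^{\g t}h)|\le \ga e^{\g t}\|\vf(t)-\wvf(t)\|_{\alpha,R}$ for $h=(\vf-\wvf)/|\xi|^\alpha$ and then integrates, which is exactly the Duhamel identity you start from, and it uses the same product decomposition, the same bounds $|\vf(\xi^-)|\le 1$, $|\wvf(\xi^+)|\le 1$, the same localization observation $|\xi^\pm|\le|\xi|\le R$, and the same Gronwall step. No gaps.
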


\begin{proof}
It follows from equation \rf{Feq} that the function
$$
h(\xi,t)=\frac{\vf(\xi,t)-\wvf(\xi,t)}{|\xi|^\alpha}
$$
satisfies 
\begin{equation}\label{h:eq}
\partial_t h(\xi,t)= \int_{S^{2}} \B\left(\frac{\xi\cdot \sigma}{|\xi|}\right)
\left(
\frac{\vf(\xi^+)\vf(\xi^-)-\wvf(\xi^+)\wvf(\xi^-)}{|\xi|^\alpha} -h(\xi,t)
\right)\dn.
\end{equation}
Now, for $|\xi^+|\leq |\xi|\leq R$ and $|\xi^-|\leq |\xi|\leq R$, 
we use the following inequalities
\begin{eqnarray*}
&&\hspace{-1cm}|\vf(\xi^+)\vf(\xi^-)-\wvf(\xi^+)\wvf(\xi^-)|\\
&&\leq|\vf(\xi^+)-\wvf(\xi^+)||\vf(\xi^-)|+
|\vf(\xi^-)-\psi(\xi^-)||\wvf(\xi^+)|\\
&& \leq \|\vf(t)-\wvf(t)\|_{\alpha,R}(|\xi^+|^\alpha +|\xi^-|^\alpha)
\end{eqnarray*}
to deduce from equation \rf{h:eq} that
the function $h=h(\xi,t)$ satisfies
\begin{equation}
\left|\partial_th(\xi,t)+\g h(\xi,t)\right|\leq \ga \|\vf(t)-
\wvf(t)\|_{\alpha,R} 
\label{h1}
\end{equation}
with the constants $\ga$ and $\g$ defined in \rf{gamma}.
It follows from inequality \rf{h1} 
$$
\left|\partial_t \left(e^{t\g}h(\xi,t)\right)\right|\leq 
\ga e^{t\g} \|\vf(t)-\wvf(t)\|_{\alpha,R}
$$
for every $t>0$,
hence,
\begin{equation*}
\begin{split}
\left|e^{t\g}h(\xi,t)\right|&
\leq |h(\xi,0)|+ \int_0^t |\partial_s   \left(e^{s\g}h(\xi,s)\right) |\,ds \\
&\leq |h(\xi,0)|+ \ga \int_0^t e^{s\g}
 \|\vf(s)-\wvf(s)\|_{\alpha,R}\,ds.
\end{split}
\end{equation*}
Finally, we compute the supremum with respect to $|\xi|\leq R$ and we apply the
Gronwall lemma to obtain
$$
 \|\vf(t)-\wvf(t)\|_{\alpha,R}
\leq  \|\vf_0-\wvf_0\|_{\alpha,R} e^{t(\ga-\g)}.
$$
The proof is complete because $\ga-\g=\lambda_\alpha$ by \rf{la}.
\end{proof}


\section{Nonintegrable collision kernels -- 
existence, uni\-que\-ness, and stability of solutions}\label{sec:non-cut}

In this section, we complete the proofs of Theorems \ref{thm:exist}
and \ref{thm:stab}
on  the existence and the stability of 
solutions to \rf{Feq}-\rf{Fini} with no cut-off assumption imposed on the 
collision kernel. More precisely, we assume that $\B$ satisfies 
\rf{non-cut} for some $\alpha_0\in [0,2]$.

As a standard practice, 
we consider the increasing sequence of bounded collision kernels
\begin{equation}\label{Bn}
\B_n(s)\equiv\min \{\B(s), n\}\leq \B(s), \quad n\in \N,
\end{equation}
and, for each $\alpha\in [\alpha_0,2]$, 
 the  sequence 
$\vf_n\in C([0,\infty), \PDa)$ 
 of the corresponding
solutions to problem \rf{Feq}--\rf{Fini} (constructed in Theorem 
\ref{thm:exist:cut})
with the kernels $\B_n$ 
and with the same initial datum $\vf_0\in\PDa$.
Note that, under the assumption \rf{non-cut} ({\it cf.} Corollary 
\ref{cor:la}), we have
\begin{equation}\label{la:n}
\lambda_{\alpha,n}\equiv
 \int_{S^{2}} \B_n\left(\frac{\xi\cdot \sigma}{|\xi|}\right) 
\left( \frac{|\xi^-|^\alpha +|\xi^+|^\alpha}{|\xi|^\alpha }-1\right)\dn
\leq \lambda_\alpha
\end{equation}
for every $n\in \N$,
hence by the stability lemma \ref{lem:stab} with $R=\infty$, it follow
\begin{equation}\label{stab:n}
\|\vf_n(t)-1\|_\alpha\leq e^{\lambda_{\alpha,n} t}\|\vf_0-1\|_\alpha
\leq e^{\lambda_\alpha t}\|\vf_0-1\|_\alpha
\end{equation}
for all $t\geq 0$.

\begin{lemma}\label{lem:AA}
Assume that the collision kernel satisfies \rf{non-cut}
with some $\alpha_0\in [0,2]$.
Let 
$\alpha\in [\alpha_0,2]$. 
The sequence of  solutions $\{\vf_n\}_{n=1}^\infty\subset 
C([0,\infty),\PDa)$
is bounded in  $C(\RR\times [0,\infty))$ and equicontinuous.
\end{lemma}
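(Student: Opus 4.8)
The statement has two parts: uniform boundedness of $\{\vf_n\}$ in $C(\RR\times[0,\infty))$ and equicontinuity. The boundedness part is essentially free: each $\vf_n(\xi,t)$ is a characteristic function, so $|\vf_n(\xi,t)|\le \vf_n(0,t)=1$ for all $\xi$ and $t$ by Lemma~\ref{lem:elem}(iii); hence $\|\vf_n\|_{C(\RR\times[0,\infty))}\le 1$ uniformly in $n$. So the work is in the equicontinuity, and I will split that into equicontinuity in $\xi$ (locally uniformly in $t$) and equicontinuity in $t$ (uniformly in $\xi$ on bounded sets), which together give joint equicontinuity.

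For equicontinuity in the space variable, I would use inequality~\rf{ineq:1} applied to the positive definite function $\vf_n(\cdot,t)$ (which has $\vf_n(0,t)=1$): for any $\xi,\eta$,
\begin{equation*}
|\vf_n(\xi,t)-\vf_n(\eta,t)|^2\le 2\big(1-\Re\vf_n(\xi-\eta,t)\big)\le 2|\vf_n(\xi-\eta,t)-1|\le 2\|\vf_n(t)-1\|_\alpha|\xi-\eta|^\alpha.
\end{equation*}
By the stability estimate~\rf{stab:n}, $\|\vf_n(t)-1\|_\alpha\le e^{\lambda_\alpha t}\|\vf_0-1\|_\alpha$, which on any compact time interval $[0,T]$ is bounded by a constant independent of $n$. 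Thus $|\vf_n(\xi,t)-\vf_n(\eta,t)|\le C_T|\xi-\eta|^{\alpha/2}$ for all $n$ and all $t\in[0,T]$, giving a uniform modulus of continuity in $\xi$ (with a small subtlety: when $\alpha=0$ this argument degenerates, but then one can fall back on the fact that each $\vf_n$ is continuous and periodic-type estimates, or simply note the theorem is applied with $\alpha_0\in(0,2)$ in the relevant corollary; in any case for $\alpha>0$ the Hölder bound suffices).

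For equicontinuity in time, I would go back to the equation~\rf{Feq} satisfied by $\vf_n$ with kernel $\B_n$, and estimate $\partial_t\vf_n$. Writing the right-hand side as $\int_{S^2}\B_n\big((\xi\cdot\sigma)/|\xi|\big)\big(\vf_n(\xi^+)\vf_n(\xi^-)-\vf_n(\xi)\big)\dn$ (using $\vf_n(0,t)=1$), the integrand is controlled by Lemma~\ref{lem:in:c}: $|\vf_n(\xi^+)\vf_n(\xi^-)-\vf_n(\xi)|\le 4|\xi^+|^{\alpha/2}|\xi^-|^{\alpha/2}\|\vf_n(t)-1\|_\alpha$. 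Dividing by $|\xi|^\alpha$ inside the integral and using $|\xi^+|^{\alpha/2}|\xi^-|^{\alpha/2}/|\xi|^\alpha=\big((1-s^2)/4\big)^{\alpha/2}$ with $s=(\xi\cdot\sigma)/|\xi|$, one bounds $|\partial_t\vf_n(\xi,t)|\le 4\|\vf_n(t)-1\|_\alpha\,|\xi|^\alpha\int_{S^2}\B_n\big((\xi\cdot\sigma)/|\xi|\big)\big((1-s^2)/4\big)^{\alpha/2}\dn$. By the change-of-variables identity~\rf{g} this last integral equals $2\pi\int_{-1}^1\B(s)\big((1-s^2)/4\big)^{\alpha/2}\,ds$ (after using $\B_n\le\B$ and $\big((1-s^2)/4\big)^{\alpha/2}\le C\big((1-s^2)/4\big)^{\alpha_0/4}$ on $[-1,1]$), which is finite by assumption~\rf{non-cut} and, crucially, independent of $n$. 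Combined with~\rf{stab:n}, this yields $|\partial_t\vf_n(\xi,t)|\le C_T(1+|\xi|^\alpha)\le C_{T,\rho}$ for $|\xi|\le\rho$ and $t\in[0,T]$, uniformly in $n$; integrating gives the uniform modulus of continuity in $t$ on bounded cylinders.

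\textbf{Main obstacle.} The only genuinely delicate point is handling the nonintegrable singularity of the collision kernel uniformly in $n$ when estimating $\partial_t\vf_n$: one must see that the gain factor $(1-s^2)^{\alpha/2}$ coming from Lemma~\ref{lem:in:c} (via the positive-definiteness estimate~\rf{ineq:2}) exactly tames the singularity under hypothesis~\rf{non-cut}, and that the resulting bound involves $\int_{-1}^1(1-s^2)^{\alpha/2}\B(s)\,ds$ rather than $\int_{-1}^1\B_n(s)\,ds$ (which blows up). Everything else — combining the two moduli of continuity into joint equicontinuity on compact cylinders $K\times[0,T]$, and the uniform bound — is routine.
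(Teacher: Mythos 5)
Your proposal is correct and follows essentially the same route as the paper: uniform boundedness from $|\vf_n|\le\vf_n(0,\cdot)=1$, spatial equicontinuity from inequality \rf{ineq:1} together with \rf{stab:n}, and temporal equicontinuity by bounding $\partial_t\vf_n$ via Lemma \ref{lem:in:c}, the identity \rf{g}, $\B_n\le\B$, and hypothesis \rf{non-cut}, which is exactly the paper's $\beta_\alpha$ computation. The only blemish is a harmless exponent slip: $|\xi^+|^{\alpha/2}|\xi^-|^{\alpha/2}/|\xi|^\alpha=\bigl((1-s^2)/4\bigr)^{\alpha/4}$, not $\bigl((1-s^2)/4\bigr)^{\alpha/2}$, but since $\alpha/4\ge\alpha_0/4$ the integrability conclusion is unaffected.
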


\begin{proof} {\it Step 1. Uniform bound.}
Since $\vf_n(\cdot,t)$ is  a characteristic function
 for every $t\geq 0$, by \rf{vf:2},
we have $|\vf_n(\xi,t)|\leq \vf_n(0,t)=1$ for all $\xi \in \RR$ and $t\geq 0$.

{\it Step 2. Modulus of continuity in time.} 
We use the equation satisfied by $\vf_n$ and inequalities \rf{vf:alpha} 
and \rf{stab:n} 
as follows (remember that $\vf_n(0,t)=1$)
\begin{equation*}
\begin{split}
|\partial_t \vf_n(\xi,t)|&\leq 
\int_{S^2} \B_n
\left(\frac{\xi\cdot \sigma}{|\xi|}\right)
\big|\vf_n(\xi^+,t)\vf_n(\xi^-,t)-\vf_n(\xi,t)\vf_n(0,t)\big|\dn\\
&\leq 4 \|\vf_n(t)-1\|_\alpha \int_{S^2} \B_n
\left(\frac{\xi\cdot \sigma}{|\xi|}\right) |\xi^+|^{\alpha/2}|\xi^-|^{\alpha/2}
\dn\\
&\leq 4\beta_\alpha e^{\lambda_\alpha t} \|\vf_0-1\|_\alpha|\xi|^\alpha
\end{split}
\end{equation*}
for all $\xi\in\RR$ and $t\geq 0$.
Here, 
$\beta_\alpha$ denotes the  finite and independent of $\xi$ number which,
by 
 identities \rf{xipm:alpha} and by the change of variables
from \rf{g}, satisfies
 \begin{equation*}
\begin{split}
\beta_\alpha&\equiv
\int_{S^2} \B
\left(\frac{\xi\cdot \sigma}{|\xi|}\right) 
\frac{|\xi^+|^{\alpha/2}|\xi^-|^{\alpha/2}}{|\xi|^\alpha}\dn\\
&=
\int_{S^2} \B
\left(\frac{\xi\cdot \sigma}{|\xi|}\right)
\left(\frac{1+\frac{\xi}{|\xi|}\cdot \sigma}{2}\right)^{\alpha/4}
\left(\frac{1-\frac{\xi}{|\xi|}\cdot \sigma}{2}\right)^{\alpha/4}\dn\\
&=2\pi \int_{-1}^1 \B(s) \left(\frac{1+s}{2}\right)^{\alpha/4}
\left(\frac{1-s}{2}\right)^{\alpha/4}\,ds.
\end{split}
\end{equation*}
Since $\alpha\in [\alpha_0,2]$, it is now clear that the constant $\beta_\alpha$
is finite for any collision kernel $\B$ satisfying \rf{non-cut}.

{\it Step 3. Modulus of continuity in space.}
It suffices to apply inequality  \rf{ineq:1} 
combined with \rf{ReIm} and \rf{stab:n}  
to obtain the estimate
\begin{equation*}
\begin{split}
|\vf_n(\xi,t)-\vf_n(\eta,t)|&\leq
\sqrt{2(1-\Re \vf_n(\xi-\eta,t))}\\
&\leq \sqrt2 |\xi-\eta|^{\alpha/2}\|\vf_n(t)-1\|_\alpha^{1/2}\\
&\leq \sqrt2 |\xi-\eta|^{\alpha/2}e^{\lambda_\alpha t/2 } \|\vf_0-
1\|_\alpha^{1/2},
\end{split}
\end{equation*}
for all $t\geq 0$, 
where the right-hand side is independent of $n$.
\end{proof}

Now, we are in a position to construct a solution to \rf{Feq}-\rf{Fini}.
By Lemma \ref{lem:AA}, the Ascoli-Arzel\`a theorem, and the Cantor
diagonal argument,  we deduce that 
there exists a subsequence of solutions $\{\vf_{n_k}\}_{n_k}$ 
converging uniformly in every compact set of $\RR\times [0,\infty)$.
We are going prove that the function 
\begin{equation}\label{lim}
\vf(\xi,t)=\lim_{n_k\to \infty}\vf_{n_k}(\xi,t)
\end{equation}
is a solution of
problem \rf{Feq}--\rf{Fini} with the singular kernel $\B$
satisfying \rf{non-cut}. Note here that $\vf(\cdot,t)$ is a characteristic
function for every $t\geq 0$ as the pointwise limit of characteristic 
functions.

Here, we are allowed to use the Lebesgue dominated convergence 
 theorem to pass to 
the limit $n_k\to\infty$ in the Boltzmann operator
\begin{equation}\label{Bnk}
\int_{S^2} \B_{n_k}
\left(\frac{\xi\cdot \sigma}{|\xi|}\right)
\big(\vf_{n_k}(\xi^+,t)\vf_{n_k}(\xi^-,t)
-\vf_{n_k}(\xi,t)\vf_{n_k}(0,t)\big)\dn
\end{equation}
(where, as usual $ \vf_{n_k}(0,t)=1$) because, following the calculations from Step 2 of the proof 
of Lemma \ref{lem:AA}, its integrand can be majorized by the integrable 
(on $S^2$)
function 
$$
4 e^{\lambda_\alpha t} \|\vf_0-1\|_\alpha
\B
\left(\frac{\xi\cdot \sigma}{|\xi|}\right) 
|\xi^+|^{\alpha/2}|\xi^-|^{\alpha/2}.
$$
Since, the Boltzmann operator form \rf{Bnk} converges uniformly on every
compact subset of $\RR\times [0,\infty)$, there exists a continuous 
function $\zeta=\zeta(\xi,t)$ such that $\partial_t\vf_{n_k}\to \zeta$.
By the limit relation \rf{lim}, we immediately conclude that $\zeta=\partial_t 
\vf$.
Hence, $\vf$ is a solution to the initial-value problem \rf{Feq}-\rf{Fini}.

To show the $\vf(\cdot,t)\in \PDa$, it suffices to pass to the pointwise 
limit 
$n_k\to \infty$ in inequality \rf{stab:n} written in the following equivalent
way
$$
\frac{|\vf_{n_k}(\xi,t)-1|}{|\xi|^\alpha}\leq 
e^{\lambda_\alpha t}\|\vf_0-1\|_\alpha
$$
for all  $\xi\in\RR\setminus \{0\}$ and $t\geq 0$.

In order to prove stability inequality form Theorem \ref{thm:stab}, it suffices
to consider two sequences of solutions $\{\vf_{n}\}_{n\in\N}$ and 
$\{\wvf_{n}\}_{n\in\N}$
to equation \rf{Feq} with the truncated 
kernel $\B_n$ and 
corresponding to the initial conditions $\vf_0$ and $\wvf_0$, respectively.
By the compactness argument form Lemma \ref{lem:AA}, there exists a subsequence
$n_k\to \infty$ and solutions $\vf$, $\wvf$ to equation \rf{Feq}   such that 
$$ 
\vf(\xi,t)=\lim_{n_k\to \infty}\vf_{n_k}(\xi,t) \quad\mbox{and}\quad
\wvf(\xi,t)=\lim_{n_k\to \infty}\wvf_{n_k}(\xi,t).
$$
Using the stability lemma \ref{lem:stab} and estimate \rf{la:n}, we obtain
$$
\frac{|\vf_{n_k}(\xi,t)-\wvf_{n_k}(\xi,t)|}{|\xi|^\alpha}\leq 
e^{\lambda_\alpha t}\|\vf_0-\wvf_0\|_\alpha
$$
for all  $\xi\in\RR\setminus \{0\}$ and $t\geq 0$.
Passing to the limit $n_k\to \infty$, we complete the proof of stability 
inequality
\rf{stab:in:0} which, in particular, implies the uniqueness of solutions
to \rf{Feq}-\rf{Fini} in the space $C([0,\infty),\PDa)$.

An analogous argument allows us to remove the cut-off assumption from
the stability lemma \ref{lem:stab}.

\begin{corollary}\label{cor:stab:non-cut}
Assume that $\B$ satisfies the non cut-off  condition \rf{non-cut}
for some $\alpha_0\in [0,2]$.
For every $\alpha\in [\alpha_0,2]$ and $R>0$, the
stability estimates \rf{stab:in} from Lemma \ref{lem:stab} 
hold true
for solutions 
to problem \rf{Feq}-\rf{Fini} with the kernel $\B$.
\end{corollary}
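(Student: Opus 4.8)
The plan is to reproduce, essentially verbatim, the approximation argument that has just led to the global stability inequality \rf{stab:in:0}, the only extra bookkeeping being that the truncation radius $R$ must be carried through every estimate. Fix $\alpha\in[\alpha_0,2]$ and $R\in(0,\infty]$, and let $\vf,\wvf\in C([0,\infty),\PDa)$ be two solutions of \rf{Feq}--\rf{Fini} with the singular kernel $\B$, corresponding to initial data $\vf_0,\wvf_0\in\PDa$. As before, introduce the truncated kernels $\B_n$ of \rf{Bn} and let $\vf_n,\wvf_n\in C([0,\infty),\PDa)$ be the solutions furnished by Theorem \ref{thm:exist:cut} for the kernel $\B_n$ with the initial data $\vf_0$ and $\wvf_0$, respectively.

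Since $\B_n\in L^1(-1,1)$, the stability Lemma \ref{lem:stab} applies to each such pair and gives
\begin{equation*}
\|\vf_n(t)-\wvf_n(t)\|_{\alpha,R}\le e^{\lambda_{\alpha,n}t}\|\vf_0-\wvf_0\|_{\alpha,R}
\qquad\mbox{for all }t\ge0,
\end{equation*}
with $\lambda_{\alpha,n}$ as in \rf{la:n}. Writing $s=\frac{\xi}{|\xi|}\cdot\sigma$ and using \rf{xipm2}, the integrand defining $\lambda_{\alpha,n}$ equals $\B_n(s)\big((\tfrac{1+s}{2})^{\alpha/2}+(\tfrac{1-s}{2})^{\alpha/2}-1\big)$, which is nonnegative by the elementary inequality already invoked in the proof of Lemma \ref{lem:gamma}; hence $0\le\lambda_{\alpha,n}\le\lambda_\alpha$ because $\B_n\le\B$. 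Consequently the bound above may be replaced, uniformly in $n$, by $e^{\lambda_\alpha t}\|\vf_0-\wvf_0\|_{\alpha,R}$.

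Finally I would pass to the limit. By Lemma \ref{lem:AA}, the Ascoli--Arzel\`a theorem and a Cantor diagonal extraction, a subsequence $\vf_{n_k}$ converges uniformly on compact subsets of $\RR\times[0,\infty)$ to a solution of \rf{Feq}--\rf{Fini} with kernel $\B$, and one can arrange the same indices $n_k$ so that $\wvf_{n_k}$ converges to such a solution as well. By the uniqueness in $C([0,\infty),\PDa)$ established just above, these limits are precisely $\vf$ and $\wvf$. Therefore, for every fixed $\xi$ with $0<|\xi|\le R$ and every $t\ge0$,
\begin{equation*}
\frac{|\vf(\xi,t)-\wvf(\xi,t)|}{|\xi|^\alpha}
=\lim_{n_k\to\infty}\frac{|\vf_{n_k}(\xi,t)-\wvf_{n_k}(\xi,t)|}{|\xi|^\alpha}
\le e^{\lambda_\alpha t}\|\vf_0-\wvf_0\|_{\alpha,R},
\end{equation*}
and taking the supremum over $0<|\xi|\le R$ gives \rf{stab:in} for the singular kernel.

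The only genuinely delicate point is this last passage: it relies on the fact that \emph{every} solution of the singular problem in $C([0,\infty),\PDa)$ arises as a compactness limit of the truncated solutions, which is exactly the uniqueness statement proved just before; without it one would only control the particular solution constructed as such a limit. Everything else — the sign of $\lambda_{\alpha,n}$, the comparison $\lambda_{\alpha,n}\le\lambda_\alpha$, and the presence of the radius $R$, which is already built into Lemma \ref{lem:stab} — is routine.
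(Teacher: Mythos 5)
Your argument is correct and is essentially the paper's own proof: the paper disposes of the corollary with the remark that ``an analogous argument'' to the preceding approximation scheme (truncated kernels $\B_n$, Lemma \ref{lem:stab} with general $R$, the bound $\lambda_{\alpha,n}\le\lambda_\alpha$ from \rf{la:n}, and passage to the limit along the Ascoli--Arzel\`a subsequence) applies, which is exactly what you spell out. Your observation that identifying the compactness limits with the \emph{given} solutions $\vf,\wvf$ requires the uniqueness just obtained from \rf{stab:in:0} is the right reading of the logical order in the paper.
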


\section{Self-similar solutions  by Bobylev and Cercignani}

In this section, we are going to formulate
(in a way the most suitable for our applications)
 results by Bobylev and Cercignani
\cite{BC02-self} on solutions $(\mu, \VF)$ to equation
\begin{equation}\label{Feq-self2}
\mu \eta \cdot \nabla \VF(\eta) =
\int_{S^2} \B\left(\frac{\eta\cdot \sigma}{|\eta|}\right)
\big(\VF(\eta^+)\VF(\eta^-)-\VF(\eta)\VF(0)\big)\dn.
\end{equation}
Recall that, in this case,  the function $\vf(\xi,t)=\VF(\xi e^{\mu t})$ is 
the self-similar solution of equation \rf{Feq}.

Let us first compute  the scaling 
parameter $\mu$ in equations \rf{Feq-self2} for any collision kernel $\B$ 
satisfying the weaker assumption 
\begin{equation}\label{non-cut0}
(1-s)^{\alpha/2}(1+s)^{\alpha/2} \B(s)\in L^1(-1,1)
\quad \mbox{for some} \quad \alpha \in [0,2].
\end{equation}

\begin{lemma}\label{lem:lambda-alpha}
Assume that the collision kernel satisfies the assumption \rf{non-cut0} for 
some $\alpha\in [0,2]$.
Let $\VF$ be a $C^1$-solution of \rf{Feq-self2} with the following
properties
\begin{equation}\label{self:as}
\VF(\eta)=\VF(|\eta|) \quad \mbox{and}\quad 
\lim_{|\eta|\to 0} \frac{\VF(|\eta|)-1}{|\eta|^\alpha}=K
\end{equation}
 for some $K\neq 0$. Then
\begin{equation}\label{mu:self}
\mu= \frac1\alpha 
 \int_{S^{2}} \B\left(\frac{\eta\cdot \sigma}{|\eta|}\right) 
\left( \frac{|\eta^-|^\alpha +|\eta^+|^\alpha}{|\eta|^\alpha }-1\right)\dn
\end{equation}
\end{lemma}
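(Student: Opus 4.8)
The plan is to substitute the radial self-similar profile $\VF$ into equation \rf{Feq-self2}, divide by $|\eta|^\alpha$, and pass to the limit $|\eta|\to 0$ using the asymptotic relation $\VF(|\eta|)-1\sim K|\eta|^\alpha$. First I would rewrite the left-hand side: since $\VF$ is radial and $C^1$, we have $\eta\cdot\nabla\VF(\eta)=|\eta|\,\VF'(|\eta|)$; combined with \rf{self:as} and the fact that $\VF(r)-1\sim Kr^\alpha$ one expects $r\VF'(r)\sim \alpha K r^\alpha$, so that
\begin{equation*}
\frac{\mu\,\eta\cdot\nabla\VF(\eta)}{|\eta|^\alpha}\longrightarrow \mu\alpha K
\quad\mbox{as}\quad |\eta|\to 0.
\end{equation*}
Making this step rigorous for a general $C^1$ solution (rather than assuming differentiability of the asymptotic expansion) is the main technical point; I would handle it by integrating the ODE or by a mean-value/monotonicity argument along rays, or simply invoke that $\VF$ being a genuine solution of \rf{Feq-self2} forces the required regularity of $\VF'$ near the origin from the already-established control of the right-hand side.

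Next I would treat the right-hand side. Write the collision integrand as
\begin{equation*}
\VF(\eta^+)\VF(\eta^-)-\VF(\eta)
=\big(\VF(\eta^+)-1\big)+\big(\VF(\eta^-)-1\big)+\big(\VF(\eta^+)-1\big)\big(\VF(\eta^-)-1\big)-\big(\VF(\eta)-1\big),
\end{equation*}
using $\VF(0)=1$. Dividing by $|\eta|^\alpha$ and using $|\eta^\pm|\le|\eta|$ together with \rf{self:as}, each linear term $\big(\VF(\eta^\pm)-1\big)/|\eta|^\alpha$ converges to $K|\eta^\pm|^\alpha/|\eta|^\alpha$, which by \rf{xipm:alpha} equals $K\big((1\pm\frac{\eta}{|\eta|}\cdot\sigma)/2\big)^{\alpha/2}$; the quadratic cross term is $O(|\eta|^{2\alpha}/|\eta|^\alpha)=O(|\eta|^\alpha)\to 0$; and the last term tends to $K$. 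To exchange limit and integral over $S^2$ I would invoke the dominated convergence theorem, the dominating function being (a constant multiple of) $\B\big(\frac{\eta\cdot\sigma}{|\eta|}\big)\big(|\eta^-|^\alpha+|\eta^+|^\alpha+|\eta|^\alpha\big)/|\eta|^\alpha$, which is integrable on $S^2$ precisely by assumption \rf{non-cut0} (this is exactly the estimate behind Corollary \ref{cor:la}, via \rf{gB}); the bound on the linear terms uses $|\VF(\eta^\pm)-1|\le \|\VF-1\|_\alpha|\eta^\pm|^\alpha$ since $\VF\in\PDa$.

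Putting the two sides together yields
\begin{equation*}
\mu\alpha K = K\int_{S^2}\B\left(\frac{\eta\cdot\sigma}{|\eta|}\right)
\left(\left(\frac{1+\frac{\eta}{|\eta|}\cdot\sigma}{2}\right)^{\alpha/2}
+\left(\frac{1-\frac{\eta}{|\eta|}\cdot\sigma}{2}\right)^{\alpha/2}-1\right)\dn,
\end{equation*}
and since $K\neq 0$ we may cancel it and rewrite the right-hand side using \rf{xipm:alpha} to obtain formula \rf{mu:self}. I expect the only genuine obstacle to be the justification of the limit of $\eta\cdot\nabla\VF(\eta)/|\eta|^\alpha$; everything on the collision side is a routine dominated-convergence argument once \rf{non-cut0} is in force, and the cancellation of $K$ is immediate.
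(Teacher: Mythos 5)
Your overall strategy---substitute, divide by $|\eta|^\alpha$, pass to the limit $|\eta|\to 0$---is exactly the paper's, and your treatment of the left-hand side matches theirs (the paper invokes l'Hospital's rule to get $\VF'(r)/r^{\alpha-1}\to\alpha K$, with essentially the same logical subtlety you flag; it can be closed by first reading off the limit of $\VF'(r)/r^{\alpha-1}$ from the equation once the right-hand side is controlled). The genuine gap is on the collision side. You split the integrand into $(\VF(\eta^+)-1)$, $(\VF(\eta^-)-1)$, the cross term, and $-(\VF(\eta)-1)$, and propose the majorant $\B\big(\tfrac{\eta\cdot\sigma}{|\eta|}\big)\big(|\eta^-|^\alpha+|\eta^+|^\alpha+|\eta|^\alpha\big)/|\eta|^\alpha$. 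In the variable $s=\tfrac{\eta}{|\eta|}\cdot\sigma$ this is $\B(s)\big[\big(\tfrac{1+s}{2}\big)^{\alpha/2}+\big(\tfrac{1-s}{2}\big)^{\alpha/2}+1\big]$, which tends to $2\B(s)$ as $s\to 1$ and is therefore \emph{not} integrable under \rf{non-cut0} for a non-integrable kernel. Assumption \rf{non-cut0} and estimate \rf{gB} control the combination with a \emph{minus} one, $\big(\tfrac{1+s}{2}\big)^{\alpha/2}+\big(\tfrac{1-s}{2}\big)^{\alpha/2}-1$, which vanishes like $(1-s^2)^{\alpha/2}$ at $s=\pm1$; that cancellation is precisely what makes $\lambda_\alpha$ finite (Corollary \ref{cor:la}, Remark \ref{rem:comp}), and your term-by-term splitting destroys it. Concretely, the single piece $\int_{S^2}\B\,\tfrac{|\VF(\eta)-1|}{|\eta|^\alpha}\dn$ already diverges for the physical kernel, so the integral cannot be distributed over your four pieces, and your citation of \rf{gB} for the integrability of the plus-sign combination is incorrect.

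The repair is to keep the three linear contributions grouped so the endpoint cancellation survives. The paper writes the integrand as the product of $\dfrac{\VF(\eta^+)\VF(\eta^-)-\VF(\eta)}{|\eta^+|^\alpha+|\eta^-|^\alpha-|\eta|^\alpha}$ with the weight $\Big(\dfrac{|\eta^+|^\alpha+|\eta^-|^\alpha}{|\eta|^\alpha}-1\Big)$, shows (display \rf{mu:self2}) that the first ratio tends to $K$ for each $\sigma$, and then applies dominated convergence with majorant $C\,\B(s)\big[\big(\tfrac{1+s}{2}\big)^{\alpha/2}+\big(\tfrac{1-s}{2}\big)^{\alpha/2}-1\big]$, integrable by \rf{gB}; the uniform boundedness of the first ratio near $s=\pm1$ is where the $C^1$ and radial hypotheses (or inequality \rf{ineq:1}) enter. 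Your estimate of the quadratic cross term is fine, and the limit value $K\lambda_\alpha$ you compute is the correct one; it is only the separate domination of the linear terms that fails. A small additional remark: the lemma does not assume $\VF\in\PDa$, so the bound $|\VF(\eta^\pm)-1|\le C|\eta^\pm|^\alpha$ should be derived, for small $|\eta|$, from the limit in \rf{self:as} rather than from $\|\VF-1\|_\alpha$.
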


\begin{proof}
Since, $\VF$ is radially symmetric, by the Hospital rule, we obtain
$$
\lim_{|\eta|\to 0} \frac{\eta\cdot \nabla \VF(\eta)}{|\eta|^\alpha}
=
\lim_{|\eta|\to 0} \frac{\VF'(|\eta|)}{|\eta|^{\alpha-1}}
=\alpha \lim_{|\eta|\to 0} \frac{\VF(|\eta|)-1}{|\eta|^\alpha}=\alpha K.
$$
On the other hand, since $|\eta^\pm|\leq |\eta|$ and $\VF(0)=1$, by assumption \rf{self:as}, 
we get
\begin{equation}\label{mu:self2}
\begin{split}
&\frac{\VF(\eta^+)\VF(\eta^-)-\VF(\eta)\VF(0)}{|\eta^-|^\alpha +|\eta^+|^\alpha-
|\eta|^\alpha}\\
&\qquad\qquad=
\frac{\frac{\VF(\eta^+)-1}{|\eta^+|^\alpha}\VF(\eta^-)|\eta^+|^\alpha
+\frac{\VF(\eta^-)-1}{|\xi^-|^\alpha}|\eta^-|^\alpha
-\frac{\VF(\eta)-1}{|\eta|^\alpha}|\eta|^\alpha}{|\eta^-|^\alpha 
+|\eta^+|^\alpha-|\eta|^\alpha}\to K
\end{split}
\end{equation}
as $|\eta|\to 0$. 
Hence, dividing equation \rf{Feq-self2} by $|\eta|^\alpha$, 
passing to the limit $|\eta|\to 0$ by the Lebesgue dominated convergence
theorem in the integral on the right-hand side, 
and using relation \rf{mu:self2}, 
we  obtain equality~\rf{mu:self}. 
\end{proof}

According to Lemma \ref{lem:lambda-alpha} and Corollary \ref{cor:la}, 
for each $\alpha \in [0,2]$ we introduce the constant
\begin{equation}\label{mu:a}
\mu_\alpha = \frac{\lambda_\alpha}{\alpha} =\frac1\alpha
 \int_{S^{2}} \B\left(\frac{\eta\cdot \sigma}{|\eta|}\right) 
\left( \frac{|\eta^-|^\alpha +|\eta^+|^\alpha}{|\eta|^\alpha }-1\right)\dn
\end{equation}
which is finite and independent of $\eta$ 
if $\B$ satisfies assumption \rf{non-cut0}.

\begin{theorem}[Bobylev \& Cercignani \cite{BC02-self}]
\label{thm:BC:self}
Assume that the collision kernel $\B$ satisfies 
the weaker non cut-off assumption \rf{non-cut0} for some $\alpha \in (0,2)$.
For every 
$K< 0$  and for $\mu=\mu_\alpha$ defined in \rf{mu:a} there exists a radially symmetric solution
$\VF=\VF_{\alpha,K}\in\PDa$ of equation \rf{Feq-self2} 
satisfying
\begin{equation}\label{PHI:0:K}
 \lim_{|\eta|\to 0} \frac{\VF_{\alpha,K}(\eta)-1}{|\eta|^\alpha}=K.
\end{equation}
\end{theorem}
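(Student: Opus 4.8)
The plan is to sketch the Bobylev--Cercignani construction. The starting point is a reduction of \rf{Feq-self2} to a scalar problem. Since $\B$ depends on $\eta$ and $\sigma$ only through $s:=\eta\cdot\sigma/|\eta|$, the set of radial functions is invariant under the collision operator, so one seeks $\VF(\eta)=w(|\eta|^2)$; by \rf{xipm2}, $|\eta^\pm|^2=|\eta|^2(1\pm s)/2$, and with $r=|\eta|^2$ equation \rf{Feq-self2} becomes
\begin{equation*}
2\mu\, r\, w'(r)=\int_{S^2}\B(s)\Big(w\big(\tfrac{r(1+s)}{2}\big)\,w\big(\tfrac{r(1-s)}{2}\big)-w(r)\Big)\dn ,\qquad w(0)=1 .
\end{equation*}
By Lemma~\ref{lem:lambda-alpha}, any solution with $w(r)-1\sim Kr^{\alpha/2}$ as $r\to0^+$ forces $\mu=\mu_\alpha$ as in \rf{mu:a}, so I would fix $\mu=\mu_\alpha$ from the outset. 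Because we must end up with $\VF=w(|\cdot|^2)\in\mathcal K$, the natural class in which to look for $w$ is that of completely monotone functions, $w(r)=\int_0^\infty e^{-\tau r}\,d\nu(\tau)$ for a probability measure $\nu$ on $(0,\infty)$; then the inverse Fourier transform $F$ of $\VF$ is the scale mixture $\int_0^\infty M_\tau\,d\nu(\tau)$ of Maxwellians $M_\tau$ with $\widehat{M_\tau}(\eta)=e^{-\tau|\eta|^2}$, and the prescribed behaviour $w(r)-1\sim Kr^{\alpha/2}$ with $K<0$ corresponds to $\nu$ having a power tail $\nu\big((\tau,\infty)\big)\sim c\,\tau^{-\alpha/2}$ at infinity, hence infinite first moment --- which is precisely why the profile carries infinite energy (cf.\ Remark~\ref{rem:self}). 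The sign restriction $K<0$ is consistent with, and in fact forced by, $|\VF|\le1$ (Remark~\ref{rmk:K<0}).

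Next I would build $w$ as a power series $w(r)=\sum_{m\ge0}c_m r^{m\alpha/2}$. Inserting it into the scalar equation, using the substitution \rf{g} and the identities \rf{xipm:alpha}, and comparing the coefficients of $r^{m\alpha/2}$ gives: the term $m=0$ fixes the normalisation $c_0=1$; the term $m=1$ reproduces $\mu\alpha=\lambda_\alpha$ (hence $\mu=\mu_\alpha$) and leaves $c_1=K$ as a free parameter; and for $m\ge2$ one obtains the recursion
\begin{equation*}
\Big(\lambda_\alpha m-\int_{S^2}\B(s)\big((\tfrac{1+s}{2})^{m\alpha/2}+(\tfrac{1-s}{2})^{m\alpha/2}-1\big)\dn\Big)\,c_m=\sum_{\substack{j+k=m\\ j,k\ge1}}\beta_{j,k}\,c_j c_k ,
\end{equation*}
where $\beta_{j,k}:=\int_{S^2}\B(s)(\tfrac{1+s}{2})^{j\alpha/2}(\tfrac{1-s}{2})^{k\alpha/2}\dn$. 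Since $t\mapsto t^{m\alpha/2}$ is nonincreasing on $[0,1]$ for $m\ge1$, the integral subtracted from $\lambda_\alpha m$ does not exceed $\lambda_\alpha$, so the factor multiplying $c_m$ is at least $\lambda_\alpha(m-1)>0$ by Corollary~\ref{cor:la}; moreover all the integrals appearing here (for $j,k\ge1$ and in the $m$-th subtracted term) are finite under the sole hypothesis \rf{non-cut0}, being dominated by $\int_{S^2}\B(s)(1-s^2)^{\alpha/2}\dn$, so no individual cut-off assumption is needed. Thus every $c_m$ is uniquely determined, $c_m$ has sign $(-1)^m$ (the pattern expected of a completely monotone $w$), and with $b_0:=\int_{S^2}\B(s)(\tfrac{1+s}{2})^{\alpha/2}(\tfrac{1-s}{2})^{\alpha/2}\dn<\infty$ one has $\beta_{j,k}\le b_0$ and therefore $|c_m|\le\frac{b_0}{\lambda_\alpha(m-1)}\sum_{j=1}^{m-1}|c_j|\,|c_{m-j}|$.

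The remaining steps are the substantive part, where I would follow Bobylev and Cercignani. A generating-function comparison --- majorizing $|c_m|$ by the coefficients of the solution of $G(z)=|K|z+\frac{b_0}{\lambda_\alpha}G(z)^2$ --- gives $|c_m|\le C\rho^m$ for suitable constants, so the series converges for $r$ in a neighbourhood of $0$ and there defines a solution $w$ with $w(r)=1+Kr^{\alpha/2}+O(r^\alpha)$. \emph{The hard part is to prove that this $w$ extends to all of $[0,\infty)$ as a completely monotone function} --- equivalently, that the associated $F$ is a genuine probability density --- and it is here that the sign hypothesis $K<0$ is essential and that one exploits the mixture representation above together with the stability of complete monotonicity under products and rescalings; this is the core of the Bobylev--Cercignani analysis.

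Granting this, I would conclude as follows. The function $\VF:=w(|\cdot|^2)$ is radial, is a characteristic function by Bochner's theorem (Theorem~\ref{thm:Bochner}), solves \rf{Feq-self2} with $\mu=\mu_\alpha$, and satisfies \rf{PHI:0:K} by the local expansion of $w$. Finally $\VF\in\PDa$: for small $|\eta|$ one has $|\VF(\eta)-1|\le|K|\,|\eta|^\alpha+C|\eta|^{2\alpha}$, while for $|\eta|$ bounded away from $0$, $|\VF(\eta)-1|\le2$ by \rf{vf:2}, so $\|\VF-1\|_\alpha<\infty$. As throughout this paper, the argument can also be organised by first treating cut-off kernels (for which $\g$ and the subtracted integrals are each finite) and then passing to the limit along the truncations $\B_n$ of \rf{Bn}, since all the constants above depend on $\B$ only through the convergent quantity $\int_{S^2}\B(s)(1-s^2)^{\alpha/2}\dn$.
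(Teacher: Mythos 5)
Your proposal is correct and follows essentially the same route as the paper's sketch: reduce \rf{Feq-self2} to a radial scalar equation, expand in powers of $|\eta|^{\alpha}$, solve the resulting quadratic recursion (whose denominators you correctly bound below by $(m-1)\lambda_\alpha>0$), and defer the genuinely hard steps --- global convergence and the proof that the profile is a characteristic function --- to Bobylev and Cercignani, exactly as the paper does. The one substantive bookkeeping difference is that the paper's ansatz \rf{phi:series} carries the normalization $\Gamma(n\alpha/2+1)$, which is what turns a geometric bound on the coefficients $u_n$ into convergence of the series for all $x\ge 0$ (a Mittag--Leffler-type entire function), whereas your unnormalized bound $|c_m|\le C\rho^m$ only yields a finite radius of convergence; since you explicitly fold the global extension into the deferred ``hard part'', this is a difference of presentation rather than a gap.
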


\begin{proof}[Sketch of proof.]
This result was shown in
\cite[Thm.~6.2]{BC02-self}. 
Let us  sketch that proof for the reader convenience 
 and for the completeness of our exposition.

The authors of \cite{BC02-self} look for radially symmetric solutions 
of equation \rf{Feq-self2}. Hence, introducing the function 
\begin{equation}\label{vf:radial}
\phi(x) =\VF(\eta) \quad\mbox{where} \quad x=\frac{|\eta|^2}{2}
\end{equation}
and using identities \rf{xipm2} combined with the change 
of variables from \rf{g}, they reduce equation \rf{Feq-self2} 
to
\begin{equation}\label{Feq:rad}
2\mu \partial_x \phi(x)=\int_0^1 G(s)\big( \phi(sx)\phi((1-s)x)-\phi(0)
\phi(x)\big)\,ds,
\end{equation}
where 
\begin{equation}\label{GB}
G(s)=4\pi \B(1-2s)\quad\mbox{for}\quad s\in (0,1).
\end{equation}
Now, to keep our notation consistent with that 
used in \cite{BC02-self}, we have to introduce the parameter
\begin{equation}\label{aa:w}
\wa=\frac{\alpha}{2}\in (0,1].
\end{equation}
A solution to equation \rf{Feq:rad} is obtained in the form o the series
\begin{equation}\label{phi:series}
\phi(x)=\phi_{\wa,K}(x)=\sum_{n=0}^\infty 
\frac{u_n x^{n\wa}}{\Gamma (n\wa+1)}
\end{equation}
with the coefficients defined by the recurrence formula 
\begin{equation}\label{un}
u_0=1, \quad
u_1=\sqrt2 K,\quad
u_n=\frac{1}{\gamma(\wa,n)}\sum_{j=1}^{n-1}
B_\wa(j,n-j)u_ju_{n-j}\;\; \mbox{for}\; n\ge2. 
\end{equation}
Here,
\begin{align}
\gamma(\wa,n)&=n \lambda(\wa)-\lambda(n\wa)\label{g1}\\
\lambda(p)&=\int_0^1 G(s) \big(s^p+(1-s)^p-1\big)\,ds\\
B_\wa(j,\ell)&= 
\frac{\Gamma(n\wa+1)}{\Gamma(j\wa+1)\Gamma(\ell\wa+1)}
\int_0^1 G(s)s^{j\wa}(1-s)^{\ell\wa}\,ds.\label{Bjl}
\end{align}
Next, the reasoning from \cite{BC02-self} consists in showing that the 
series \rf{phi:series}-\rf{un} converges toward a solution of \rf{Feq:rad}.
The proof that $\phi(|x|^2/2)$ is a characteristic function is written 
in \cite[p.~1054]{BC02-self}.

Coming back to our original notation, we obtain a solution to \rf{Feq-self2} in 
the form of the series
\begin{equation}\label{VF:series}
\VF_{\alpha,K}(\eta)= \sum_{n=0}^\infty 
\frac{u_n 2^{-n/2} (|\eta|^\alpha)^{n}}{\Gamma (n\alpha/2+1)},
\end{equation}
where $u_n$ is defined in \rf{un}.
Obviously, this limit function belongs to $\PDa$ and satisfies relation 
\rf{PHI:0:K} by the definition of first two elements of 
the sequence $u_n$ from \rf{un}. 
\end{proof}

\begin{rmk}\label{rmk:K<0}
Bobylev and Cercignani, in their proof of Theorem \ref{thm:BC:self}, show the
convergence of the series \rf{VF:series} without any sign condition imposed
on the constant $K$ (in fact, complex $K$ is also allowed). Let us explain 
that if we limit ourselves to characteristic functions satisfying 
\rf{PHI:0:K}, then necessarily 
$K\leq 0$. Indeed, first notice that 
$K$ has to be a real number because,
by using the identity $\VF(-\eta)=\overline{\VF(\eta)}$
we have  
\begin{equation*}
K=\lim_{|\eta|\to 0} \frac{\VF(\eta)-1}{|\eta|^\alpha}
=\lim_{|\eta|\to 0} \frac{\VF(-\eta)-1}{|\eta|^\alpha}
=\lim_{|\eta|\to 0}\overline{ \frac{\VF(\eta)-1}{|\eta|^\alpha}}
=\overline{K}.
\end{equation*}
This means that, in particular, we have
\begin{equation}\label{K:Re:lim}
K=\lim_{|\eta|\to 0} \frac{\Re \VF(\eta)-1}{|\eta|^\alpha}.
\end{equation}
The right-hand side of inequality \rf{K:Re:lim} is nonpositive because of
Lemma \ref{lem:Re} and inequality~\rf{vf:2}.
\end{rmk}

\begin{rmk}
Theorem \ref{thm:BC:self} is shown under the assumption that the function from 
\rf{GB} satisfies the estimate
$0\leq G(s)\leq Cs^{-(1+\gamma)}$
for some constants $\gamma\in (0,1)$,  $C>0$ and for all $s\in (0,1)$, see 
\cite[Assump.~(A) on p.~1052]{BC02-self}.
It is clear, however, from the proof by Bobylev and Cercignani 
that their reasoning holds true provided the quantities in \rf{g1}--\rf{Bjl}
are finite. Hence, one can assume, for example, that 
$(s(1-s))^\gamma G(s) \in L^1(0,1)$. Using the formula \rf{GB} we discover
our assumption \rf{non-cut0} with $\gamma=\alpha/2$. 
\end{rmk}

\begin{rmk}\label{rem:opt}
Now, it is clear that the estimate of 
the growth of the quantity $\|\vf(t)- \wvf(t)\|_\alpha$
expressed by
inequality \rf{stab:in:0} is optimal. Indeed, this can be
easily seen when we substitute in \rf{stab:in:0}  
the self-similar solution $\vf(\xi,t)=\VF_{\alpha,K}(\xi e^{\mu_\alpha})$
and $\wvf\equiv 1$.
In this special case, since $\mu_\alpha=\lambda_\alpha/\alpha$,
we have
\begin{eqnarray*}
\|\vf(t)-1\|_\alpha=
e^{\lambda_\alpha t}\sup_{\xi\in\RR}
\frac{\left| \VF_{\alpha,K}(\xi e^{\mu_\alpha t}) -1\right|}{|\xi e^{\mu_\alpha 
t}|^\alpha} 
= e^{\lambda_\alpha t} \|\VF_{\alpha,K}-1\|_\alpha
\end{eqnarray*}
for all $t>0$.
\end{rmk}


\section{Asymptotic stability of solutions}\label{sec:asymp}

Now, we are ready to prove our main result on the large time asymptotics 
of solutions to \rf{Feq}-\rf{Fini} with a nonintegrable collision kernel
satisfying \rf{non-cut}.

\begin{proof}[Proof of Theorem \ref{thm:asymp}]
First, we apply  the stability inequality \rf{stab:in}
with some $R\in (0,\infty)$ which is now valid, by Corollary 
\ref{cor:stab:non-cut}, in the case of any kernel satisfying \rf{non-cut}.

Using relation \rf{phi=psi}
we substitute   
$\vf(\xi,t)=\psi(\xi e^{\mu_\alpha t},t)$ 
and $\wvf(\xi,t)= \wpsi(\xi e^{\mu_\alpha t},t)$ into 
inequality \rf{stab:in} to obtain
\begin{equation}\label{esas:1}
\sup_{|\xi|\leq R}\frac{|\psi(\xi e^{\mu_\alpha t},t)
-\wpsi(\xi e^{\mu_\alpha t},t)|}{|\xi|^\alpha}
\leq e^{\lambda_\alpha t}\sup_{|\xi|\leq R}\frac{|\psi_0(\xi)
-\wpsi_0(\xi)|}{|\xi|^\alpha}
\end{equation}
for all $t>0$ and each $R\in (0,\infty]$. Next, it follows from equality
$\alpha\mu_\alpha=\lambda_\alpha$ that 
\begin{equation*}
\sup_{|\xi|\leq R}\frac{|\psi(\xi e^{\mu_\alpha t},t)
-\wpsi(\xi e^{\mu_\alpha t},t)|}{|\xi|^\alpha}
= e^{\lambda_\alpha t}
\sup_{|\xi|\leq R\, e^{\mu_\alpha t}}\frac{|\psi(\xi,t)
-\wpsi(\xi,t)|}{|\xi|^\alpha},
\end{equation*}
hence, by \rf{esas:1},
\begin{equation}\label{esas:2}
\sup_{|\xi|\leq R\, e^{\mu_\alpha t}}\frac{|\psi(\xi,t)
-\wpsi(\xi,t )|}{|\xi|^\alpha}
\leq \sup_{|\xi|\leq R}\frac{|\psi_0(\xi)
-\wpsi_0(\xi)|}{|\xi|^\alpha}
\end{equation}
for all $t>0$ and each $R\in (0,\infty]$. Since $R$ is arbitrary, we are allowed
to substitute $R=S e^{-\mu_\alpha t}$ in \rf{esas:2} (when $S$ will be 
chosen later on) to obtain
 \begin{equation}\label{esas:3}
\sup_{|\xi|\leq S}\frac{|\psi(\xi,t)-\wpsi(\xi,t )|}{|\xi|^\alpha}
\leq \sup_{|\xi|\leq S \,e^{-\mu_\alpha t}}\frac{|\psi_0(\xi)
-\wpsi_0(\xi )|}{|\xi|^\alpha}
\end{equation}

Now, we are in a position to complete the proof. Recall that 
$|\psi(\xi,t)|\leq 1$ and $|\wpsi(\xi,t)|\leq 1$. 
Hence, for every $\varepsilon>0$ 
there exists $S>0$ such that 
\begin{equation}\label{esas:4}
\sup_{|\xi|> S}\frac{|\psi(\xi,t)-\wpsi(\xi,t )|}{|\xi|^\alpha} \leq 
\frac{2}{R^\alpha}\leq \varepsilon.
\end{equation}
Consequently, with this choice of $S$,  by \rf{esas:3} and \rf{esas:4}, 
we have
\begin{equation}\label{esas:5}
\begin{split}
\|\psi(t)-\wpsi(t)\|_\alpha &\leq
\sup_{|\xi|\leq S}\frac{|\psi(\xi,t)-\wpsi(\xi,t )|}{|\xi|^\alpha}+
\sup_{|\xi|> S}\frac{|\psi(\xi,t)-\wpsi(\xi,t )|}{|\xi|^\alpha}\\
&\leq  \sup_{|\xi|\leq S \,e^{-\mu_\alpha t}}\frac{|\psi_0(\xi)
-\wpsi_0(\xi )|}{|\xi|^\alpha} +\varepsilon.
\end{split}
\end{equation}
By the assumption on $\psi_0$ and $\wpsi_0$ (see \rf{psi:0}),
we 
immediately obtain that the first term on the right hand side of \rf{esas:5}
tends to zero as $t\to\infty$. Since, $\varepsilon>0$ can be arbitrary small
we complete the proof of Theorem \ref{thm:asymp}.
\end{proof}

\begin{rmk}
Corollary
\ref{cor:asymp:self} implies that solutions 
the original problem \rf{eq}-\rf{ini},
which converge 
toward self-similar profile by Bobylev and Cercignani
(in the sense stated in Corollary \ref{cor:asymp:self}), cannot have 
finite energy. 
Indeed, by the Toscani and Villani result recalled in Remark \ref{rem:Max}, finite energy solutions
to \rf{eq}--\rf{ini} have to converge in the metric  $\|\cdot\|_2$  toward Maxwellian.  
This fact is in contrast with a result by Mischler and Wennberg \cite{MW99} 
who showed that any solution
of the homogenous Boltzmann equation which satisfies certain bounds on moments
of order $\alpha<2$ must necessarily have also have bounded energy.
However, they consider the equation with so-called hard potential.
As we have explained, 
such a phenomenon cannot be true for Maxwellian molecules.
\end{rmk}



\end{document}